\documentclass[a4paper, 10pt]{amsart}

\usepackage[latin1]{inputenc}
\usepackage[shortlabels]{enumitem}
\usepackage{nameref}
\usepackage[colorlinks=true]{hyperref}
\usepackage[nospace,noadjust]{cite}
\usepackage[T1]{fontenc}
\usepackage{lmodern}
\usepackage[final, nopatch=footnote]{microtype}
\usepackage{lastpage, setspace}
\usepackage{graphicx}
\usepackage[all]{xy}
\usepackage{amsmath, amsthm, amssymb, bm, tensor, mathrsfs}
\usepackage{verbatim}
\usepackage{color}
\usepackage{tikz}
\usetikzlibrary{decorations.markings, decorations.pathreplacing}
\usetikzlibrary{decorations.pathmorphing, arrows.meta}
\usetikzlibrary{calc}
\usepackage{microtype}

\tikzset{snake it/.style={decorate, decoration=snake}}
\tikzset{snake arrow/.style=
{->,
decorate,
decoration={snake,amplitude=.4mm,segment length=2mm,post length=1mm}},
}
\tikzset{big arrow/.tip={Straight Barb[width=4mm, length=3mm]}}

\usepackage{mathtools}

\DeclarePairedDelimiter\brak{[}{]} 
\DeclarePairedDelimiter\set{\{}{\}}
\DeclarePairedDelimiter\abs{\lvert}{\rvert}

\makeatletter
\newcommand{\displaybump}{\hbox to \@totalleftmargin{\hfil}}
\makeatother

\SelectTips{cm}{}
\newdir{ >}{{}*!/-5pt/\dir{>}}
\newdir{< }{{}*!/+5pt/\dir{<}}

\setlist[enumerate]{leftmargin=1cm}  

\usepackage[disable]{todonotes}

\DeclareMathOperator\Aut{Aut}

\newcommand{\ind}[2]{\brak*{#1 : #2}} 
\DeclareMathOperator{\id}{id}

\DeclareMathOperator{\Sym}{Sym}

\newcommand{\bigast}{\mathop{\scalebox{1.5}{\raisebox{-0.2ex}{$\ast$}}}}

\DeclareMathOperator\bbN{\mathbb{N}}

\DeclareMathOperator\calC{\mathcal{C}}
\DeclareMathOperator\calL{\mathcal{L}}

\theoremstyle{definition}
\newtheorem{theorem}{Theorem}[section]
\newtheorem*{theorem*}{Theorem}
\newtheorem{lemma}[theorem]{Lemma}
\newtheorem*{lemma*}{Lemma}
\newtheorem{corollary}[theorem]{Corollary}
\newtheorem{definition}[theorem]{Definition}
\newtheorem*{definition*}{Definition}
\newtheorem{remark}[theorem]{Remark}
\newtheorem*{remark*}{Remark}
\newtheorem{proposition}[theorem]{Proposition}
\newtheorem*{proposition*}{Proposition}
\newtheorem{example}[theorem]{Example}
\newtheorem*{example*}{Example}
\newtheorem*{sketch of proof}{Sketch of Proof}
\newtheorem*{idea of proof}{Idea of Proof}
\newtheorem{question}[theorem]{Question}

\makeatletter
\newtheorem*{rep@theorem}{\rep@title}
\newcommand{\newreptheorem}[2]{%
\newenvironment{rep#1}[1]{%
 \def\rep@title{#2 \scshape \ref{##1}}
 \begin{rep@theorem}}%
 {\end{rep@theorem}}}
\makeatother

\newreptheorem{theorem}{Theorem}
\newreptheorem{proposition}{Proposition}
\newreptheorem{corollary}{Corollary}
\newreptheorem{lemma}{Lemma}
\newreptheorem{conjecture}{Conjecture}
\newreptheorem{definition}{Definition}

\title{The Scale of (\texorpdfstring{$P$}{P})-closed Groups Acting On Trees}
\author{Marcus Chijoff, Michal Ferov, and Stephan Tornier}
\date{\today}

\begin{document}

\begin{abstract}
Reid--Smith parametrised ($P$)-closed groups acting on trees using graph-based combinatorial structures known as local action diagrams. Properties of the acting (topological) group, such as being locally compact, compactly generated, discrete or simple, are reflected in its local action diagram. In this article, we describe the translations of ($P$)-closed groups and their axes in terms of local action diagrams. As applications, we determine the scale function of ($P$)-closed groups and characterise unimodular as well as uniscalar ($P$)-closed groups. The latter provides one possible answer to a question of Thomas Weigel.\end{abstract}

\maketitle

\section{Introduction}

In the general theory of totally disconnected, locally compact (t.d.l.c.) groups, groups acting on trees play an important role for theoretical and practical reasons.

On the theoretical side, due to the Cayley--Abels graph construction \cite{KM08}, every compactly generated t.d.l.c. group acts vertex-transitively on a connected regular graph $\Gamma$ of finite degree. The action of $G$ on $\Gamma$ lifts to an action on its universal cover, which is a regular tree. Note that when $G$ is discrete, compact generation amounts to finite generation and $\Gamma$ is a standard Cayley graph of $G$.

On the practical side, groups acting on trees constitute a particularly accessible class of t.d.l.c. groups which makes them a popular testing ground for conjectures about general t.d.l.c. groups and the construction of (counter)examples.

Many useful classes of groups acting on trees have been defined and parametrised. This includes Burger--Mozes' \cite{BM00} universal groups $\mathrm{U}(F)$, classifying locally transitive ($P$)-closed groups that contain an edge inversion, Smith's \cite{Smi17} universal groups $\mathrm{U}(F_{1},F_{2})$, classifying locally transitive ($P$)-closed groups preserving the bipartition of a biregular tree, Tornier's \cite{Tor23} generalised universal groups $\mathrm{U}_{k}(F)$, classifying locally transitive ($P_{k}$)-closed ($k\in\mathbb{N}$) groups that contain an involutive edge inversion, as well as Radu's \cite{Rad17} groups, classifying locally alternating or symmetric groups that act boundary-$2$-transitively on sufficiently thick trees. 

In striking recent work, Reid--Smith \cite{RS26} achieved a parametrisation of general ($P$)-closed groups acting on trees in terms of \emph{local action diagrams}, see Section~\ref{sec:local_action_diagrams}.

\begin{repdefinition}{def:local_action_diagram}[{\cite[Definition 3.1]{RS26}}] A \textbf{local action diagram} $\Delta$ is a triple $(\Gamma, (X_{a})_{a \in A\Gamma}, (G(v))_{v \in V\Gamma})$ consisting of
\begin{enumerate}[(i)]
	\item a connected graph $\Gamma=(V\Gamma,A\Gamma,o,t,r)$,
	\item pairwise disjoint, non-empty sets $X_{a}$ ($a\in A\Gamma$), and
	\item closed subgroups $G(v)\le\Sym(X_{v})$ ($v\in V\Gamma$), where $\smash{X_{v}:=\bigsqcup_{a\in o^{-1}(v)}X_{a}}$, such that the sets $X_{a}$ ($\smash{a\in o^{-1}(v)}$) are precisely the orbits of $G(v)$.
\end{enumerate}
Each $X_{a}$ is a \textbf{colour set}, its elements are \textbf{colours}, and each $G(v)$ is a \textbf{local action}.
\end{repdefinition}

Reid--Smith introduce isomorphisms for local action diagrams and establish a one-to-one correspondence of their isomorphism classes with isomorphism classes of actions $(T,G)$, where $T$ is a tree and $G\le\Aut(T)$ is ($P$)-closed. Given any group $G\le\Aut(T)$, a local action diagram arises by labelling $\Gamma:=G\backslash T$ with data coming from the group action. Conversely, for every local action diagram $\Delta=(\Gamma,(X_{a}),(G(v)))$ there is an associated tree $\mathbf{T}$, termed $\Delta$-tree, and a ($P$)-closed group $\mathbf{U}(\mathbf{T},(G(v))_v)\le\Aut(\mathbf{T})$, or $\mathbf{U}_{\mathbf{T}}(\Delta)$ in short, such that $\mathbf{U}_{\mathbf{T}}(\Delta)\backslash\mathbf{T}\cong\Gamma$.

Subsequently, they describe various properties of the group $G$ in terms of its local action diagram, including local compactness, compact generation and simplicity.

\vspace{0.2cm}
In this article, we describe the translations of a ($P$)-closed group $G$ and their translation axes in terms of \emph{translatable circuits} of the local action diagram $\Delta$ of $G$.

\vspace{0.2cm}
\noindent
\textbf{Definition \ref{def:multi_coloured_circuit}} and \textbf{\ref{def:translatable_circuit}.}\hspace{0.1cm}
Let $\Delta = (\Gamma, (X_{a}), (G(v)))$ be a local action diagram and $l \in \mathbb{N}$. A~\textbf{translatable circuit of length $l$} of $\Delta$ is a tuple $C=(a_{i}, S_{i})_{i=0}^{l-1}$ consisting of arcs $a_{i}\in A\Gamma$ satisfying $t(a_{i}) = o(a_{i+1})$ for all $i \in \set*{0,\dots, l\!-\!1}$ cyclically, and non-diagonal orbits $S_{i}\subseteq X_{\overline{a_{i-1}}}\times X_{a_{i}}$ of the diagonal action of $G(o(a_{i}))$ on $X_{\overline{a_{i-1}}} \times X_{a_{i}}$.


\vspace{0.2cm}
We introduce two equivalence relations, $\sim$ and $\sim_{s}$, on the set $C_{\Delta}$ of translatable circuits of $\Delta$. Their equivalence classes correspond to $G$-orbits of translation axes and $G$-conjugacy classes of translations of $G$ respectively.

\begin{reptheorem}{thm:axes_circuits_correspondence}
	Let $\Delta=(\Gamma,(X_{a}),(G(v)))$ be a local action diagram, $\mathbf T =(T, \pi, \mathcal{L})$ be a $\Delta$-tree and $G:=\mathbf{U}(\mathbf{T},(G(v)))$. Then there are mutually inverse bijections
    \begin{displaymath}
        \xymatrix{
        G\backslash\mathrm{Axes}(G) \ar@^{->}@<0.5ex>[r] & C_{\Delta}/\!\sim \ar@^{->}@<0.5ex>[l]
        }
        \hspace{0.3cm}\text{and}\hspace{0.3cm}
        \xymatrix{
        G\backslash\mathrm{Hyp}(G) \ar@^{->}@<0.5ex>[r] & C_{\Delta}/\!\sim_{s}. \ar@^{->}@<0.5ex>[l]
        }
    \end{displaymath}
\end{reptheorem}

We show that the scale of a translation (equivalently, of its conjugacy class) depends only on the $\sim_{s}$-equivalence class of the associated translatable circuit $C$. Corollary~\ref{cor:scale_translatable_circuit} offers an explicit formula for this scale value, say $s(C)$, in terms of the translatable circuit's data. As a consequence, we obtain the following description of the set of all scale values of $G$ in terms of translatable circuits of minimal length.

\begin{reptheorem}{thm:scale_values}
    Let $\Delta$ be a local action diagram with finite orbits, $\mathbf{T}$ be a $\Delta$-tree and $G:=\mathbf{U}_{\mathbf{T}}(\Delta)$. Furthermore, let $\{C_{i}\mid i\in I\}\subseteq C_{\Delta}$ be a collection of minimal translatable circuits representing $C_{\Delta}/\!\sim$. Then
    \begin{displaymath}
        s(G)=\{s(C)\mid C\in C_{\Delta}\}=\{s(C_{i})^{n}\mid i\in I,\ n\in\mathbb{N}\}.
    \end{displaymath}
\end{reptheorem}

With a view towards a GAP package for local action diagrams, we rephrase Bass--Kulkarni's \cite{BK90} unimodularity criterion for groups acting on trees in terms of local action diagrams. In particular, this explicitly identifies unimodularity as a locally determined global property in the sense of Reid--Smith \cite[Section~8]{RS26}. For cocompact $(P)$-closed groups, we reduce said unimodularity criterion to a finite condition and thereby enable computations, see Theorem~\ref{thm:unimodular_fundamental_cycles}.

Finally, we characterise uniscalarity of ($P$)-closed groups in terms of local action diagrams and obtain the following structural description.

\begin{reptheorem}{thm:uniscalar_structure}
    Let $\Delta=(\Gamma,(X_{a}),(G(v)))$ be a local action diagram with finite orbits, $\mathbf{T}=(T,\pi,\mathcal{L})$ be a $\Delta$-tree and $G:=\mathbf{U}_{\mathbf{T}}(\Delta)$. Then $G$ is uniscalar if and only if $G$ is either of type (\emph{Horocyclic}) or contains a compact open normal subgroup.
\end{reptheorem}

This provides one possible answer to a question of Thomas Weigel for classes of t.d.l.c. groups in which uniscalarity implies being compact-by-discrete: the class of non-horocyclic $(P)$-closed groups acting on trees with compact vertex stabilisers.

\subsection*{Acknowledgements}
All authors owe thanks to Colin Reid for the project idea and to Colin Reid and George Willis for many helpful discussions.  Financial support is acknowledged by the first and last authors through the ARC DECRA Fellowship DE210100180, and by the second author through George Willis' ARC Laureate Fellowship FL170100032.
\newpage
\section{Preliminaries}

\subsection{Permutation Groups}

Let $\Omega$ be a set. In this section we collect definitions concerning $\Sym(\Omega)$, the group of bijections from $\Omega$ to itself, called the \textbf{symmetric group} of $\Omega$. When $\Omega=\{1,2,\ldots,n\}$, the group $\Sym(\Omega)$ is also denoted by $S_{n}$.

A subgroup $G\le\Sym(\Omega)$ is a \textbf{permutation group}. For $g\in G$ and $\omega\in\Omega$ we write $g\omega$ for the image of $\omega$ under $g$. The \textbf{orbit} of $\omega$ is $G\omega:=\{g\omega\mid g\in G\}$ and the set of orbits is $G\backslash\Omega:=\{G\omega\mid \omega\in\Omega\}$. For $A\subseteq\Omega$ we define $gA:=\{ga\mid a\in A\}$ and $GA:=\{ga\mid a\in A,\ g\in G\}$. The \textbf{stabiliser} of $\omega \in \Omega$ is $G_{\omega} := \{g \in G \mid g\omega = \omega\}$. The \textbf{pointwise stabiliser} of $A \subseteq \Omega$ is $G_{A} := \{g \in G \mid \forall a\in A:\ ga = a\}$. The \textbf{setwise stabiliser} of $A$ is $G_{\{A\}}:=\{g \in G \mid \forall a\in A: ga\in A\}$. The \textbf{rigid stabiliser} of $A$ is $\mathrm{rist}_{G}(A):=G_{\Omega\backslash A}$. The set $A$ is \textbf{$G$-invariant} if $gA=A$ for all $g\in G$. In this case, the \textbf{restriction} of $G$ to $A$ yields a subgroup of $\Sym(A)$, denoted $G^{A}$, isomorphic to $G/G_{A}$. A group $G\le\Sym(\Omega)$ is \textbf{semiregular} if $G_{\omega}= 1$ for all $\omega \in\Omega$.

\subsection{Permutation Topology}

Let $X$ be a set. The left translates of pointwise stabilisers of finite subsets of $X$ form the basis of the \textbf{permutation topology} on $\Sym(X)$. This topology turns $\Sym(X)$ into a Hausdorff, totally disconnected group, see e.g. \cite{KM08} and \cite{Woe91}. For the reader's convenience, we rephrase the results of the references above in the context relevant here. 

For $n\in\bbN$ and $x=(x_{1},\ldots,x_{n}), y=(y_{1},\ldots,y_{n})\in X^{n}$, define
\begin{displaymath}
 U_{x,y}:=\{g\in \Sym(X)\mid \forall i\in\{1,\ldots,n\}:\ g(x_{i})=y_{i}\}.
\end{displaymath}
Recall that a topological space is \textbf{zero-dimensional} if it admits a basis consisting of closed open sets and that a Hausdorff zero-dimensional space is totally~disconnected.

\begin{lemma}
Let $X$ be a set. Then $\Sym(X)$ is Hausdorff and totally disconnected.
\end{lemma}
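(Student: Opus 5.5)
The plan is to show directly that $\Sym(X)$ is Hausdorff and zero-dimensional; the remark preceding the statement then gives total disconnectedness. The basic open sets are the sets $U_{x,y}$ with $x,y\in X^{n}$. These are exactly the left translates $gG_{\{x_1,\dots,x_n\}}$: if $g\in U_{x,y}$ then $U_{x,y}=g\,G_{\{x_1,\ldots,x_n\}}$, where we take the pointwise stabiliser. So the sets $U_{x,y}$, whenever nonempty, form the basis described in the definition of the permutation topology. I will take that this is a group topology as given, since the paper defers to \cite{KM08,Woe91}. Only the two separation properties are needed.

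\textbf{Hausdorff.} Let $g\neq h$ in $\Sym(X)$.
\begin{itemize}
\item Choose $x\in X$ with $g(x)\neq h(x)$.
\item Then $U_{(x),(g(x))}$ contains $g$, and $U_{(x),(h(x))}$ contains $h$.
\item These two sets are disjoint, because an element of both would send $x$ to two different points.
\end{itemize}

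\textbf{Zero-dimensional.} It suffices to show that each basic set $U_{x,y}$ is also closed.
\begin{itemize}
\item Let $g\notin U_{x,y}$. Then there is an index $i$ with $g(x_i)\neq y_i$.
\item The open neighbourhood $U_{(x_i),(g(x_i))}$ of $g$ is disjoint from $U_{x,y}$.
\item Hence the complement of $U_{x,y}$ is open.
\end{itemize}
An equivalent way to see this: the complement of $U_{x,y}$ is the union of the sets $U_{(x_i),(z)}$ over all $i$ and all $z\neq y_i$, which is a union of open sets. So $\Sym(X)$ has a basis of clopen sets.

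\textbf{Conclusion.} A Hausdorff zero-dimensional space is totally disconnected: given $g\neq h$, a clopen basic set containing $g$ but not $h$ splits any subset containing both points. Hence $\Sym(X)$ is Hausdorff and totally disconnected.

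There is no real obstacle here. The only point needing care is confirming that the $U_{x,y}$ coincide with the translates of pointwise stabilisers, so that they genuinely form the stated basis.
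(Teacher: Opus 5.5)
Your proposal is correct and follows the paper's proof essentially verbatim: you separate $g\neq h$ with the same disjoint basic sets, and you show each $U_{x,y}$ is closed by finding, for $g\notin U_{x,y}$, a basic neighbourhood of $g$ that misses $U_{x,y}$. You use $U_{(x_i),(g(x_i))}$ where the paper uses the smaller $U_{x,g(x)}$, which makes no difference, and your extra remarks (identifying the nonempty $U_{x,y}$ with left cosets of pointwise stabilisers, and spelling out why Hausdorff plus zero-dimensional gives totally disconnected) are both fine, though in the paper's notation $G_{\{A\}}$ denotes the setwise stabiliser, so the pointwise one should be written $\Sym(X)_{x_1,\ldots,x_n}$.
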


\begin{proof}
To see that $\Sym(X)$ is Hausdorff, let $g,h\in\Sym(X)$ be distinct. Then there is an element $x\in X$ such that $g(x)\neq h(x)$. Therefore, $U_{x,g(x)}$ and $U_{x,h(x)}$ are disjoint open sets containing $g$ and $h$ respectively.

For zero-dimensionality, note that the sets $U_{x,y}$ for $x,y\in X^{n}$ and $n\in\bbN$ are open by definition. Now consider $g\in\Sym(X)\backslash U_{x,y}$. Then there is $i\in\{1,\ldots,n\}$ such that $g(x_{i})\neq y_{i}$ and $U_{x,g(x)}\subseteq\Sym(X)\backslash U_{x,y}$ contains $g$. Hence the assertion.
\end{proof}

We now characterise compact sets of the permutation topology on $\Sym(X)$.

\begin{lemma}
Let $X$ be a set equipped with the discrete topology. Then the action map $\Phi:\Sym(X)\times X\to X$ given by $(g,x)\mapsto g(x)$ is continuous.
\end{lemma}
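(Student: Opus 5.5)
Since $X$ carries the discrete topology, continuity of $\Phi$ amounts to showing that for every $y\in X$ the preimage $\Phi^{-1}(\{y\})=\{(g,x)\in\Sym(X)\times X\mid g(x)=y\}$ is open in the product topology. Equivalently, I will show that every point $(g,x)$ in this preimage has a basic open neighbourhood, of the form $U\times\{x\}$ with $U$ open in $\Sym(X)$, contained in $\Phi^{-1}(\{y\})$.

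Fix $(g,x)$ with $g(x)=y$. Using the notation $U_{x,y}$ from above with $n=1$, I take the neighbourhood $U_{x,g(x)}\times\{x\}$. Here $U_{x,g(x)}$ is the left translate $gG_{x}$ of the pointwise stabiliser of the finite set $\{x\}$, hence open in the permutation topology. The singleton $\{x\}$ is open because $X$ is discrete. The set contains $(g,x)$, and for any $(h,x)$ in it we have $h(x)=g(x)=y$, so it lies in $\Phi^{-1}(\{y\})$.

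Since every preimage of a point is open, the preimage of any subset of $X$ is a union of such sets and hence open, so $\Phi$ is continuous. The only step needing care is checking that $U_{x,y}$ really is a basic open set, namely that it equals $gG_{x}$ for any $g$ with $g(x)=y$. This follows directly from the definitions, so I expect no real obstacle.
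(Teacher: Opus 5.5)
Your proof is correct and is essentially the paper's argument: the paper likewise shows that each $(g,x)\in\Phi^{-1}(Y)$ has the open neighbourhood $U_{x,g(x)}\times\{x\}$ contained in the preimage, only it treats an arbitrary $Y\subseteq X$ directly rather than singletons followed by unions. Your extra check that $U_{x,g(x)}=g\,\Sym(X)_{x}$ is a basic open set is a detail the paper leaves implicit.
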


\begin{proof}
Let $Y\subseteq X$ (be open). Then $\Phi^{-1}(Y)=\{(g,x)\in\Sym(X)\times X\mid g(x)\in Y\}$. Hence, if $(g,x)\in\Phi^{-1}(Y)$ then so is the open set $U_{x,g(x)}\times\{x\}$ containing $(g,x)$.
\end{proof}

\begin{proposition}\label{prop:permutation_topology_compact}
Let $X$ be a set and $H\le\Sym(X)$. Then $H$ is compact if and only if $H\le\Sym(X)$ is closed and all its orbits are finite.
\end{proposition}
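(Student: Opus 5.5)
The plan is to embed $\Sym(X)$ into the product space $X^{X}$ (with $X$ discrete) and read compactness off Tychonoff's theorem. The map $g\mapsto (g(x))_{x\in X}$ is injective. The subspace topology it induces on $\Sym(X)$ is exactly the permutation topology: the basic open sets $U_{x,y}$ are precisely the traces of basic cylinder sets of the product topology. So $H$ may be viewed as a subspace of $X^{X}$.

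For the forward direction, assume $H$ is compact. Since $\Sym(X)$ is Hausdorff by the first lemma, $H$ is closed in $\Sym(X)$. For each $x\in X$, the orbit map $H\to X$, $h\mapsto h(x)$, is continuous by the second lemma (restrict $\Phi$ to $H\times\{x\}$). Hence $Hx$ is a compact subset of the discrete space $X$, and therefore finite.

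For the converse, assume $H$ is closed and every orbit is finite. Then $H$ lies inside $P:=\prod_{x\in X}Hx\subseteq X^{X}$, which is compact by Tychonoff's theorem, since each factor is finite and discrete. It remains to show that $H$ is closed in $P$, or at least that the closure $\overline{H}$ of $H$ in $X^{X}$ lies in $\Sym(X)$; closedness of $H$ in $\Sym(X)$ then gives $\overline{H}=H$, so $H$ is compact.

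This last step is the main obstacle, because $\Sym(X)$ is not closed in $X^{X}$: limits of bijections need only be injections. Let $f\in\overline{H}$. Injectivity is immediate: if $f(x)=f(x')$ with $x\neq x'$, then some $h\in H$ agrees with $f$ on $\{x,x'\}$, contradicting injectivity of $h$. For surjectivity I would use finiteness of orbits. Given $y\in X$, some $h\in H$ satisfies $h^{-1}(y)=z$ with $z\in Hy$, so the preimage must be sought inside the finite set $Hy$. Choose $h\in H$ agreeing with $f$ on the finite set $Hy$. Then $h$ maps $Hy$ into $Hy$ injectively, hence bijectively because $Hy$ is finite. So $f$ maps $Hy$ onto $Hy\ni y$, and $f$ is surjective.

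Thus $\overline{H}\subseteq\Sym(X)$. Because the topologies agree, the closure of $H$ in $\Sym(X)$ is $\overline{H}\cap\Sym(X)=\overline{H}$, and this equals $H$ since $H$ is closed in $\Sym(X)$. So $H$ is a closed subset of the compact space $P$, and hence compact.
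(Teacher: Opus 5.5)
Your proof is correct, but the converse direction takes a different route from the paper. The forward direction is the same as the paper's, using Hausdorffness of $\Sym(X)$ and continuity of the action map.

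For the converse, the paper takes the orbits $(X_i)_{i\in I}$ of $H$ and uses the compact group $\prod_{i\in I}\Sym(X_i)$ as the ambient space. Its elements are already permutations of $X=\bigsqcup_{i\in I}X_i$. So the paper only has to check that the inclusion $\prod_{i\in I}\Sym(X_i)\to\Sym(X)$ is continuous, since each $U_{x,y}$ constrains only finitely many factors. After that, $H$ is a closed subset of a compact subset of $\Sym(X)$.

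You instead take the compact set $\prod_{x\in X}Hx\subseteq X^{X}$, which contains non-injective maps. You therefore have to prove separately that the closure of $H$ in $X^{X}$ consists of bijections. Your injectivity argument does this correctly. So does your surjectivity argument, which uses that some $h\in H$ maps the finite invariant set $Hy$ injectively, hence bijectively, into itself.

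The paper's choice of ambient space avoids what you call the main obstacle altogether, so its argument is shorter. Your version costs an extra step but shows exactly where finiteness of orbits enters beyond Tychonoff. It also gives a by-product: for a subgroup of $\Sym(X)$ with finite orbits, being closed in $\Sym(X)$ is the same as being closed in $X^{X}$.
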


\begin{proof}
If $H$ is compact, then $H$ is closed in $\Sym(X)$ as $\Sym(X)$ is Hausdorff. Furthermore, $Hx=\Phi|_{H\times\{x\}}$ is compact because $\Phi$ is continuous and hence finite.

Conversely, assume that $H\le\Sym(X)$ is closed and has finite orbits $(X_{i})_{i\in I}$. Then $H\le\prod_{i\in I}\Sym(X_{i})$. Since every $X_{i}$ is finite, $\Sym(X_{i})$ is compact and hence so is $\prod_{i\in I}\Sym(X_{i})$ by Tychonoff's theorem. Therefore, the conclusion follows if we show that the inclusion map $\prod_{i\in I}\Sym(X_{i})\to\Sym(X)$ is continuous. Indeed, an intersection $U_{x,y}\cap\prod_{i\in I}\Sym(X_{i})$ restricts only finitely many factors and hence gives rise to an open subset of the product topology.
\end{proof}

\subsection{Graphs}\label{sec:graphs}

There are inequivalent definitions for many graph-theoretic concepts. The following ones, based on \cite{RS26}, are useful in the context of local action~diagrams.

A \textbf{graph} $\Gamma = (V, A, o, t, r)$ consists of a vertex set $V$, an arc set $A$, \textbf{origin} and \textbf{terminus} maps $o,t : A \to V$ and a \textbf{reversal} map $r : A \to A,\ a\mapsto\overline{a}$ so that $r^{2} = \mathrm{id}$ and $o(r(a)) = t(a)$ for all $a\in A$. For $a\in A$, the pair $\{a, \overline{a}\}$ is an \textbf{edge}. An arc $a \in A$ is a \textbf{loop} if $o(a) = t(a)$. Note that a loop $a\in A$ may be either \textbf{orientable}, i.e. $a\neq\overline{a}$, or \textbf{non-orientable}, i.e. $a=\overline{a}$. A graph is \textbf{simple} if it contains no loops and for every $(u,v) \in V^{2}$ there is at most one $a\in A$ such that $(u,v)=(o(a),t(a))$. In this case, an arc $a \in A$ may be referred to by $(o(a), t(a))$. 

A \textbf{subgraph} of a graph $\Gamma = (V, A, o, t, r)$ is a graph $\Gamma' = (V', A', o', t', r')$ such that $V' \subseteq V$, $A' \subseteq A$, $o = o|_{A'}$, $t = t|_{A'}$, and $r = r|_{A'}$. For a subset $V' \subseteq V$ the subgraph \textbf{induced} by $V'$ has vertex set $V'$ and arc set $\{a \in A\Gamma \mid o(a), t(a) \in V'\}$.

Let $\Gamma$ and $\Gamma'$ be graphs. A \textbf{homomorphism} $\theta$ from $\Gamma$ to $\Gamma'$ is a pair of maps $\theta_{V} : V\Gamma\to V\Gamma'$ and $\theta_{A} : A\Gamma\to A\Gamma'$ such that we have $\theta_{V}(o(a))=o(\theta_{A}(a))$ and $r(\theta_{A}(a))=\theta_{A}(r(a))$ for all $a\in A$. If $\theta_{V}$ and $\theta_{A}$ are bijective then $\theta$ is an \textbf{isomorphism}. An isomorphism from $\Gamma$ to itself is an \textbf{automorphism}. The set of all automorphisms of $\Gamma$ is denoted by $\Aut(\Gamma)$ and forms a group under composition, which we equip with the permutation topology for its action on $V\Gamma$.

For a graph $\Gamma=(V,A,o,t,r)$ and a group $G\le\Aut(\Gamma)$ there is a well-defined \textbf{quotient graph} $G\backslash\Gamma\!:=\!(G\backslash V,G\backslash A,o',t',r')$ with $o'(Ga)\!:=\!Go(a)$, $t'(Ga)\!:=\!Gt(a)$ and $r'(Ga):=Gr(a)$  ($a\in A$), and quotient homomorphism $\pi=\pi_{(\Gamma,G)}:\Gamma\to G\backslash\Gamma$.

For an index set $I \subseteq \mathbb{Z}$, put $I':=\{i \in I \mid i + 1 \in I\}$. A \textbf{path} in $\Gamma$ indexed by $I$ is a sequence of vertices $(v_{i})_{i \in I}$ and edges $(\{a_{i}, \overline{a_{i}}\})_{i \in I'}$ of $\Gamma$ such that $\{a_{i}, \overline{a_{i}}\}$ is an edge between $v_{i}$ and $v_{i+1}$ for all $i \in I'$. The \textbf{length} of a path indexed by $I$ is $|I'|$. A \textbf{directed path} only includes arcs $a_{i}$ such that $o(a_{i}) = v_{i}$ and $t(a_{i}) = v_{i+1}$ for all $i \in I'$. The \textbf{reverse} of a directed path $P$, denoted $\overline{P}$, only contains the arcs $\overline{a_{i}}$ $(i\in I')$. A (directed) path is \textbf{simple} if all its vertices are distinct. A \textbf{ray} (respectively \textbf{line}) in a simple graph is a simple directed path indexed by $\mathbb{N}_{0}$ (respectively $\mathbb{Z}$). Two rays are equivalent if there is another ray that contains infinitely many vertices of both of them. An \textbf{end} of $\Gamma$ is an equivalence class of rays. The set of all ends of $\Gamma$ is denoted by $\partial\Gamma$. A \textbf{cycle} of length $1$ in $\Gamma$ is a vertex together with two distinct, mutually reverse loops. A cycle of length $2$ in $\Gamma$ is a pair of distinct vertices together with two pairs of mutually reverse arcs connecting them. A cycle of length $n\in\mathbb{N}_{\ge 3}$ in $\Gamma$ is a path indexed by $I = \{0, 1, \dots, n\}$ such that $v_{0} = v_{n}$ but otherwise all vertices are distinct. A \textbf{circuit} of length $n\in\mathbb{N}$ is a directed path indexed by $I=\{0,\ldots,n\}$ such that $v_{0}=v_{n}$. We let $C_{\Gamma}$ denote the set of all circuits of $\Gamma$. Given $u,v\in V\Gamma$, the \textbf{distance} $d(u, v)$ between $u$ and $v$ is the minimal length of a path between $u$ and $v$, if one exists, and infinity otherwise. For $v \in V\Gamma$ and $n \geq 1$, the \textbf{ball} $B(v,n)$ of radius $n$ around $v$ is the subgraph induced by $\{u\in V\Gamma\mid d(v,u)\le n\}$. Similarly, the \textbf{sphere} $S(v,n)$ of radius $n$ around $v$ is the subgraph induced by $\{u\in V\Gamma\mid d(v,u)\ =  n\}$. A graph is \textbf{connected} if there is a path between any two distinct vertices. A \textbf{tree} is a non-empty, simple, connected graph without cycles. 

An \textbf{orientation} of a graph $\Gamma$ is a subset $O\subseteq A\Gamma$ such that $A\Gamma=O\sqcup\overline{O}$. A \textbf{partial orientation} of $\Gamma$ is a subset $O \subseteq A\Gamma$ such that $O\cap\overline{O}=\varnothing$. The distance of $v\in V\Gamma$ to a subgraph $\Gamma'$ is $d(v, \Gamma'):=\min\{d(v, v')\mid v'\in V\Gamma'\}$. An arc $a \in A\Gamma$ is \textbf{oriented towards} $\Gamma'$ if $d(o(a), \Gamma') > d(t(a), \Gamma')$. It is oriented towards an end $\xi\in\partial T$ if there is a ray $R \in \xi$ containing $a$.

\subsection{Local Action Diagrams}\label{sec:local_action_diagrams}

Property ($P$) was first introduced as an independence property by Tits \cite{Tit70} to prove the simplicity of certain groups acting on trees. For closed actions, the following easier-to-state generalisation due to Banks--Elder--Willis \cite{BEW15} includes Tits' Property ($P$) as the case $k=1$. 

\begin{definition}[{\cite[Definition 3.1]{BEW15}}]\label{def:pk_closure}
Let $T$ be a tree and $G\leq\Aut(T)$. The $(P_{k})$-\textbf{closure} ($k\in\bbN_{0}$) of $G$ is the group
\begin{displaymath}
	G^{(P_{k})} = \{h \in \Aut(T) \mid \forall v \in V(T)\ \forall X\underset{\!\!\!\text{finite}\!\!\!}{\subseteq} V(B(v,k))\ \exists g \in G:\ g|_{X} = h|_{X}\}.
\end{displaymath}
If $G=G^{(P_{k})}$ then $G$ is $(P_{k})$-\textbf{closed}, or satisfies \textbf{Property} ($P_{k}$).
\end{definition}

Retain the notation of Definition~\ref{def:pk_closure}. When $T$ is locally finite, the quantification over finite sets $X\subseteq V(B(v,k))$ may be replaced by considering just $V(B(v,k))$ itself. In either case, one can show that $\smash{G^{(P_{0})}\ge G^{(P_{1})}\ge G^{(P_{2})}\ge\!\cdots\!\ge G^{(P_{k})}\ge\!\cdots\!\ge\overline{G}\ge G}$ as well as $\smash{\left(G^{(P_{k})}\right)^{(P_{k})}=G^{(P_{k})}}$ and $\smash{\bigcap_{k\in\bbN_{0}}G^{(P_{k})}=\overline{G}}$. This suggests parametrising (closed) subgroups of $\Aut(T)$ by parametrising ($P_{k}$)-closed groups and forming intersections. Finally, we also say \textbf{$(P)$-closed} instead of $(P_{1})$-closed.

In \cite{RS26}, Reid--Smith introduce a powerful parametrisation of ($P$)-closed groups based on the following combinatorial structure. See also the survey article \cite{RS23}.

\begin{definition}[{\cite[Definition 3.1]{RS26}}]\label{def:local_action_diagram}
A \textbf{local action diagram} $\Delta$ is a triple $(\Gamma, (X_{a})_{a \in A\Gamma}, (G(v))_{v \in V\Gamma})$ consisting of
\begin{enumerate}[(i)]
	\item a connected graph $\Gamma$,
	\item pairwise disjoint, non-empty sets $X_{a}$ ($a\in A\Gamma$), and
	\item closed subgroups $G(v)\le\Sym(X_{v})$ ($v\in V\Gamma$), where $\smash{X_{v}:=\bigsqcup_{a\in o^{-1}(v)}X_{a}}$, such that the sets $X_{a}$ ($\smash{a\in o^{-1}(v)}$) are precisely the orbits of $G(v)$.
\end{enumerate}
Each $X_{a}$ is a \textbf{colour set}, its elements are \textbf{colours}, and each $G(v)$ is a \textbf{local action}.
\end{definition}

Reid--Smith introduce a notion of isomorphism for local action diagrams and show in \cite[Theorem 3.3]{RS26} that there is a one-to-one correspondence between isomorphism classes of ($P$)-closed actions ($T,G$), where $T$ is a tree and $G\le\Aut(T)$, and isomorphism classes of local action diagrams. In the following we describe how to pass from a ($P$)-closed action to a local action diagram, and conversely. Passing from a group acting on a tree to a local action diagram is straightforward.

\begin{definition}[{\cite[Definition 3.6]{RS26}}]
Let $T$ be a tree and $G\le\Aut(T)$. Define a local action diagram $\Delta(T,G)=(\Gamma,(X_{a}),(G(v)))$ as follows.
\begin{enumerate}[(i)]
	\item Put $\Gamma:=G\backslash T$ and let $\pi:T\to G\backslash T$ be the quotient map.
	\item For every $v\in V\Gamma$ pick $\tilde{v}\in VT$ with $\pi(\tilde{v})=v$. Given $a\in A\Gamma$ with $o(a)=v$ define $X_{a}:=\{b\in o^{-1}(\tilde{v})\mid\pi(b)=a\}$. Then $X_{v}=o^{-1}(\tilde{v})$.
	\item For every $v\in V\Gamma$ let $G(v)\le\Sym(X_{v})$ be the closure of the permutation group induced on $X_{v}$ by $G_{\tilde{v}}$.
\end{enumerate}
\end{definition}

To pass from a local action diagram to a group acting on a tree we first describe how to obtain a tree projecting onto $\Delta$, and then define a group acting on it.

\begin{definition}[{\cite[Definition 3.4]{RS26}}]
Let $\Delta=(\Gamma,(X_{a}),(G(v)))$ be a local action diagram. A $\Delta$-\textbf{tree} $\mathbf{T}$ is a triple $(T,\pi,\calL)$ consisting of a tree $T$, a surjective graph homomorphism $\pi:T\to\Gamma$ and a map $\calL:AT\to\bigsqcup_{a\in A\Gamma}X_{a}$, termed $\Delta$-\textbf{colouring}, which for all $v\in VT$ and $a\in o^{-1}(\pi(v))$ restricts to bijections $\calL_{v}:o^{-1}(v)\to X_{\pi(v)}$ and $\calL_{v,a}:\{b\in o^{-1}(v)\mid \pi(b)=a\}\to X_{a}$.
\end{definition}

\begin{lemma}[{\cite[Lemma 3.5]{RS26}}]\label{lem:delta_tree_construction}
Let $\Delta$ be a local action diagram. Then there is a $\Delta$-tree $(T,\pi,\calL)$. Given another $\Delta$-tree $(T', \pi', \calL')$ there is a graph isomorphism $\alpha : T \to T'$ such that $\pi' \circ \alpha = \pi$. 
\end{lemma}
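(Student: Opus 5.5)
Existence will come from building $T$ inductively as a union of balls around a base vertex, and uniqueness from a back-and-forth extension argument that matches the two trees layer by layer while preserving $\pi$.

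\textbf{Existence.} Fix a base vertex $v_0\in V\Gamma$ and build $T$ as an increasing union of finite-depth rooted trees $T_0\subseteq T_1\subseteq\cdots$, together with compatible maps $\pi_n:T_n\to\Gamma$ and partial colourings $\calL_n$.
\begin{enumerate}[(i)]
\item Start with $T_0$ a single vertex $\tilde v_0$, and set $\pi_0(\tilde v_0)=v_0$.
\item Maintain the invariant that every vertex $w$ at distance $k<n$ from $\tilde v_0$ already has its full set of outgoing arcs. These arcs are labelled bijectively by $X_{\pi(w)}$, and an arc with colour $x\in X_a$ is mapped by $\pi$ to $a$.
\item Every vertex $w$ at distance exactly $n\ge1$ has one arc $b$ back towards its parent, with $\overline b$ already coloured and $\pi(b)=\overline{\pi(\overline b)}$. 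To pass from $T_n$ to $T_{n+1}$, assign to $b$ some colour in $X_{\pi(b)}$; any choice works since $X_{\pi(b)}\neq\varnothing$. Then, for each remaining colour $x\in X_{\pi(w)}$ with $x\in X_a$, add a new vertex $w_x$ and an arc from $w$ to $w_x$ with colour $x$ and image $a$. Set $\pi(w_x)=t(a)$, and give the reverse arc the image $\overline a$ (left uncoloured until the next stage).
\end{enumerate}
The union $T=\bigcup_n T_n$ is connected and acyclic by construction, so it is a tree. The map $\pi$ is a graph homomorphism, and by the invariant $\calL$ restricts to the required bijections $\calL_v$ and $\calL_{v,a}$. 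Surjectivity of $\pi$ follows from the connectivity of $\Gamma$: every arc $a$ with $o(a)=\pi(w)$ is hit at each vertex $w$, since $X_a\neq\varnothing$. Hence every path in $\Gamma$ starting at $v_0$ lifts, and so every vertex and arc of $\Gamma$ lies in the image.

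\textbf{Uniqueness.} Given $(T,\pi,\calL)$ and $(T',\pi',\calL')$, choose $\tilde v_0\in VT$ and $\tilde v_0'\in VT'$ with the same image under $\pi$ and $\pi'$; this is possible by surjectivity. Then define $\alpha$ on the balls $B(\tilde v_0,n)$ by induction on $n$. Suppose $\alpha$ is defined on $B(\tilde v_0,n)$ and commutes with the projections.
\begin{enumerate}[(i)]
\item For each vertex $w$ at distance $n$, set $w'=\alpha(w)$. For every $a\in o^{-1}(\pi(w))$, the sets $\{b\in o^{-1}(w)\mid\pi(b)=a\}$ and $\{b'\in o^{-1}(w')\mid\pi'(b')=a\}$ are both in bijection with $X_a$, via $\calL_{w,a}$ and $\calL'_{w',a}$ respectively.
\item For $n\ge1$, the arc from $w$ back to its parent and the arc from $w'$ back to $\alpha$(parent) already correspond under $\alpha$, and both have the same $\pi$-image.
\item Choose a bijection between the remaining arcs within each class $a$. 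The two classes have equal cardinality $|X_a|$, and we remove exactly one element from the same class on both sides. Such a bijection exists even for infinite $X_a$ (cardinalities are preserved under removing one element), and this step uses the axiom of choice.
\item Extend $\alpha$ to the new arcs, and to their termini, by this bijection.
\end{enumerate}
Since both graphs are trees, this defines a bijection on vertices and arcs that commutes with the reversal maps and satisfies $\pi'\circ\alpha=\pi$.

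The only delicate step is the parent-arc bookkeeping in (ii)–(iii) of the uniqueness argument: removing the already-matched arc from the class $\pi(b)$ must leave classes of equal size on both sides, which it does. The existence side has the matching subtlety of choosing a colour for each incoming arc. Otherwise the argument is routine, so I do not expect a serious obstacle.
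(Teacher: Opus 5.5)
Your proof is correct. The existence part is the same layer-by-layer construction the paper includes: your choice of a colour for each parent arc plays exactly the role of the paper's chosen reverse $\overline{v_{+c_{n+1}}}$ of a coloured path. Your inductive matching of balls around two base vertices with the same image is the standard uniqueness argument, which the paper does not reproduce and instead defers to Reid--Smith \cite[Lemma 3.5]{RS26}.
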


We refer to \cite[Lemma 3.5]{RS26} for a full proof of Lemma~\ref{lem:delta_tree_construction} but we include the construction of $\Delta$-trees here for the reader's convenience. See also Example~\ref{ex:delta_tree}.

The following definitions are needed to construct~$T$. Let $\Delta=(\Gamma,(X_{a}),(G(v)))$ be a local action diagram. Given $v\in V\Gamma$ and $c\in X_{v}$ the \textbf{type} $p(c)$ of $c$ is the unique arc $a\in o^{-1}(v)$ with $c\in X_{a}$. A \textbf{coloured path} of length $n\in\mathbb{N}_{0}$ in $\Gamma$ is a sequence $\calC=(c_1, c_2, \dots c_{n})$ of colours such that $o(p(c_{i+1})) = t(p(c_{i}))$ for all $1 \leq i < n$. The \textbf{origin} of $\calC$ is $o(p(c_{1}))$. For all $m\le n$, the path $(c_{1},c_{2},\ldots,c_{m})$ is a \textbf{prefix} of~$\calC$, and any path $(d_{1},d_{2},\ldots,d_{n})$ with $\smash{p(d_{i})=\overline{p(c_{i})}}$ for all $1\le i\le n$ is a \textbf{reverse} of $\calC$.

Pick a base vertex $v_{0}\in V\Gamma$. We inductively define $VT$ as a set of coloured paths in $\Gamma$ originating at $v_{0}$ with chosen reverses. Start with a self-reverse root vertex $()$ and the paths $\{(c)\mid c\in X_{v_{0}}\}$. For every $c\in X_{v_{0}}$ pick $\smash{\overline{c}\in X_{\overline{p(c)}}}$ and put $\smash{\overline{(c)}:=(\overline{c})}$. Now, given $v=(c_{1},c_{2},\ldots,c_{n})$ with reverse $\overline{v}=(d_{1},d_{2},\ldots,d_{n})$ we introduce a new vertex $v_{+c_{n+1}}:=(c_{1},c_{2},\ldots,c_{n},c_{n+1})$ for all $c_{n+1}\in X_{t(p(c_{n}))}\backslash\{d_{n}\}$. Choose any $\smash{d_{n+1}\in X_{\overline{p(c_{n+1})}}}$ and set $\smash{\overline{v_{+c_{n+1}}}}:=(d_{1},d_{2},\ldots,d_{n+1})$.

Depending on the context it is useful to think of vertices of $T$ either as coloured paths or merely as symbols. Define $AT:=AT_{+}\sqcup AT_{-}$, where $AT_{+}$ consists of pairs $(v,w)$ of vertices of $T$ such that $v$ is a prefix of $w$ of length one less than $w$, and $AT_{-}=\{(w,v)\mid (v,w)\in AT_{+}\}$. Given an arc $(v,w)\in AT$, define origin, terminus and reversal by $o((v,w)):=v$, $t((v,w)):=w$ and $r((v,w)):=(w,v)$ respectively. Further, let $\calL(v,w)$ be the last entry of $w$ and $\calL(w,v)$ be the last entry of $\overline{w}$.

We define the graph homomorphism $\pi:T\to\Gamma$ on vertices by $\pi(()):=v_{0}$ and $\pi(v)=t(p(c_{n}))$ for any $v=(c_{1},c_{2},\ldots,c_{n})\in VT$. For arcs, we set $\pi(a):=p(\calL(a))$.

\begin{example}\label{ex:delta_tree}
Consider the local action diagram $\Delta$ on two vertices in Figure~\ref{fig:lad_and_tree}.

\begin{figure}[ht]
\raisebox{-.5\height}{
\begin{tikzpicture}[scale=1.5]
	\begin{scope}
		[decoration={markings,
			mark=at position 0.52 with {\arrow{>}},
			mark=at position 0.5 with {\node[above]{\small{$\{1,\!2\}$}};}},
		line width=0.6pt]
		\draw [postaction=decorate] (0,0) .. controls (1/4,1/4) and (3/4,1/4) .. (1,0);
	\end{scope}
	\begin{scope}
		[decoration={markings,
			mark=at position 0.52 with {\arrow{>}},
			mark=at position 0.5 with {\node[below]{\small{$\{\!1'\!,\!2'\!,\!3'\!\}$}};}},
		line width=0.6pt]
		\draw [postaction=decorate] (1,0) .. controls (3/4,-1/4) and (1/4,-1/4) .. (0,0);
	\end{scope}
	
	\node (1) at (0,0) {};
	\draw [fill=white] (1) circle [radius=1.25pt];
	\node [below=0.1cm] at (1) {\small{$S_{2}$}};
	
	\node (2) at (1,0) {};
	\draw [fill] (2) circle [radius=1.25pt];
	\node [below=0.1cm] at (2) {\small{$A_{3}$}};
\end{tikzpicture}
}
\hspace{0.5cm}
\raisebox{-.5\height}{
\begin{tikzpicture}[scale=1.4]
	
	\node[fill=white,draw=black,circle,minimum size=4pt,inner sep=0pt] (origin) at (0, 0) {};
	\node[fill,circle,minimum size=4pt,inner sep=0pt] (1) at ($(origin) + (180:1)$) {};
	\node[fill,circle,minimum size=4pt,inner sep=0pt] (2) at ($(origin) + (0:1)$) {}; 
	\node[fill=white,draw=black,circle,minimum size=4pt,inner sep=0pt] (12) at ($(1) + (135:1)$) {};
	\node[fill=white,draw=black,circle,minimum size=4pt,inner sep=0pt] (13) at ($(1) + (225:1)$) {}; 
	\node[fill=white,draw=black,circle,minimum size=4pt,inner sep=0pt] (22) at ($(2) + (45:1)$) {};
	\node[fill=white,draw=black,circle,minimum size=4pt,inner sep=0pt] (23) at ($(2) + (315:1)$) {}; 
	\node[fill,circle,minimum size=4pt,inner sep=0pt] (122) at ($(12) + (135:1)$) {}; 
	\node[fill,circle,minimum size=4pt,inner sep=0pt] (132) at ($(13) + (225:1)$) {}; 
	\node[fill,circle,minimum size=4pt,inner sep=0pt] (222) at ($(22) + (45:1)$) {}; 
	\node[fill,circle,minimum size=4pt,inner sep=0pt] (232) at ($(23) + (315:1)$) {}; 
	\node[circle,minimum size=4pt,inner sep=0pt, anchor=center,rotate=140] (dots1) at ($(122) + (135:0.3) $) {$\dots$}; 
	\node[circle,minimum size=3pt,inner sep=0pt, anchor=center,rotate=220] (dots2) at ($(132) + (225:0.3)$) {$\dots$}; 
	\node[circle,minimum size=3pt,inner sep=0pt, anchor=center,rotate=40] (dots3) at ($(222) + (45:0.3)$) {$\dots$}; 
	\node[circle,minimum size=3pt,inner sep=0pt, anchor=center,rotate=320] (dots4) at ($(232) + (315:0.3)$) {$\dots$}; 
	
	\begin{scope}[decoration={markings, mark=at position 0.53 with {\arrow{>}}},line width=0.6pt]			
		\begin{scope}[label distance=-0.1cm]
			\draw[postaction={decorate},color=black] (origin) to [bend left] node[label=270:$1$] {} (1);
			\draw[postaction={decorate},color=black] (1) to [bend left] node[label=90:$1'$] {} (origin);
			\draw[postaction={decorate},color=black] (origin) to [bend left] node[label=90:$2$] {} (2);
			\draw[postaction={decorate},color=black] (2) to [bend left] node[label=270:$1'$] {} (origin);
		\end{scope}[label distance=-0.25cm]
		\begin{scope}[label distance=-0.25cm]
			\draw[postaction={decorate},color=black] (1) to [bend left] node[label=225:$2'$] {} (12);
			\draw[postaction={decorate},color=black] (1) to [bend left] node[label=315:$3'$] {} (13);	
			\draw[postaction={decorate},color=black] (12) to [bend left] node[label=45:$1$] {} (1);
			\draw[postaction={decorate},color=black] (13) to [bend left] node[label=135:$1$] {} (1);
			\draw[postaction={decorate},color=black] (2) to [bend left] node[label=135:$2'$] {} (22);
			\draw[postaction={decorate},color=black] (2) to [bend left] node[label=45:$3'$] {} (23);
			\draw[postaction={decorate},color=black] (22) to [bend left] node[label=315:$1$] {} (2);
			\draw[postaction={decorate},color=black] (23) to [bend left] node[label=225:$1$] {} (2);
			\draw[postaction={decorate},color=black] (12) to [bend left] node[label=225:$2$] {} (122);
			\draw[postaction={decorate},color=black] (122) to [bend left] node[label=45:$1$] {} (12);
			\draw[postaction={decorate},color=black] (13) to [bend left] node[label=315:$2$] {} (132);
			\draw[postaction={decorate},color=black] (132) to [bend left] node[label=135:$1'$] {} (13);
			\draw[postaction={decorate},color=black] (22) to [bend left] node[above] {\small{$2$}} (222);
			\draw[postaction={decorate},color=black] (222) to [bend left] node[label=315:$1'$] {} (22);
			\draw[postaction={decorate},color=black] (23) to [bend left] node[label=45:$2$] {} (232);
			\draw[postaction={decorate},color=black] (232) to [bend left] node[label=225:$1'$] {} (23);
		\end{scope}
	\end{scope}
\end{tikzpicture}
}
\caption{A local action diagram
\label{fig:lad_and_tree} $\Delta$ and associated $\Delta$-tree.}
\end{figure}

\noindent
Going through the process described above, we see that every $\Delta$-tree is isomorphic to the $(2,3)$-regular tree. Starting at a chosen base vertex, e.g. the central open vertex, note the choices that were made for the reverse labels in Figure~\ref{fig:lad_and_tree}.
\end{example}

Given a $\Delta$-tree $\mathbf{T}=(T,\pi,\calL)$ associated to a local action diagram $\Delta$ we define a ($P$)-closed group acting on it as a subgroup of $\Aut_{\pi}(T):=\{g\in\Aut(T)\mid \pi\circ g=\pi\}$, the group of automorphisms of $T$ that respect $\pi$. To this end, we first define the \textbf{local action} $\sigma_{\mathcal{L}}(g,v)$ of an automorphism $g\in\Aut_{\pi}(T)$ at a vertex $v\in VT$ via
\begin{displaymath}
	\sigma_{\calL,v}:\Aut_{\pi}(T)\to\Sym(X_{\pi(v)}),\ g\mapsto\sigma_{\calL}(g,v):=\calL\circ g\circ\calL|_{o^{-1}(v)}^{-1}.
\end{displaymath}

\begin{definition}[{\cite[Definition 3.8]{RS26}}]\label{def:universal_group}
Let $\Delta=(\Gamma,(X_{a}),(G(v)))$ be a local action diagram and $\mathbf{T}=(T,\pi,\calL)$ be a $\Delta$-tree. The \textbf{universal group of $\mathbf{T}$ with respect to the local actions $(G(v))_{v}$} is
\begin{displaymath}
	\mathbf{U}(\mathbf{T},(G(v))):=\{g\in\Aut_{\pi}(T)\mid \forall v\in VT:\ \sigma_{\calL,v}(g)\in G(\pi(v))\le\Sym(X_{\pi(v)})\}.
\end{displaymath}
\end{definition}

In Example~\ref{ex:delta_tree}, the universal group in the sense of Definition~\ref{def:universal_group} coincides with Smith's \cite{Smi17} group $\mathrm{U}(S_{2},A_{3})$ up to an isomorphism of group actions.

Due to the fact that balls of radius $1$ around distinct vertices of a tree overlap in at most one single edge, elements of $\mathbf{U}(\mathbf{T},(G(v)))$ are readily constructed by extending a given permissible configuration of local actions on a subtree in an arbitrary (compatible) way. For ease of reference, this is made precise in the following lemma, whose proof is inspired by the proof of \cite[Theorem 3.9]{RS26}.

\begin{lemma}\label{lem:universal_extension}
Let $\Delta=(\Gamma,(X_{a}),(G(v)))$ be a local action diagram and $\mathbf{T}=(T,\pi,\calL)$ be a $\Delta$-tree. Further, let $S$ be a subtree of $T$ and let $g:S\to T$ be an injective graph morphism which respects $\pi$. Given permutations $\tau_{v}\in G(\pi(v))$ for every $v\in VS$ such that $\tau_{o(a)}(\mathcal{L}(a))=\mathcal{L}(ga)$ for all arcs $a\in AS$, there is an element $\widetilde{g}\in\mathbf{U}(\mathbf{T},(G(v)))$ such that $\smash{\widetilde{g}|_{S}=g}$ and $\smash{\sigma_{\mathcal{L}}(\widetilde{g},v)=\tau_{v}}$ for every $v\in VS$.
\end{lemma}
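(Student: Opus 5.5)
We extend $g$ outward from $S$ one sphere at a time, choosing local actions as we go, and obtain $\widetilde{g}$ as the limit of these extensions.

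\textbf{Setup.} If $S=T$ there is nothing to extend, so assume $S\neq T$. We may enlarge the data first: for each $v\in VS$ we want $g$ defined on all of $B(v,1)$. Define $g$ on an arc $b\in o^{-1}(v)$ not in $S$ as the unique arc $b'\in o^{-1}(g v)$ with $\mathcal{L}(b')=\tau_v(\mathcal{L}(b))$. This exists and is unique because $\mathcal{L}_{gv}:o^{-1}(gv)\to X_{\pi(gv)}=X_{\pi(v)}$ is a bijection. It also respects $\pi$: $\tau_v\in G(\pi(v))$ preserves the orbits $X_a$, so $\pi(b')=p(\tau_v\mathcal{L}(b))=p(\mathcal{L}(b))=\pi(b)$. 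The hypothesis $\tau_{o(a)}(\mathcal{L}(a))=\mathcal{L}(ga)$ guarantees that this agrees with $g$ on arcs already in $S$, and injectivity on $o^{-1}(v)$ follows from $\tau_v$ being a bijection. Let $S_1$ be the subtree spanned by $S$ and these arcs.

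\textbf{Inductive step.} Let $S_1\subseteq S_2\subseteq\cdots$, where $S_{n+1}=S_n\cup\bigcup_{w\in\partial S_n}B(w,1)$ and $\partial S_n$ is the set of leaves of $S_n$ outside $S$. Each new leaf $w$ is attached to $S_n$ by exactly one arc $b$ with $o(b)=w$, and $g$ is already defined on $b$, with $gb\in o^{-1}(gw)$ of the same type $\pi(b)$. Choose $\tau_w\in G(\pi(w))$ with $\tau_w(\mathcal{L}(b))=\mathcal{L}(gb)$. Such a $\tau_w$ exists because $\mathcal{L}(b)$ and $\mathcal{L}(gb)$ lie in the same colour set $X_{\pi(b)}$, which is an orbit of $G(\pi(w))$. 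Then extend $g$ to $o^{-1}(w)$ via $\tau_w$ exactly as in the setup.

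\textbf{Main obstacle: the extension is a well-defined automorphism.} I expect this bookkeeping to be the hardest part. Well-definedness holds because the balls $B(w,1)$ for distinct new leaves meet $S_n$, and each other, only in the attaching edge or not at all, since $T$ is a tree; this is exactly where the tree structure is used. The increasing union of the $S_n$ is all of $T$ because $T$ is connected. We must also check that the resulting map $\widetilde{g}$ is injective and surjective on $T$.
\begin{itemize}
\item Injectivity is local: the map is injective on each star, and since $\widetilde{g}$ preserves adjacency, a failure of injectivity would produce a non-trivial closed path in the image of a geodesic, which cannot happen in a tree.
\item Surjectivity comes from the same bijectivity on stars: the image of the star of $v$ is the full star of $\widetilde{g}v$. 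Starting from $S\neq\emptyset$, the image is therefore a subtree closed under taking neighbours, hence all of $T$.
\end{itemize}
The map commutes with reversal by construction, so $\widetilde{g}\in\Aut_\pi(T)$.

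\textbf{Conclusion.} For every vertex $v$ we have $\sigma_{\mathcal{L}}(\widetilde{g},v)=\tau_v\in G(\pi(v))$ by construction. This is the prescribed $\tau_v$ for $v\in VS$ and a chosen one elsewhere. Hence $\widetilde{g}\in\mathbf{U}(\mathbf{T},(G(v)))$ and $\widetilde{g}|_S=g$, as required.
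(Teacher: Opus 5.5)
Your proposal is correct and follows essentially the same route as the paper: you extend $g$ outward by distance from $S$, and at each new vertex you choose a local permutation in $G(\pi(v))$ matching the already-determined colour of the arc back towards $S$ (possible because colour sets are orbits), then define the images of the remaining neighbours via that permutation. You also check well-definedness, injectivity and surjectivity explicitly, which the paper leaves implicit; in the injectivity step, ``non-trivial closed path'' should read ``closed non-backtracking path'', since it is local injectivity on stars that rules out backtracking in the image of a geodesic.
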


\begin{proof}
We construct the automorphism $\smash{\widetilde{g}\in\mathbf{U}(\mathbf{T},(G(v)))}$ inductively on vertices at distance $n\in\mathbb{N}_{0}$ from $S$ along with local permutations $\tau_{v}\in G(\pi(v))$ for the same set of vertices. To begin, set $\smash{\widetilde{g}}|_{S}:=g$ along with the given permutations $(\tau_{v})_{v\in VS}$. Now suppose that $\smash{\widetilde{g}}$ has been defined on vertices at distance up to $n\in\mathbb{N}_{0}$ from $S$ along with local permutations and preserves $\pi$. Given a vertex $v\in VT$ at distance $n+1$ from $S$, let $w\in VT$ denote the unique vertex at distance $n$ from $S$ on the shortest path from $v$ to $S$. Define $\smash{\widetilde{g}(v)}$ as the unique vertex $u$ in $B(gw,1)$ such that $\mathcal{L}((gw,u))=\tau_{w}(\mathcal{L}(w,v))$. Since $\tau_{w}\in G(\pi(w))$ we have that $\pi((w,v))=\pi((gw,u))$ and hence $\pi((v,w))=\pi((u,gw))$ as well as $\pi(v)=\pi(u)$. As a consequence, there is an element $\tau_{v}\in G(\pi(v))$ such that $\tau_{v}(\mathcal{L}((v,w)))=\mathcal{L}((u,gw))$.
\end{proof}

\subsection{Willis theory}\label{sec:willis_theory}

Finally, we recall central definitions and results of Willis theory. Let $G$ be a t.d.l.c.\ group. In~\cite{Wil94}, Willis introduced the notions of \emph{scale} of an automorphism $\alpha$ of $G$ and \emph{tidiness} of a compact open subgroup $U$ of $G$ for $\alpha$.

In this setting, note that $[\alpha(U):\alpha(U)\cap U]\in\bbN$ because $\alpha(U)$ is compact and $\alpha(U)\cap U$ is open in $\alpha(U)$. Following \cite{Wil01}, the \textbf{scale} of $\alpha$ is
\begin{displaymath}\index{scale}\index{$s(\alpha)$|ii}
     s(\alpha):=\min\big\{[\alpha(U):\alpha(U)\cap U]\mid U\le G \text{ compact open}\big\}.
\end{displaymath}
    
A compact open subgroup $U\le G$ is \textbf{minimising} if $[\alpha(U):\alpha(U)\cap U]=s(\alpha)$. It is a cornerstone of Willis theory that a compact open subgroup of $G$ is minimising for $\alpha$ if and only if it is \emph{tidy}. Tidiness is phrased in terms of the following subgroups of $G$. Put $U_{0}:=U$. For $n\in\bbN_{0}$, we define $U_{\pm n}=\bigcap_{k=0}^{n}\alpha^{\pm k}(U)$. Now set
\begin{displaymath}
     U_{+}:=\bigcap_{n\in\bbN_{0}}U_{n}=\bigcap_{k=0}^{\infty}\alpha^{k}(U),\qquad U_{-}:=\bigcap_{n\in\bbN_{0}}U_{-n}=\bigcap_{k=0}^{\infty}\alpha^{-k}(U),
\end{displaymath}
\begin{displaymath}
     \ U_{++}:=\bigcup_{n\in\bbN_{0}}\alpha^{n}(U_{+})\qquad\text{and}\qquad U_{--}:=\bigcup_{n\in\bbN_{0}}\alpha^{-n}(U_{-}).
\end{displaymath}
The following verbal descriptions of the above subgroups may serve as a mnemonic.
\begin{displaymath}
     U_{-}=\left\{\text{\begin{tabular}{c} elements of $U$ whose \\ $\alpha$-trajectory is contained in $U$\end{tabular}}\right\},
\end{displaymath}
\begin{displaymath}
     U_{+}=\left\{\text{\begin{tabular}{c} elements of $U$ whose \\ $\alpha^{-1}$-trajectory is contained in $U$\end{tabular}}\right\},
\end{displaymath}
\begin{displaymath}
     U_{--}=\left\{\text{\begin{tabular}{c} elements of $G$ whose $\alpha$-trajectory \\ is eventually contained in $U$\end{tabular}}\right\},
\end{displaymath}
\begin{displaymath}
     U_{++}=\left\{\text{\begin{tabular}{c} elements of $G$ whose $\alpha^{-1}$-trajectory \\ is eventually contained in $U$\end{tabular}}\right\}.
\end{displaymath}
 The subgroup $U$ is \textbf{tidy above} for $\alpha$ if $U = U_{+}U_{-}$, and \textbf{tidy below} for $\alpha$ if $U_{++}$ is closed. It is \textbf{tidy} for $\alpha$ if it is both tidy above and tidy below for $\alpha$. The cornerstone result of Willis theory now reads as follows.
    
\begin{theorem}[{\cite[Theorem 3.1]{Wil01}}]\label{thm:minimising_tidy}
    Let $G$ be a t.d.l.c.\ group, $\alpha\in\Aut(G)$ and $U\le G$ compact open. Then $U$ is minimising for $\alpha$ if and only if it is tidy for $\alpha$.
\end{theorem}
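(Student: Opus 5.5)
The plan is to prove both directions through a single tool: a \emph{tidying procedure} that turns an arbitrary compact open $U$ into a tidy one without increasing $[\alpha(U):\alpha(U)\cap U]$, and that strictly decreases this index whenever $U$ fails to be tidy. Given this procedure, ``minimising $\Rightarrow$ tidy'' is immediate, because a minimising $U$ admits no strict decrease. It then remains to show that all tidy subgroups realise the same, minimal, index. Throughout, write $i_\alpha(U):=[\alpha(U):\alpha(U)\cap U]$.

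First, tidiness above. I would show that $i_\alpha(U_n)$ is non-increasing in $n$, where $U_n=\bigcap_{k=0}^n\alpha^k(U)$. I would compute these indices via the orbit of $\alpha^{n+1}(U)$ on cosets and compare $U_{n+1}$ with $U_n$. Since the index is a positive integer, the sequence stabilises. At a stabilisation point one checks directly that $U_n=(U_n)_+(U_n)_-$, using a compactness argument on nested intersections. If $U$ itself was not tidy above, I expect the comparison to give $i_\alpha(U_1)<i_\alpha(U)$, or at least a strict drop at some stage.

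Second, for $U$ tidy above, I use the identity $i_\alpha(U)=[\alpha(U_+):U_+]$, which follows from $U=U_+U_-$ together with $\alpha(U_-)\le U_-$. I then turn to tidiness below. Set $\mathcal L:=\{x\in G\mid \alpha^k(x)\in U \text{ for all but finitely many } k\in\mathbb Z\}$ and $L:=\overline{\mathcal L}$. The crucial claims are that $L$ is compact and normalised by $\alpha$, and that $U_{++}$ is closed if and only if $L\le U$. Following Willis, I would then put $U'':=\{u\in U\mid uL\subseteq LU\}$, an open subgroup, and $U':=U''L$. One verifies that $U'$ is compact open, tidy above and tidy below, and that $i_\alpha(U')\le i_\alpha(U)$, with strict inequality when $L\not\le U$. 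This step is the main obstacle. Proving that $L$ is compact requires showing that $\mathcal L\cap U$ has finite index in $\mathcal L$ with bounded orbits. My first attempt would be to exploit that elements of $\mathcal L$ conjugate $U_{++}$-pieces into $U$ after finitely many iterations, and to bound $[\mathcal L:\mathcal L\cap U]$ by $[\alpha(U_+):U_+]$.

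Finally, tidy $\Rightarrow$ minimising. For tidy $U$ one has $i_{\alpha^n}(U)=i_\alpha(U)^n$, because $\alpha^n(U_+)\ge\cdots\ge U_+$ is a chain of indices each equal to $i_\alpha(U)$. For an arbitrary compact open $V$ one has submultiplicativity $i_{\alpha^n}(V)\le i_\alpha(V)^n$. Comparing $U$ and $V$ gives
\begin{displaymath}
i_{\alpha^n}(U)\le [U:U\cap V]\,[V:U\cap V]\; i_{\alpha^n}(V),
\end{displaymath}
with a constant independent of $n$. Taking $n$-th roots and letting $n\to\infty$ yields $i_\alpha(U)\le i_\alpha(V)$. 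Hence every tidy $U$ is minimising. Combined with the strict-decrease property of the tidying procedure, this gives the converse as well.
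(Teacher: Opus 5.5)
The paper gives no proof of this statement; it quotes it from \cite[Theorem 3.1]{Wil01}. Your outline follows Willis's own architecture: tidying above via the $U_n$, tidying below via $\mathcal L$, $L$, $U''$ and $U'=U''L$, and an $n$-th power comparison. As written, though, it has two genuine gaps.

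\emph{First gap: tidy $\Rightarrow$ minimising.} The chain $\alpha^n(U_+)\ge\cdots\ge U_+$ only gives $[\alpha^n(U_+):U_+]=i_\alpha(U)^n$. Tidiness above yields $\alpha^n(U)=\alpha^n(U_+)\,(\alpha^n(U)\cap U)$, hence $i_{\alpha^n}(U)=[\alpha^n(U_+):\alpha^n(U_+)\cap U]$. However, $\alpha^n(U_+)\cap U$ can be strictly larger than $U_+$ when $n\ge 2$. Your argument never uses tidiness below, and without it the conclusion is false. Take a finite group $F\neq 1$, $G=F^{\mathbb{Z}}$, the shift $(\alpha x)_k=x_{k-1}$ and $U=\{x\mid x_1=1\}$. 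Then $U$ is tidy above with $i_\alpha(U)=|F|$, yet $i_{\alpha^n}(U)=|F|$ for all $n\ge 1$ and $s(\alpha)=1$.

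The missing input is $U_{++}\cap U=U_+$ whenever $U_{++}$ is closed. By Baire, $U_+$ is open in the locally compact group $U_{++}=\bigcup_n\alpha^n(U_+)$. Compactness then gives $U\cap U_{++}\subseteq\alpha^M(U_+)$ for some $M$. For $y\in U_{++}\cap U_-$ this yields $\alpha^M(y)\in U\cap U_{++}\subseteq\alpha^M(U_+)$, i.e.\ $y\in U_+$. Now $U=U_+U_-$ gives $U_{++}\cap U=U_+$. Only with this do you get $\alpha^n(U_+)\cap U=U_+$ and $i_{\alpha^n}(U)=i_\alpha(U)^n$.

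\emph{Second gap: minimising $\Rightarrow$ tidy.} Here you have really only shown minimising $\Rightarrow$ tidy above. Everything about tidiness below is asserted rather than proved:
\begin{enumerate}[(a)]
\item that $L$ is compact;
\item that $U''L$ is a compact open subgroup;
\item that $U''L$ is tidy;
\item the strict inequality $i_\alpha(U')<i_\alpha(U)$ when $L\not\le U$.
\end{enumerate}
Your suggested route to compactness of $L$ is a heuristic, not an argument, and this is precisely the substance of Willis's theorem.

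Incidentally, the strict inequality (d) can be bypassed. Once a single tidy $V$ exists, the corrected power identity and your comparison inequality give $s(\alpha^n)=s(\alpha)^n$. For $U$ minimising and tidy above one then gets $s(\alpha)^n\le i_{\alpha^n}(U)=s(\alpha)^n/[\alpha^n(U_+)\cap U:U_+]$, which forces $U_{++}\cap U=U_+$. Since a subgroup meeting the open subgroup $U$ in a closed set is closed, $U_{++}$ is closed. This still requires the existence of some tidy subgroup, i.e.\ the compactness of $L$ and the tidiness of $U'$, which your proposal leaves unproved.
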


\section{Axes of Translation}\label{sec:axes_of_translation}

In this section, we show that there are one-to-one correspondences between orbits of translation axes of a $(P)$-closed group as well as conjugacy classes of its translations and certain equivalence classes of \emph{translatable circuits} of the local action diagram. For the purpose of computing the scale, the focus on translations is motivated from the following observation.

\begin{remark}\label{rem:scale_why_translations}
    Let $T$ be a tree and $G\le\Aut(T)$. By \cite[Proposition 3.2]{Tit70}, any automorphism $g\in G$ acts in exactly one of three possible ways: either $g$ fixes a vertex $v\in VT$, or $g$ inverts an arc $a\in AT$, or $g$ translates along an axis of $T$. When $G$ has compact vertex stabilisers, e.g., when $T$ is locally finite, the first two cases imply that $g$ has trivial scale: the groups $G_{v}$ and $G_{\{a,\overline{a}\}}$ respectively are compact, open and normalised by $g$. We therefore focus our attention on translations, which is sufficient when $G$ has compact vertex stabilisers.

    When $G$ does not have compact vertex stabilisers, it may contain automorphisms of non-trivial scale that are not translations. For example, consider the local action diagram $\Delta$ consisting of a single self-reverse loop labelled by the group $\Aut(T_{d})$ ($d\in\mathbb{N}_{\ge 3})$ acting on the set $VT_{d}$, see Figure~\ref{fig:non_locally_finite_example}. Let $\mathbf{T}$ be a $\Delta$-tree and let $\varepsilon\in V\mathbf{T}$ denote the empty coloured path. Note that $S(v,1)\cong VT_{d}$ for all $v\in V\mathbf{T}$.    
    \begin{figure}[ht]
        \raisebox{-.5\height}{
        \begin{tikzpicture}
            \node (0) at (0,0) {};
            \draw [fill] (0) circle [radius=1.5pt];
            \node [below=0.1cm] at (0) {$\Aut(T_{d})$}; 
            \draw (0,0) .. controls (1,1.5) and (-1,1.5) .. (0,0);
            \node (1) at (0,1) {};
            \node [above=0.1cm] at (1) {$VT_{d}$};
        \end{tikzpicture}
        }
        \hspace{0.5cm}
        \raisebox{-.5\height}{
        \begin{tikzpicture}[]
            \node (0) at (0,0) {};
            \draw [fill] (0) circle [radius=1.5pt];
            \node [below=0.1cm] at (0) {$\varepsilon$};
            
            \draw (-3,1) -- (0.5,1);
            \draw [dashed] (0.5,1) -- (1.5,1);
            \draw (1.5,1) -- (3,1);
            \node (v0) at (-2,1) {};
            \draw [fill] (v0) circle [radius=1.5pt];
            \node [above=0.1cm] at (v0) {$v$};
            \node (v1) at (-1,1) {};
            \draw [fill] (v1) circle [radius=1.5pt];
            \node [above=0.1cm] at (v1) {$\varphi(v)$};
            \node (v2) at (0,1) {};
            \draw [fill] (v2) circle [radius=1.5pt];
            \node [above=0.1cm] at (v2) {$\varphi^{2}(v)$};
            \node (vn) at (2,1) {};
            \draw [fill] (vn) circle [radius=1.5pt];
            \node [above=0.1cm] at (vn) {$\varphi^{n}(v)$};

            \draw (0,0) -- (-2,1);
            \node[label={[label distance=-7pt]210:$a$}] at (-1,0.5) {};
            \draw (0,0) -- (-1,1);
            \draw (0,0) -- (0,1);
            \draw (0,0) -- (2,1);
            \node[label={[label distance=-7pt]-60:$g^{n}a$}] at (1,0.5) {};
        \end{tikzpicture}
        }
        \caption{A local action diagram $\Delta$ and a part of $S(\varepsilon,1)$ in $\mathbf{T}$.}
        \label{fig:non_locally_finite_example}
    \end{figure}
    Further, let $\varphi\in\Aut(T_{d})$ be a translation and let $v\in VT_{d}$ lie on the axis of $\varphi$. Put $U':=\Aut(T_{d})_{v}$. Now, consider the arc $a:=(\varepsilon,v)\in A\mathbf{T}$. Then $U:=G_{a}$ has finite vertex orbits on $\mathbf{T}$. In particular, $G$ is locally compact. Next, let $g\in G_{\varepsilon}$ be  an element that acts like $\varphi$ on $S(\varepsilon,1)\cong VT_{d}$. By \cite[Theorem 7.7]{Moe02} and the orbit-stabiliser theorem,
    \begin{align*}
        s_{G}(g)&=\lim_{n\to\infty}[U:U\cap g^{-n}Ug^{n}]^{\frac{1}{n}}=\lim_{n\to\infty}[\Aut(T_{d})_{v}:\Aut(T_{d})_{v,\varphi^{-n}(v)}] \\
        &=\lim_{n\to\infty}[G_{a}:G_{a,g^{-n}a}]=\lim_{n\to\infty}[U':U'\cap \varphi^{-n}U'\varphi^{n}]^{\frac{1}{n}}=s_{\Aut(T_{d})}(\varphi)\neq 1.
    \end{align*}
    To avoid scenarios like the above we restrict ourselves to $(P)$-closed groups with compact vertex stabilisers in Section~\ref{sec:scale_of_translations}.
\end{remark}
	\begin{definition}\label{def:multi_coloured_circuit}
		Let $\Delta = (\Gamma, (X_{a}), (G(v)))$ be a local action diagram and $l \in \mathbb{N}$. A~\textbf{multi-coloured circuit of length $l$} of $\Delta$ is a tuple $C=(a_{i}, S_{i})_{i=0}^{l-1}$ consisting of arcs $a_{i}\in A\Gamma$ and sets $S_{i}\subseteq X_{\overline{a_{i-1}}}\times X_{a_{i}}$ of pairs of colours so that for all $i \in \set*{0,\dots, l\!-\!1}$ we have $t(a_{i}) = o(a_{i+1})$, with indices taken modulo $l$.
	\end{definition}

	Translation axes will be obtained from multi-coloured circuits by picking specific pairs of colours to label the axis. 

	\begin{definition}
        Let $\Delta = (\Gamma, (X_{a}), (G(v)))$ be a local action diagram and $l \in \mathbb{N}$. Further, let $C=(a_{i}, S_{i})_{i=0}^{l-1}$ be a multi-coloured circuit of length $l$ of $\Delta$. A \textbf{cover} of $C$ is a sequence $\smash{\widetilde{C}=(d_{k-1},c_{k})_{k\in\mathbb{Z}}}$ such that $(d_{k-1},c_{k}) \in S_{k\text{ mod }l}$ for all $k \in \mathbb{Z}$.
	\end{definition}

	Covers will correspond to the (pairs of) colours labelling the arcs of an axis. In fact, if $L=(a_{k}, \overline{a_{k}})_{k \in \mathbb{Z}}$ is a line in a $\Delta$-tree $(T,\pi,\mathcal{L})$ then we obtain sequences of colours $c_{k}:=\mathcal{L}(a_{k})$ and $d_{k} \coloneqq \mathcal L(\overline{a_{k}})$, where $k\in\mathbb{Z}$. Since every colour is used exactly once to label the arcs originating at a vertex in a $\Delta$-tree we deduce that $c_{k} \not= d_{k-1}$.  Moreover, if $g$ is a translation of length $l$ along $L$, then for every $k\in\mathbb{Z}$ the pairs $(d_{k-1},c_{k})$ and $(d_{k-1+l},c_{k+l})$ lie in the same orbit for the diagonal action of $G(\pi(o(a_{i})))$ on $X_{\overline{a_{i-1}}}\times X_{a_{i}}$, see Figure~\ref{fig:admissible_col_circuit_aut}. This motivates the following definition.
    
    \begin{figure}[ht]
		\centering%
		\begin{tikzpicture}[scale=1]
			\begin{scope}[xscale=1, yscale=1, thick,decoration={
					markings,
					mark=at position 0.5 with {\arrow{>}}}
				]
				\draw[postaction={decorate}, black] (-2, 0) -- node[below, text height=0.4cm] {$d_{k-1}$} (-3, 0);
				\draw[postaction={decorate}, black] (-2, 0) -- node[below, text height=0.4cm] {$c_{k}$}(-1, 0);
				\draw [fill] (-2, 0) circle [radius=1pt];

				\draw[postaction={decorate}, black] (2, 0) -- node[below, text height=0.4cm] {$d_{k-1+l}$} (1, 0);
				\draw[postaction={decorate}, black] (2, 0) -- node[below, text height=0.4cm] {$c_{k+l}$} (3, 0);
				\draw [fill] (2, 0) circle [radius=1pt];

				\draw[snake arrow] (-0.5, 0) -- node[above, text depth=0.2cm] {$g$} (0.5, 0);
			\end{scope}
		\end{tikzpicture}
		\caption{A requirement on the local actions of translations.}%
		\label{fig:admissible_col_circuit_aut}%
	\end{figure}
    
    \begin{definition}\label{def:translatable_circuit}
		Let $\Delta = (\Gamma, (X_{a}), (G(v)))$ be a local action diagram and $l \in \mathbb{N}$. Furthermore, let $C=(a_{i}, S_{i})_{i=0}^{l-1}$ be a multi-coloured circuit of length $l$ of $\Delta$. Then ~$C$~is \textbf{translatable} if for all $i\in\{0,\ldots,l-1\}$, the set $S_{i}$ is a non-diagonal orbit of the diagonal action of $G(o(a_{i}))$ on $X_{\overline{a_{i-1}}} \times X_{a_{i}}$.
    \end{definition}

    For the sake of brevity, we will subsequently refer to translatable multi-coloured circuits as \textbf{translatable circuits}.

    \vspace{0.2cm}
    We now show that translatable circuits always give rise to translation axes. Let $\Delta \!=\! (\Gamma, (X_{a}), (G(v)))$ be a local action diagram and $\mathbf T = (T, \pi, \mathcal{L})$ be a $\Delta$-tree. Recall that $VT$ is the set of coloured paths (with chosen reverses) in $\Gamma$ that originate at a chosen base vertex. Given a translatable circuit $C$ of $\Delta$ and a cover $\smash{\widetilde{C}=(d_{k-1},c_{k})_{k\in\mathbb{Z}}}$ of $C$, the set of all start vertices of lines labelled by $\smash{\widetilde{C}}$ in $T$ is
	\[
	S_{\widetilde{C}} \coloneqq \set*{S \in VT \mid \exists \text{ line } (a_{k},\overline{a_{k}})_{k\in\mathbb{Z}} \text{ in $T$}: o(a_{0})=S,\ \mathcal{L}(a_{k})=c_{k},\ \mathcal{L}(\overline{a_{k}})=d_{k} }.
	\]
	Note that $\smash{S_{\widetilde{C}}}$ is non-empty because $C$ is translatable, and that for $\smash{S\in S_{\widetilde{C}}}$ there is a unique line $(a_{k},\overline{a_{k}})_{k\in\mathbb{Z}}$ with the required properties, which we denote by $\smash{L_{S,\widetilde{C}}}$.
	\begin{proposition}\label{prop:translatable_circuit_to_translation}
		Let $\Delta = (\Gamma, (X_{a}), (G(v)))$ be a local action diagram, $\mathbf{T}=(T,\pi,\mathcal{L})$ be a $\Delta$-tree, and $C$ be a translatable circuit of length $l\in\mathbb{N}$ of $\Delta$. Then for any cover $\smash{\widetilde{C}}$ of $C$ and any start vertex $S \in S_{\widetilde{C}}$ the group $\mathbf{U}(\mathbf{T}, (G(v)))$ contains a translation of length $l$ along the line $L_{S,\widetilde{C}}$.
	\end{proposition}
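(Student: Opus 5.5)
Write the line as $L:=L_{S,\widetilde{C}}=(b_{k},\overline{b_{k}})_{k\in\mathbb{Z}}$ with vertices $v_{k}:=o(b_{k})$, so that $v_{0}=S$, $\mathcal{L}(b_{k})=c_{k}$ and $\mathcal{L}(\overline{b_{k}})=d_{k}$. The plan is to apply Lemma~\ref{lem:universal_extension} with the subtree $L$ and the shift $g:L\to T$, $v_{k}\mapsto v_{k+l}$, $b_{k}\mapsto b_{k+l}$. This map is an injective graph morphism, since it is a shift along a line.

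First I check that $g$ respects $\pi$. Because $\pi(b_{k})=p(\mathcal{L}(b_{k}))=p(c_{k})$, and $c_{k}$ lies in the second factor of $S_{k \bmod l}\subseteq X_{\overline{a_{k-1}}}\times X_{a_{k}}$, we get $\pi(b_{k})=a_{k \bmod l}$. Hence $\pi(b_{k+l})=\pi(b_{k})$. On vertices, $\pi(v_{k})=o(a_{k \bmod l})$, which is also $l$-periodic.

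Next I choose the local permutations. For each $k$, the two pairs $(d_{k-1},c_{k})$ and $(d_{k-1+l},c_{k+l})$ both lie in the single orbit $S_{k \bmod l}$ of the diagonal action of $G(o(a_{k \bmod l}))$. So there is $\tau_{v_{k}}\in G(\pi(v_{k}))$ with $\tau_{v_{k}}(d_{k-1})=d_{k-1+l}$ and $\tau_{v_{k}}(c_{k})=c_{k+l}$. The arcs of $L$ leaving $v_{k}$ are exactly $b_{k}$ and $\overline{b_{k-1}}$, with labels $c_{k}$ and $d_{k-1}$. Their images under $g$ are $b_{k+l}$ and $\overline{b_{k+l-1}}$, with labels $c_{k+l}$ and $d_{k+l-1}$. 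Thus the compatibility condition $\tau_{o(a)}(\mathcal{L}(a))=\mathcal{L}(ga)$ holds for every arc $a\in AL$.

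Lemma~\ref{lem:universal_extension} then gives $\widetilde{g}\in\mathbf{U}(\mathbf{T},(G(v)))$ extending $g$. This $\widetilde{g}$ leaves $L$ invariant and shifts it by $l\ge 1$, so it fixes no vertex. It also inverts no edge, since on $L$ it moves every vertex by distance exactly $l$ in the same direction. By Tits' trichotomy (Remark~\ref{rem:scale_why_translations}), $\widetilde{g}$ is therefore a translation. Its axis is the unique invariant line on which it acts by a shift, namely $L$, and its translation length is $l$.

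The main obstacle is essentially bookkeeping rather than anything deep. One needs the indexing convention (that $c_{k}$ and $d_{k-1}$ label the arcs at $v_{k}$) to match the definition of $S_{\widetilde{C}}$, so that the orbit condition for $S_{k\bmod l}$ is applied at the correct vertex. The non-diagonality of the $S_{i}$ is not needed here; it is only used to guarantee that $S_{\widetilde{C}}\neq\varnothing$, i.e.\ that $L$ exists.
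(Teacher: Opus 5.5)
Your proof is correct and follows the paper's argument: both pick $\tau_{v_k}\in G(\pi(v_k))$ sending $(d_{k-1},c_k)$ to $(d_{k-1+l},c_{k+l})$, which exists because both pairs lie in the orbit $S_{k\bmod l}$, then define the shift $v_k\mapsto v_{k+l}$ on $L_{S,\widetilde{C}}$ and extend it via Lemma~\ref{lem:universal_extension}. You also verify explicitly that the shift respects $\pi$, that the label condition of the lemma holds, and that the extension is a genuine translation of length $l$ with axis $L_{S,\widetilde{C}}$; the paper leaves these implicit, and the no-fixed-vertex/no-inversion step is most cleanly justified by closest-point projection onto the invariant line.
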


	\begin{proof}
        Let $\smash{L_{S,\widetilde{C}}=(a_{k},\overline{a_{k}})_{k\in\mathbb{Z}}}$ and set $v_{k}:=o(a_{k})$ ($k\in\mathbb{Z}$). Recall that we have $\mathcal{L}(a_{k})=c_{k}$ and $\mathcal{L}(\overline{a_{k}})=d_{k}$ for all $k\in\mathbb{Z}$. Since $C$ is translatable, there is, for every $k\in\mathbb{Z}$, an element $\tau_{k}\in G(\pi(v_{k}))$ such that $\tau_{k}(d_{k-1},c_{k})=(d_{k-1+l},c_{k+l})$. Hence we may define a translation $g\in\mathbf{U}(\textbf{T},(G(v)))$ of length $l$ along $\smash{L_{S,\widetilde{C}}}$ by setting $g(v_{k}):=v_{k+l}$ and $\sigma_{\mathcal{L}}(g,v_{k}):=\tau_{k}$ for all $k\in\mathbb{Z}$, as well as extending $g$ to all of $T$ in an arbitrary (compatible)  way using Lemma~\ref{lem:universal_extension}.
	\end{proof}

    Conversely, translation axes give rise to translatable circuits.        
    \begin{proposition}\label{prop:translation_to_translatable_circuit}
		Let $\Delta = (\Gamma, (X_{a}), (G(v)))$ be a local action diagram, $\mathbf{T}=(T,\pi,\mathcal{L})$ be a $\Delta$-tree, $G:=\mathbf{U}(\mathbf{T},(G(v)))$, and $L$ be a translation axis of $G$ in $T$. Then for any translation $g \in G$ of length $l\in\mathbb{N}$ along $L$ and any $S\in VL$ there is a translatable circuit $C$ of length $l$ of $\Delta$ and a cover $\smash{\widetilde{C}}$ of $C$ such that $S\in \smash{S_{\widetilde{C}}}$ and $\smash{L=L_{S,\widetilde{C}}}$.
	\end{proposition}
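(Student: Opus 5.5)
We read off the circuit directly from the line and the translation. Write $L=(a_k,\overline{a_k})_{k\in\mathbb{Z}}$, indexed so that $o(a_0)=S$ and so that $g$ translates in the positive direction, i.e. $g a_k=a_{k+l}$ for all $k\in\mathbb{Z}$; if $g$ translates the other way along the given orientation, we simply reverse the orientation of $L$ before indexing. Put $v_k:=o(a_k)$, $c_k:=\mathcal{L}(a_k)$ and $d_k:=\mathcal{L}(\overline{a_k})$. Since $g\in\Aut_\pi(T)$, we have $\pi(a_{k+l})=\pi(a_k)$ for all $k$. Define $b_i:=\pi(a_i)\in A\Gamma$ for $i\in\{0,\ldots,l-1\}$. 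Because $\pi$ is a graph homomorphism, $t(b_i)=\pi(v_{i+1})=o(b_{i+1})$, and cyclically $t(b_{l-1})=\pi(v_l)=\pi(gv_0)=\pi(v_0)=o(b_0)$. So $(b_i)$ is a circuit in $\Gamma$.

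Next, define $S_i$ to be the orbit of $(d_{i-1},c_i)$ under the diagonal action of $G(o(b_i))$. Check that this pair lies in $X_{\overline{b_{i-1}}}\times X_{b_i}$. This holds because $\mathcal{L}$ restricted to arcs at $v_i$ of type $a$ is a bijection onto $X_a$, together with $\pi(\overline{a_{i-1}})=\overline{b_{i-1}}$ and $\pi(a_i)=b_i$. The orbit is non-diagonal because $\overline{a_{i-1}}\neq a_i$: these arcs leave $v_i$ towards the distinct vertices $v_{i-1}$ and $v_{i+1}$ on a line, and $\mathcal{L}_{v_i}$ is injective, so $d_{i-1}\neq c_i$. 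In particular, when $\overline{b_{i-1}}=b_i$, the orbit avoids the diagonal. Hence $C:=(b_i,S_i)_{i=0}^{l-1}$ is a translatable circuit of length $l$.

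The main step is to show that $\widetilde{C}:=(d_{k-1},c_k)_{k\in\mathbb{Z}}$ is a cover of $C$, i.e. that $(d_{k-1},c_k)\in S_{k\bmod l}$ for every $k$, not just for $0\le k<l$. We show by induction in both directions that $(d_{k-1},c_k)$ and $(d_{k-1+l},c_{k+l})$ lie in the same $G(\pi(v_k))$-orbit. Consider the local action $\sigma:=\sigma_{\mathcal{L}}(g,v_k)\in G(\pi(v_k))$, which lies in the local action group by the definition of $\mathbf{U}(\mathbf{T},(G(v)))$. By definition, $\sigma(\mathcal{L}(b))=\mathcal{L}(gb)$ for arcs $b$ at $v_k$. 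Since $ga_k=a_{k+l}$ and $g\overline{a_{k-1}}=\overline{a_{k-1+l}}$, we get $\sigma(d_{k-1},c_k)=(d_{k-1+l},c_{k+l})$. Iterating this with $g^{\pm1}$ covers all $k$, noting that $\pi(v_{k+l})=\pi(v_k)$, so the relevant group is always $G(o(b_{k\bmod l}))$.

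Finally, $L$ is a line with $o(a_0)=S$ whose labels are exactly $c_k$ and $d_k$. Hence $S\in S_{\widetilde{C}}$, and by the uniqueness noted before Proposition~\ref{prop:translatable_circuit_to_translation}, $L=L_{S,\widetilde{C}}$. The only delicate point is the bookkeeping of orientation and indices, making sure $g$ shifts indices by $+l$; this is resolved by the choice of orientation made at the start.
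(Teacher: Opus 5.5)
Your proposal is correct and follows the paper's proof: index $L$ from $S$ in the direction of translation of $g$, project the arcs $a_0,\ldots,a_{l-1}$ to $\Gamma$ to get the circuit, and take $S_i$ to be the diagonal orbit containing the pairs $(d_{i-1+kl},c_{i+kl})$. You also make explicit two facts the paper asserts in one line: the $l$-periodicity of these orbits, which follows from $\sigma_{\mathcal{L}}(g,v_k)\in G(\pi(v_k))$, and their non-diagonality, which follows from $d_{i-1}\neq c_i$ by injectivity of $\mathcal{L}_{v_i}$.
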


	\begin{proof}
        Write $L=(a_{k},\overline{a_{k}})_{k\in\mathbb{Z}}$ with $o(a_{0})=S$, where $k$ increases in the direction of translation of $g$. For all $k\in\mathbb{Z}$, set $v_{k}:=o(a_{k})$ as well as $c_{k}:=\mathcal{L}(a_{k})$ and $d_{k}:=\mathcal{L}(\overline{a_{k}})$. We define a translatable circuit $C$ of $\Delta$ such that $\smash{\widetilde{C}:=(d_{k-1},c_{k})_{k\in\mathbb{Z}}}$ is a cover of $C$ and $\smash{L=L_{S,\widetilde{C}}}$. First, note that for every $i\in\{0,\ldots,l\! -\! 1\}$ the set $\{(d_{i-1+kl},c_{i+kl})\mid k\in\mathbb{Z}\}$ is contained in a non-diagonal orbit $S_{i}$ for the diagonal action of $G(\pi(v_{i}))$ on $X_{\pi(\overline{a_{i-1}})}\times X_{\pi(a_{i})}$, with indices taken modulo $l$, since $g$ is a translation of length $l$ along~$L$. Setting $a_{k}':=\pi(a_{k})$ for all $k\in\{0,\ldots,l\! -\! 1\}$ we see that $C:=(a_{i}',S_{i})_{i=0}^{l-1}$ is the desired translatable circuit.
	\end{proof}

    We now show that Propositions~\ref{prop:translatable_circuit_to_translation} and \ref{prop:translation_to_translatable_circuit} descend to one-to-one correspondences between orbits of translation axes as well as conjugacy classes of translations and certain equivalence classes of translatable circuits.

    There are two fundamental operations on translatable circuits which we use to generate equivalence relations: \emph{shifting} the start point, and \emph{concatenating} a translatable circuit with itself. Formally, let $\Delta=(\Gamma,(X_{a}),(G(v)))$ be a local action diagram and let $C_{\Delta}$ denote the set of translatable circuits of $\Delta$ of any length $l\in\mathbb{N}$. Given $C=(a_{i},S_{i})_{i=0}^{l-1}\in C_{\Delta}$, we declare for any $k\in\mathbb{Z}$ and any $n\in\mathbb{N}$ that
    \begin{displaymath}
        \smash{C\sim_{s} C_{k}:=(a_{i+k},S_{i+k})_{i=0}^{l-1}} \qquad\text{and}\qquad C\preceq_{c}C^{n}:=(a_{i},S_{i})_{i=0}^{nl-1},
    \end{displaymath}
    where indices are taken modulo $l$. We make several observations about these relations:
    \begin{itemize}
        \item the relation $\sim_{s}$ is an equivalence relation on $C_{\Delta}$,
        \item the relation $\preceq_{c}$ is a partial order, and
        \item the symmetric closure of $\preceq_{c}$ is an equivalence relation on $C_{\Delta}$.
    \end{itemize}
    We let $\sim$ denote the equivalence relation on $C_{\Delta}$ generated by $\sim_{s}$ and $\preceq_{c}$. We say that $C\in C_{\Delta}$ is \textbf{minimal} if it is minimal with respect to $\preceq_{c}$. It follows that every equivalence class in $C_{\Delta}/\!\sim$ admits a minimal representative, unique up to $\sim_{s}$.

    \vspace{0.2cm}

    Given a local action diagram $\Delta$ and a $\Delta$-tree $\mathbf{T}$, consider the group $G:=\mathbf{U}_{\mathbf{T}}(\Delta)$. We let $\mathrm{Hyp}(G)$ denote the set of translations in $G$ and let $\mathrm{Axes}(G)$ denote the corresponding set of translation axes. Note that $G$ acts on $\mathrm{Hyp}(G)$ by conjugation and on $\mathrm{Axes}(G)$ through its action on $\mathbf{T}$.   
    \begin{lemma}\label{lem:translations_conjugate}
        Let $T$ be a tree and $G\le\Aut(T)$ be $(P)$-closed. Any two translations in $G$ of the same length and in the same direction along an axis of $T$ are conjugate.
    \end{lemma}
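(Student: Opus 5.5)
Let $g,h\in G$ be translations of the same length $l$ along a common axis $L$, in the same direction. I will build a conjugator $x\in G$ with $xgx^{-1}=h$ vertex by vertex. Then I will check that $x$ lies in $G$ using Property ($P$): an automorphism of $T$ lies in the ($P$)-closed group $G$ as soon as its restriction to every ball $B(v,1)$ agrees with some element of $G$. Since $G$ is closed, this agreement is only needed on finite subsets of these balls.

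Write $L=(a_k,\overline{a_k})_{k\in\mathbb Z}$ with $v_k=o(a_k)$ and $k$ increasing in the direction of translation, so that $g(v_k)=h(v_k)=v_{k+l}$. I will require $x$ to fix $L$ pointwise, since then $xgx^{-1}$ and $h$ agree on $L$. Removing the edges of $L$ splits $T$ into the branches $T_k$ hanging off the vertices $v_k$, each branch containing $v_k$. Both $g$ and $h$ map $T_k$ onto $T_{k+l}$.

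Step 1 is to fix a fundamental domain. On $T_0\cup\dots\cup T_{l-1}$ set $x=\mathrm{id}$.

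Step 2 is to propagate by the conjugation equation $xg=hx$. For $m\in\mathbb Z$ and $i\in\{0,\dots,l-1\}$, define $x$ on $T_{i+ml}$ by
\[
x|_{T_{i+ml}} := h^{m}\circ g^{-m}|_{T_{i+ml}} .
\]
Because $h^m$ and $g^{-m}$ both agree on $L$, this is well defined on $v_{i+ml}$, which it fixes. It is also a bijection $T_{i+ml}\to T_{i+ml}$. The branches cover $T$ and meet only along $L$, so these pieces glue to an automorphism $x$ of $T$ fixing $L$. One then checks directly that $xg=hx$ on each branch: for $y\in T_{i+ml}$ we have $g(y)\in T_{i+(m+1)l}$, so
\[
x(g(y))=h^{m+1}g^{-m-1}g(y)=h\bigl(x(y)\bigr).
\]

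Step 3 is to show $x\in G$, and this is the main obstacle. Take a vertex $w$.

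If $w$ lies inside some branch $T_{i+ml}$ and is not on $L$, then $B(w,1)\subseteq T_{i+ml}$. There $x$ agrees with the element $h^mg^{-m}\in G$, so the local condition holds.

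If $w=v_{i+ml}\in L$, then $B(w,1)$ consists of the two neighbours $v_{i+ml\pm1}$ together with neighbours inside $T_{i+ml}$. On the branch part $x$ agrees with $h^mg^{-m}$. This element maps $v_{i+ml\pm1}$ to $h^m(v_{i\pm1})=v_{i+ml\pm1}$, so it agrees with $x$ there too. Hence $x|_{B(w,1)}=h^mg^{-m}|_{B(w,1)}$.

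So $x$ agrees locally with elements of $G$ on every ball of radius one. Property ($P$), in the form of the $(P_1)$-closure definition, then gives $x\in G$.

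The subtle point is that the local agreement at $v_{i+ml}$ uses that $h^mg^{-m}$ fixes $L$ pointwise. This is exactly where the hypotheses of equal length and equal direction enter.
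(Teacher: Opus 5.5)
Your proof is correct and is essentially the paper's argument. With the paper's $f,g$ playing the roles of your $g,h$, the paper builds the same conjugator: it is the identity on $T_0,\dots,T_{l-1}$ and is defined by the recursion $c|_{T_{m+l}}=gc_mf^{-1}$, which unrolls to your $h^mg^{-m}$ on $T_{i+ml}$. The only difference is that the paper realises each piece as an element of $\mathrm{rist}_G(T_k)$ and glues the pieces via $(P)$-closedness, whereas you glue first and then check the $(P_1)$-closure condition ball by ball, which is more explicit. Your closed formula also avoids a slip in the paper's displayed recursion for negative indices, which gives the inverse of the required piece.
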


    \begin{proof}
        Let $f,g\in G$ be translations of length $l\in\mathbb{N}$ along an axis $L$ of $T$ with vertices $(v_{k})_{k\in\mathbb{Z}}$. Further, let $\pi:VT\to VL$ denote the closest point projection and let $T_{k}$ denote the subtree of $T$ induced by $\pi^{-1}(v_{k})$. Given $k\in\{0,\ldots,l-1\}$, set $c_{k}:=\id\in\mathrm{rist}_{G}(T_{k})$. Since $G$ is $(P)$-closed we may inductively define elements $c_{k+nl}\in\mathrm{rist}_{G}(T_{k+nl})$ and $c_{k-nl}\in\mathrm{rist}_{G}(T_{k-nl})$ for $n\in\mathbb{N}$ such that
        \begin{displaymath}
            c_{k+nl}|_{T_{k+nl}}=gc_{k+(n-1)l}f^{-1} \quad\text{and}\quad c_{k-nl}|_{T_{k-nl}}=f^{-1}c_{k-(n-1)l}^{-1}g.
        \end{displaymath}
        Again, since $G$ is $(P)$-closed, we obtain an element $c\in G_{L}$ such that $c|_{T_{m}}=c_{m}$ for all $m\in\mathbb{Z}$. Notice that, by definition, $c|_{T_{m+l}}=c_{m+l}=gc_{m}f^{-1}$ for all $m\in\mathbb{Z}$. Thus
        \begin{displaymath}
            cfc^{-1}|_{T_{m}}=c_{m+l}fc_{m}^{-1}=gc_{m}f^{-1}fc_{m}^{-1}=g
        \end{displaymath}
        for all $m\in\mathbb{Z}$, showing that $f$ and $g$ are conjugate in $G$.
    \end{proof}
    \begin{theorem}\label{thm:axes_circuits_correspondence}
		Let $\Delta=(\Gamma,(X_{a}),(G(v)))$ be a local action diagram, $\mathbf T =(T, \pi, \mathcal{L})$ be a $\Delta$-tree and $G:=\mathbf{U}(\mathbf{T},(G(v)))$. Then there are mutually inverse bijections
        \begin{displaymath}
            \xymatrix@C=2.65cm{
            G\backslash\mathrm{Axes}(G) \ar@^{->}@<0.5ex>[r]^-{\text{Proposition~\ref{prop:translation_to_translatable_circuit}}} & C_{\Delta}/\!\sim \ar@^{->}@<0.5ex>[l]^-{\text{Proposition~\ref{prop:translatable_circuit_to_translation}}}
            }
            \hspace{0.15cm}\text{and}\hspace{0.15cm}
            \xymatrix@C=2.65cm{
            G\backslash\mathrm{Hyp}(G) \ar@^{->}@<0.5ex>[r]^-{\text{Proposition~\ref{prop:translation_to_translatable_circuit}}} & C_{\Delta}/\!\sim_{s}, \ar@^{->}@<0.5ex>[l]^-{\text{Proposition~\ref{prop:translatable_circuit_to_translation}}}
            }
        \end{displaymath}
        independent of the choices made in Propositions~\ref{prop:translatable_circuit_to_translation} and \ref{prop:translation_to_translatable_circuit}.
	\end{theorem}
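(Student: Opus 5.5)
We will establish the second bijection first, since it is the finer one, and then obtain the first by passing to coarser equivalence relations. Define $\Phi:\mathrm{Hyp}(G)\to C_{\Delta}/\!\sim_{s}$ by sending a translation $g$ of length $l$ along $L$ to the class of the circuit $C$ that Proposition~\ref{prop:translation_to_translatable_circuit} produces from a chosen base vertex $S\in VL$. The circuit is determined by $g$, $L$ and $S$, since $S_{i}$ is the $G(\pi(v_{i}))$-orbit of $(d_{i-1},c_{i})$. There are two points to check for well-definedness. If $S$ is replaced by $v_{k}$, the circuit is replaced by its shift $C_{k}$. If $g$ is replaced by $hgh^{-1}$, the axis becomes $hL$, and since $h$ preserves $\pi$ and acts at each vertex by an element of the local action, the labelled pairs $(d_{k-1},c_{k})$ at $hv_{k}$ lie in the same local orbits. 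Hence the same circuit arises from base point $hS$, so $\Phi$ is constant on conjugacy classes.

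Define $\Psi:C_{\Delta}/\!\sim_{s}\to G\backslash\mathrm{Hyp}(G)$ via Proposition~\ref{prop:translatable_circuit_to_translation}. Choose a cover $\widetilde C$, a start vertex $S\in S_{\widetilde C}$, and a translation $g$ of length $l$ along $L_{S,\widetilde C}$; send $C$ to the conjugacy class of $g$. Showing that this is independent of the choices is the main obstacle. We would proceed in three steps.
\begin{enumerate}[(a)]
\item Fix the line. Two translations of length $l$ in the same direction along $L_{S,\widetilde C}$ are conjugate by Lemma~\ref{lem:translations_conjugate}.
\item Change the cover and start vertex. Given two covers $\widetilde C,\widetilde C'$ of the same $C$ with start vertices $S,S'$, the two lines have vertices with the same $\pi$-images, and for each $k$ the pairs $(d_{k-1},c_{k})$ and $(d'_{k-1},c'_{k})$ lie in the common orbit $S_{k \bmod l}$. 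So for each $k$ there is $\tau_{k}\in G(\pi(v_{k}))$ mapping one pair to the other. Lemma~\ref{lem:universal_extension}, applied with the subtree $L_{S,\widetilde C}$ and the map $v_{k}\mapsto v'_{k}$, then gives $h\in G$ carrying the first line to the second, preserving the labelling and the direction. Conjugating by $h$ reduces this case to (a).
\item Shift the circuit. For a shift $C_{k}$, take the same line with start vertex $v_{k}$; the same translation $g$ arises.
\end{enumerate}

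Finally, $\Phi\circ\Psi=\id$ holds because reading off the circuit of the constructed translation at $S$ returns $C$. For $\Psi\circ\Phi=\id$, start with a translation $g$. The line, cover and start vertex supplied by Proposition~\ref{prop:translation_to_translatable_circuit} are valid choices in $\Psi$, and $g$ itself is an admissible translation along them, so $\Psi\Phi([g])=[g]$ by independence of choices. This gives the second bijection.

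For axes, send an axis $L$ to the $\sim$-class of the circuit of any translation along it. Translations along a fixed axis in the same direction have lengths $nl_{0}$, where $l_{0}$ is the minimal length. A translation of length $nl_{0}$ read at the same base point yields $C^{n}$, where $C$ is the circuit for length $l_{0}$. Conversely, if a translation $g'$ of length $l'$ exists, then $g'$ composed with the length-$l_{0}$ translation shows that the circuit for $l'$ is a power of the circuit for $\gcd(l_{0},l')$, which must equal $l_{0}$. The reverse direction gives the reversed circuit, and we would need to check that it is $\sim$-equivalent. If not, we would instead fix an orientation convention and note that $g^{-1}$ is a translation along the same axis, so the reversal is absorbed in the class. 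Moving $L$ by elements of $G$ preserves the class as before. Injectivity follows from (b), which produces an element of $G$ mapping one line to the other whenever the circuits agree up to $\sim_{s}$ and $\preceq_{c}$, taking common powers where necessary. Surjectivity follows from Proposition~\ref{prop:translatable_circuit_to_translation}.
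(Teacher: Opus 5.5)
Your argument for the second bijection is sound and is essentially the paper's. $\Phi$ is invariant under change of base point, which gives a shift, and under conjugation. $\Psi$ is independent of the choices because Lemma~\ref{lem:universal_extension} moves one labelled line onto another preserving direction, and Lemma~\ref{lem:translations_conjugate} then applies.

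The gap is in the axes bijection, where you handle the two directions of translation. If $g$ translates along $L$ with circuit $C$, then $g^{-1}$ translates along $L$ with a shift of $\overline{C}$ as its circuit. In general $\overline{C}\not\sim C$, so the claim that ``the reversal is absorbed in the class'' is false. No orientation convention can repair this, because with unoriented axes the correspondence itself fails.

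For a counterexample, let $\Gamma$ have one vertex $v$ and an orientable loop $a\neq\overline{a}$, with $|X_{a}|=1$ and $|X_{\overline{a}}|=2$. Let $G(v)$ act as $\Sym(X_{\overline{a}})$ and fix the point of $X_{a}$.
\begin{itemize}
\item Then $G=\Aut(T_{3})_{\omega}$ for an end $\omega$, and $G$ acts transitively on its axes, which are the lines with one end $\omega$.
\item On the other hand, $X_{a}\times X_{a}$ is a single diagonal orbit, so no translatable circuit has $\overline{a}$ followed by $a$.
\item Hence $C_{\Delta}/\!\sim$ has exactly two classes, those of $C=(a,X_{\overline{a}}\times X_{a})$ and of $\overline{C}$.
\end{itemize}
So $L\mapsto[C]$ is not well defined on unoriented axes. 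Equivalently, the map $C_{\Delta}/\!\sim\;\to G\backslash\mathrm{Axes}(G)$ induced by Proposition~\ref{prop:translatable_circuit_to_translation} is not injective.

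The correct reading is that $\mathrm{Axes}(G)$ consists of axes oriented by the direction of translation. The paper's proof uses this implicitly: it only compares translations ``in the same direction'' along $L$, and it transports a chosen $\varphi$ to $g\varphi g^{-1}$. Alternatively, you can keep unoriented axes and additionally identify $C$ with $\overline{C}$ in $C_{\Delta}$. With oriented axes, your remaining steps go through and match the paper's argument:
\begin{itemize}
\item lengths $nl_{0}$ give $C^{n}$;
\item changing the base point gives shifts;
\item transport by elements of $G$ preserves the class;
\item injectivity follows from (b) after passing to a common minimal representative.
\end{itemize}
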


    \begin{proof}
        Let $C=(a_{i},S_{i})_{i=0}^{l-1}\in C_{\Delta}$ be a translatable circuit of $\Delta$ of some length $l\in\mathbb{N}$. We first show that for any two covers $\smash{\widetilde{C}}$, $\smash{\widetilde{C}'}$ of $C$, and start points $\smash{S\in S_{\widetilde{C}}}$, $\smash{S'\in S_{\widetilde{C}'}}$, the two axes $\smash{L_{S,\widetilde{C}}}$, $\smash{L_{S',\widetilde{C}'}}$ of Proposition~\ref{prop:translatable_circuit_to_translation} lie in the same $G$-orbit and their associated translations are conjugate in $G$. Write $\smash{L_{S,\widetilde{C}}=\big(a_{k}},\overline{a_{k}}\big)_{k\in\mathbb{Z}}$ and set
        \begin{displaymath}
            v_{k}:=o\big(a_{k}\big),\quad c_{k}:=\mathcal{L}\big(a_{k}\big),\quad d_{k}:=\mathcal{L}\big(\overline{a_{k}}\big)
        \end{displaymath}
        for all $k\in\mathbb{Z}$, and similarly for $S'$ and $\smash{\widetilde{C}'}$. Since $\smash{\widetilde{C}}$ and $\smash{\widetilde{C}'}$ are both covers of $C$, and $C$ is translatable, there is, for every $k\in\mathbb{Z}$, an element $\smash{\tau_{k}\in G(\pi(v_{k}))=G(\pi(v_{k}'))}$ such that $\smash{\tau_{k}(d_{k-1},c_{k})=(d_{k-1}',c_{k}')}$. We may thus define an element $c\in G$ by setting
        \begin{displaymath}
            c(v_{k}):=v_{k}'\quad\text{and}\quad \sigma_{\mathcal{L}}(c,v_{k}):=\tau_{k},
        \end{displaymath}
        as well as extending $c$ to all of $T$ in an arbitrary compatible way using Lemma~\ref{lem:universal_extension}. Then we have $\smash{cL_{S,\widetilde{C}}=L_{S',\widetilde{C}'}}$ as desired. Moreover, denoting by $g,g'\in\mathrm{Hyp}(G)$ the translations associated to $\smash{L_{S,\widetilde{C}}, L_{S,\widetilde{C}'}}$ by Proposition~\ref{prop:translatable_circuit_to_translation}, we see that $cgc^{-1}$ and $g'$ are conjugate in $G$ by Lemma~\ref{lem:translations_conjugate}. Hence so are $g,g'$.

        \vspace{0.2cm}
        Now, let $C,C'\in C_{\Delta}$ be translatable circuits of $\Delta$ such that $C\sim C'$. Consider the unique minimal translatable circuit $M$ in the equivalence class of $C$ and $C'$ that starts with the same arc as $C$. Let $\smash{\widetilde{M}}=(d_{k-1},c_{k})_{k\in\mathbb{Z}}$ be a cover of $M$. Then $\smash{\widetilde{M}}$ is also a cover of $C$ and there exists an integer $m\in\mathbb{Z}$ such that $\smash{\widetilde{M}_{m}:=(d_{k-1+m},c_{k+m})_{k\in\mathbb{Z}}}$ is a cover of $C'$. Pick a start vertex $\smash{S\in S_{\widetilde{M}}}$ and write $\smash{L_{S,\widetilde{M}}}=(a_{k},\overline{a_{k}})_{k\in\mathbb{Z}}$. Then $S_{m}:=o(a_{m})$ is a start vertex for $\smash{\widetilde{M}_{m}}$ and we see that $\smash{L_{S,\widetilde{C}}}$ and $\smash{L_{S_{m},\widetilde{M}_{m}}}$ coincide as axes of $T$.

        Similarly, suppose $C,C'\in C_{\Delta}$ satisfy $C\sim_{s}C'$ and let $\smash{\widetilde{C}}$ be a cover of $C$. Then there exists an integer $m\in\mathbb{Z}$ such that $\smash{\widetilde{C}_{m}}$ is a cover of $C'$. Pick a start vertex $\smash{S\in S_{\widetilde{C}}}$ and write $\smash{L_{S,\widetilde{C}}=(a_{k},\overline{a_{k}})_{k\in\mathbb{Z}}}$. Then $S_{m}:=o(a_{m})$ is a start vertex for $\smash{\widetilde{C}_{m}}$. As before, $\smash{L_{S,\widetilde{C}}}$ and $\smash{L_{S_{m},\widetilde{C}_{m}}}$ coincide as axes of $T$ and hence the translations associated to these axes by Proposition~\ref{prop:translatable_circuit_to_translation} are conjugate in $G$ by Lemma~\ref{lem:translations_conjugate}.

        \vspace{0.2cm}
        Conversely, consider an axis $L\in\mathrm{Axes}(G)$. We first show that Proposition~\ref{prop:translation_to_translatable_circuit} associates, to any two translations $g,g'\in G$ of lengths $l,l'\in\mathbb{N}$ in the same direction along $L$ and start points $S,S'\in VL$, translatable circuits $C,C'$ such that $C\sim C'$. Write $L=(a_{k},\overline{a_{k}})_{k\in\mathbb{Z}}$ with $o(a_{0})=S$, where $k$ increases in the direction of translation of $g$ and $g'$. Let $m\in\mathbb{Z}$ be such that $S'=o(a_{m})$. For all $k\in\mathbb{Z}$, set $a_{k}':=\pi(a_{k})$. Let $d:=\gcd(l,l')\in\mathbb{N}$. By the Euclidean algorithm, there is a translation of length $d$ along $L$ in the same direction as $g$ and $g'$. Hence we obtain a translatable circuit $D:=(a_{i}',S_{i})_{i=0}^{d-1}$, where $S_{i}$ is the non-diagonal orbit of the diagonal action of $G(\pi(o(a_{i})))$ on $X_{\overline{a_{i-1}}'}\times X_{a_{i}'}$ containing the pairs $\{(\mathcal{L}(\overline{a_{i-1+kl}}),\mathcal{L}(a_{i+kl}))\mid k\in\mathbb{Z}\}$. By construction, $\smash{C=D^{l/d}}$ and $\smash{C'=(C^{l'/d})_{m}}$. In particular, $C\sim D\sim C'$.

        Similarly, given a translation $g\in G$ along an axis $L$ and start points $S,S'\in VL$ the associated translatable circuits $C,C'$ of Proposition~\ref{prop:translation_to_translatable_circuit} satisfy $C\sim_{s}C'$.

        \vspace{0.2cm}
        Now, let $L,L'\in\mathrm{Axes}(G)$ be translation axes of $G$ and suppose $L'=gL$ for some $g\in G$. Let $\varphi\in G$ be a translation of minimal length $l\in\mathbb{N}$ along $L$. Since $g$ conjugates translations along $L$ to translations along $L'$, we conclude that $\varphi':=g\varphi g^{-1}$ is a translation of minimal length along $L'$. Writing $L=(a_{k},\overline{a_{k}})$ with $S:=o(a_{0})$, where $k$ increases in the direction of translation of $\varphi$, put $S':=gS$ and $a_{k}':=ga_{k}$ as well as $\overline{a_{k}}':=g\overline{a_{k}}$ for all $k\in\mathbb{Z}$. Then $L'=(a_{k}',\overline{a_{k}}')_{k\in\mathbb{Z}}$ and $g(\overline{a_{k-1}},a_{k})=(\overline{a_{k-1}}',a_{k}')$ for all $k\in\mathbb{Z}$. Passing to the level of colours, this implies that Proposition~\ref{prop:translation_to_translatable_circuit} associates the same (minimal) translatable circuit to the data $(\varphi,L,S)$ and $(\varphi',L',S')$.

        Similarly, if $\varphi,\varphi'\in\mathrm{Hyp}(G)$ are conjugate translations along axes $L$ and $L'$ of $T$ then they have the same translation length and any conjugator $g\in G$ maps $L$ to $L'$. Defining $S$ and $S'$ as before, we see that Proposition~\ref{prop:translation_to_translatable_circuit} associates the same translatable circuit to $(\varphi,L,S)$ and $(\varphi',L',S')$.

        \vspace{0.2cm}
        Finally, the maps induced by Propositions~\ref{prop:translatable_circuit_to_translation} and \ref{prop:translation_to_translatable_circuit} are mutually inverse in either setting because the choices involved in either proposition become irrelevant at the level of orbits/conjugacy and equivalence classes of circuits by the above.
    \end{proof}

    \section{The Scale of Translations}\label{sec:scale_of_translations}

    We now provide a formula for the scale of a translation in a $(P)$-closed group $G\le\Aut(T)$ with compact vertex stabilisers. Note that by Lemma \ref{prop:permutation_topology_compact} a $(P)$-closed group has compact vertex stabilisers if and only if all its local actions have finite orbits only, an assumption that is satisfied for example when $T$ is locally finite. Given a local action diagram $\Delta=(\Gamma,(X_{a}),(G(v)))$ and a $\Delta$-tree $\mathbf{T}$, the group $G:=\mathbf{U}_{\mathbf{T}}(\Delta)$ thus has compact vertex stabilisers if and only if $X_{a}$ is finite for all $a\in A\Gamma$. We refer to such a diagram as a \textbf{local action diagram with finite orbits}. The formula we develop allows us to describe the set of all scale values of $G$ in terms of its local action diagram, considering that elements fixing a vertex and inversions have trivial scale under the above assumption, see Remark~\ref{rem:scale_why_translations}.
    
    We use the following tidy subgroups to determine the scale of translations.
	\begin{lemma}\label{lem:translation_tidy_subgroup}
        Let $G\leq\Aut(T)$ be ($P$)-closed with compact vertex stabilisers, $g \in G$ be a translation along a line $L$ of $T$, and $v\in VL$. Then $G_{v,gv}\le G$ is tidy for $g$. 
     \end{lemma}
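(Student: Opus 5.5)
Write $L=(a_{k},\overline{a_{k}})_{k\in\mathbb{Z}}$ with vertices $v_{k}:=o(a_{k})$, normalised so that $v=v_{0}$ and $g v_{k}=v_{k+l}$ for some $l\in\mathbb{N}$. Put $U:=G_{v_{0},v_{l}}$, the stabiliser of the segment $[v_{0},v_{l}]$. It is compact and open because vertex stabilisers are compact and $G$ is closed in the permutation topology. I will work with the automorphism $\alpha(x):=gxg^{-1}$. Then $g^{k}Ug^{-k}=G_{v_{kl},v_{(k+1)l}}$, so
\[U_{n}=G_{[v_{0},v_{(n+1)l}]},\quad U_{-n}=G_{[v_{-nl},v_{l}]},\quad U_{+}=G_{L_{\ge 0}},\quad U_{-}=G_{L_{\le l}},\]
where $L_{\ge 0}$ and $L_{\le l}$ are the half-lines of $L$ starting at $v_{0}$ and ending at $v_{l}$, and stabilisers are pointwise. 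If the paper's convention is $\alpha(x)=g^{-1}xg$, the two rays simply swap roles and the argument is unchanged.

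\emph{Tidy above.} I need $U=U_{+}U_{-}$. Take $u\in U$. Let $T_{+}$ be the subtree of vertices whose closest-point projection to $[v_{0},v_{l}]$ is $v_{l}$ and which lie beyond $v_{l}$ in the direction of $L_{\ge l}$; this is the component of $T\setminus\{v_{l}\}$ containing $v_{l+1}$, together with $v_{l}$. The element $u$ fixes $v_{l}$, and I will build $u_{-}\in G$ as follows:
\begin{itemize}
\item $u_{-}$ agrees with $u$ on the half-tree on the $v_{0}$ side of $v_{l}$, that is, on everything outside $T_{+}\setminus\{v_{l}\}$;
\item $u_{-}$ is the identity on $T_{+}$ except at $v_{l}$ itself.
\end{itemize}
This requires $u$ and $\mathrm{id}$ to be glued at $v_{l}$. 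Since the arc $(v_{l},v_{l-1})$ is fixed by $u$, the local action of $u$ at $v_{l}$ fixes the colour of that arc. Property $(P)$, or concretely Lemma~\ref{lem:universal_extension} applied in the form of Tits' independence at the edge $(v_{l},v_{l+1})$, then gives the splitting. Because $T$ is a tree, the independence property yields $G_{e}=G_{T_{e}^{1}}G_{T_{e}^{2}}$ for each edge $e$, where $T_{e}^{1},T_{e}^{2}$ are the two half-trees determined by $e$. Applying this at the edge $e=\{v_{l},v_{l+1}\}$, which $u$ need not fix, is the problem. The correct approach is to argue directly in terms of fixed rays:
\begin{itemize}
\item pick $u_{+}\in G$ that equals $u$ on the half-tree beyond $v_{0}$ away from $v_{-1}$ (this contains $v_{l}$ and everything forward) and equals the identity on the other half-tree;
\item this is legitimate by $(P)$ applied at the edge $\{v_{-1},v_{0}\}$, which may not be fixed by $u$;
\item so instead split at the \emph{vertex} $v_{0}$: construct $u_{+}$ agreeing with $u$ on the branches at $v_{0}$ other than the one containing $v_{-1}$, and with the identity on the branch containing $v_{-1}$.
\end{itemize}
This requires a local permutation in $G(\pi(v_{0}))$ that agrees with $\sigma(u,v_{0})$ on some colours and with the identity on others, which is not automatic. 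I therefore expect the clean route to be the following.

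\emph{Main step (the obstacle).} Show $U=U_{+}U_{-}$ by factoring $u=u_{+}u_{-}$ with $u_{+}\in G_{L_{\le l}}$ and $u_{-}\in G_{L_{\ge 0}}$. Since $u$ fixes $v_{0}$ and $v_{l}$, it fixes the whole segment between them. Choose $u_{1}\in G$ that agrees with $u$ on the half-tree $T^{\ge}$ containing $v_{l+1}$ determined by the edge $\{v_{0},v_{1}\}$, and is the identity on the other half-tree $T^{\le}$ containing $v_{0}$. This exists by property $(P)$ because $u$ fixes the edge $\{v_{0},v_{1}\}$; it is exactly Tits' independence, equivalently Lemma~\ref{lem:universal_extension} with $S$ the edge and local actions $\mathrm{id}$ at $v_{0}$ and $\sigma(u,v_{1})$ at $v_{1}$, extended accordingly. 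Then $u_{1}$ fixes $L_{\le 0}$ and the segment, so $u_{1}\in G_{L_{\le l}}=U_{-}$. Moreover $u_{1}^{-1}u$ is the identity on $T^{\ge}\supseteq L_{\ge 0}$, so $u_{1}^{-1}u\in U_{+}$. Hence $u\in U_{-}U_{+}$, and since $U$ is a group, $U=U_{+}U_{-}$. The point to check carefully is that the edge $\{v_{0},v_{1}\}$ lies inside the fixed segment; this is where $l\ge 1$ is used.

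\emph{Tidy below.} I need $U_{++}=\bigcup_{n}g^{n}U_{+}g^{-n}=\bigcup_{n}G_{L_{\ge nl}}$ to be closed. This set is the subgroup of $G$ fixing some ray in the end $\omega_{+}$ pointwise. By \cite{Wil94}, and the standard equivalent criterion, tidiness below follows once $\mathcal{L}_{U}=\{x\in G: x\text{ and }x^{-1}\text{-conjugation orbits bounded}\}\cap U$ is contained in $U_{+}\cap U_{-}$, or alternatively once $U_{++}\cap U=U_{+}$. I will verify the latter. Suppose $x\in U$ fixes $L_{\ge nl}$ for some $n$. Then $x$ fixes $v_{0}$ and $v_{nl}$, hence the segment between them, hence all of $L_{\ge 0}$, so $x\in U_{+}$. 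This gives closedness of $U_{++}$ via \cite[Lemma 3]{Wil94}, which states that $U_{++}\cap U=U_{+}$ implies $U_{++}$ is closed. With tidy above and tidy below both established, Theorem~\ref{thm:minimising_tidy} is not even needed: $U$ is tidy by definition.
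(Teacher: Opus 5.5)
Your proof is correct. Only the paragraph labelled \emph{Main step} and the tidy-below paragraph carry it, and the abandoned attempts before them can be deleted.

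For tidy above you use the same mechanism as the paper: independence at an edge of the pointwise-fixed segment $[v_0,v_l]$. The paper decomposes along the closest-point projection onto the segment, which amounts to cutting at the last edge $\{v_{l-1},v_l\}$ and gives $u=u_+u_-$ directly. You cut at the first edge $\{v_0,v_1\}$, obtain $u=u_1\cdot(u_1^{-1}u)\in U_-U_+$, and then invert. Two small repairs are needed.
\begin{enumerate}[(i)]
\item $T^{\ge}$ contains $L_{\ge 1}$, not $L_{\ge 0}$. The conclusion $u_1^{-1}u\in U_+$ still holds, because $u_1^{-1}u$ fixes $v_0$: both $u$ and $u_1$ do.
\item Lemma~\ref{lem:universal_extension} extends arbitrarily off $S$, so it does not by itself produce $u_1$. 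Instead, define $u_1$ to be $u$ on $T^{\ge}$ and the identity on $T^{\le}$. Then $u_1\in G^{(P)}=G$, since every ball $B(w,1)$ lies in $T^{\ge}\cup\{v_0\}$ or in $T^{\le}\cup\{v_1\}$, where $u_1$ agrees with $u$ or with $\mathrm{id}$ respectively.
\end{enumerate}

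For tidy below your route genuinely differs from the paper's. The paper identifies $U_{++}$ as the set of elements fixing the attracting end $\xi$ and some vertex. It then shows $G\setminus U_{++}$ is open by distinguishing whether $h$ moves $\xi$ or fixes no vertex. You only verify $U_{++}\cap U=U_+$, using that an automorphism fixing $v_0$ and $v_{nl}$ fixes the geodesic between them, and then deduce closedness.

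That deduction needs no citation. Let $H$ be a subgroup and $U$ an open subgroup with $H\cap U$ closed. Then $\overline{H}\cap U\subseteq\overline{H\cap U}=H\cap U$. For $x\in\overline{H}$, any $h\in H\cap xU$ gives $h^{-1}x\in\overline{H}\cap U\subseteq H$, so $x\in H$.

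Your argument is shorter and avoids classifying the elements outside $U_{++}$; the paper's argument gives an explicit geometric description of $U_{++}$. Your sentence about $\mathcal{L}_U$ is garbled, but since you never use it, it can simply be removed.
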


	\begin{proof}
        By assumption, the group $U:=G_{v,gv}\le G$ is compact open. Furthermore,
        \begin{displaymath}
            U_{+}=\bigcap_{k=0}^{\infty}g^{k}Ug^{-k}=G_{v,gv,g^{2}v,\ldots} \quad\text{and}\quad U_{-}=\bigcap_{k=0}^{\infty}g^{-k}Ug^{k}=G_{\ldots,g^{-1}v,v,gv}.
        \end{displaymath}
        Let $u\in U$. Further, let $S$ denote the shortest path from $v$ to $gv$ and let $\pi:VT\to VS$ be the closest point projection. Since $G$ is ($P$)-closed, there is an element $u_{+}\in U_{+}$ which agrees with $u$ on $\pi^{-1}(w)$ for all $w\in VS\backslash\{gv\}$ and fixes $\pi^{-1}(gv)$ pointwise. Similarly, there is an element $u_{-}\in U_{-}$ which agrees with $u$ on $\pi^{-1}(gv)$ and fixes $\pi^{-1}(w)$ pointwise for all $w\in VS\backslash\{gv\}$. By construction, $u=u_{+}u_{-}$ and therefore $U=U_{+}U_{-}$, so $U$ is tidy above for $g$. Now consider
        \begin{displaymath}
            U_{++}=\bigcup_{k=0}^{\infty}g^{k}U_{+}g^{-k}=\bigcup_{k=0}^{\infty}G_{g^{k}v,g^{k+1}(v),\ldots}.
        \end{displaymath}
        That is, $U_{++}$ consists of those elements of $G$ that both fix the end $\xi$ towards which $g$ translates as well as a vertex of $T$. To show that $U_{++}$ is closed, let $h\in G\backslash U_{++}$. Either $h$ does not fix $\xi$, in which case there is an arc $a\in AT$ oriented towards $\xi$ such that $h\cdot a$ is not oriented towards $\xi$, or there is an arc $a\in AT$ which is either inverted or translated by $h$. In any of these cases, the set $hG_{a}$ is an open subset of $G$ contained in $G\backslash U_{++}$.
	\end{proof}

    The following permutation group lemma is used repeatedly in this section.

    \begin{lemma}\label{lem:permutation_group_orbits}
        Let $G\le\Sym(\Omega)$ be a permutation group and let $X,Y\subseteq\Omega$ be orbits of $G$. If $(x,y),(x',y')\in X\times Y$ lie in the same orbit of the diagonal action of $G$ on $X\times Y$ then there is a bijection between $G_{x}\cdot y$ and $G_{x'}\cdot y'$.
    \end{lemma}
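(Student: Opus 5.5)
The plan is to use an element of $G$ that carries $(x,y)$ to $(x',y')$ and show that it transports $G_{x}\cdot y$ onto $G_{x'}\cdot y'$. By hypothesis there is $g\in G$ with $g(x,y)=(x',y')$, that is, $gx=x'$ and $gy=y'$. The candidate bijection is the restriction of $g$, viewed as a permutation of $\Omega$, to the set $G_{x}\cdot y$:
\begin{displaymath}
    \phi: G_{x}\cdot y\to G_{x'}\cdot y',\quad z\mapsto gz.
\end{displaymath}

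The key step is the standard identity $gG_{x}g^{-1}=G_{gx}=G_{x'}$. To check it, note that $h\in G_{x}$ implies $ghg^{-1}x'=ghx=gx=x'$. Applying the same argument with $g^{-1}$ in place of $g$ gives the reverse inclusion.

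Using this identity, for $h\in G_{x}$ we compute $g(hy)=(ghg^{-1})(gy)=(ghg^{-1})y'\in G_{x'}\cdot y'$. Hence $\phi$ is well defined with image in $G_{x'}\cdot y'$. It is injective because $g$ is a bijection of $\Omega$. For surjectivity, any element of $G_{x'}\cdot y'$ has the form $h'y'$ with $h'=ghg^{-1}$ for some $h\in G_{x}$, and then $h'y'=g(hy)=\phi(hy)$.

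There is no genuine obstacle here. The only point needing care is the conjugation identity for stabilisers, and it is immediate. Note also that the hypothesis that $X$ and $Y$ are orbits plays no role beyond ensuring that the diagonal action on $X\times Y$ makes sense.
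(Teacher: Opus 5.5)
Your proof is correct and follows essentially the same route as the paper: transport $G_{x}\cdot y$ by an element $g$ with $g(x,y)=(x',y')$, using $g(hy)=(ghg^{-1})(gy)$. The only difference is that the paper gets surjectivity by noting that $g^{-1}$ induces the inverse map, whereas you obtain it from the identity $gG_{x}g^{-1}=G_{x'}$; these amount to the same argument.
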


    \begin{proof}
        Suppose that $g(x,y)=(gx,gy)=(x',y')$ for some $g\in G$. Then $g$ induces a bijection from $G_{x}\cdot y$ to $G_{x'}\cdot y'$ whose inverse is induced by $g^{-1}\in G$: indeed, given an element $h\in G_{x}$, we have $g(h(y))=ghg^{-1}(gy)=ghg^{-1}(y')\in G_{x'}\cdot y'$.
    \end{proof}

	\begin{proposition}\label{prop:translation_scale}
		Let $\Delta\!=\!(\Gamma,(X_{a}),(G(v)))$ be a local action diagram with finite orbits, $\mathbf{T}\!=\!(T,\pi,\mathcal{L})$ be a $\Delta$-tree, and $G:=\mathbf{U}(\mathbf{T},(G(v)))$. If~$g\in G$ is a translation of length $l\in\mathbb{N}$ along a line $L=(a_{k},\overline{a_{k}})_{k\in\mathbb{Z}}$ in $T$ then
        \begin{displaymath}
            s(g)=\prod_{i=1}^{l}\abs*{G(\pi(o(a_{i})))_{\mathcal{L}(a_{i})} \cdot \mathcal L(\overline{a_{i-1}})}.
        \end{displaymath}
	\end{proposition}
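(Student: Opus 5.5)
By Lemma~\ref{lem:translation_tidy_subgroup} the compact open subgroup $U:=G_{v,gv}$ with $v:=o(a_{0})$ is tidy for $g$. Theorem~\ref{thm:minimising_tidy} then gives $s(g)=[gUg^{-1}:gUg^{-1}\cap U]$. Since $gUg^{-1}=G_{gv,g^{2}v}$, the plan is to compute
\begin{displaymath}
s(g)=[G_{gv,g^{2}v}:G_{v,gv,g^{2}v}]=\abs*{G_{gv,g^{2}v}\cdot v},
\end{displaymath}
where the second equality is the orbit--stabiliser theorem. Write $v_{k}:=o(a_{k})$, so that $gv=v_{l}$ and $g^{2}v=v_{2l}$. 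Any element of $G_{v_{l},v_{2l}}$ fixes the geodesic segment between them, that is, $v_{l},\ldots,v_{2l}$. It therefore suffices to count the orbit of $v_{0}$ under the pointwise stabiliser $H:=G_{v_{l},\ldots,v_{2l}}$.

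The count goes inductively backwards along $L$. Set $H_{j}:=G_{v_{j},v_{j+1},\ldots,v_{2l}}$ for $0\le j\le l$. Then $H_{0}\le H_{1}\le\cdots\le H_{l}=H$, and by orbit--stabiliser $\abs{H\cdot v_{0}}=[H_{l}:H_{0}]=\prod_{j=1}^{l}[H_{j}:H_{j-1}]=\prod_{j=1}^{l}\abs{H_{j}\cdot v_{j-1}}$.

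The key step is to show that $\abs{H_{j}\cdot v_{j-1}}=\abs{G(\pi(v_{j}))_{\mathcal{L}(a_{j})}\cdot\mathcal{L}(\overline{a_{j-1}})}$. Note that $o(a_{j})=v_{j}$, $t(a_{j})=v_{j+1}$, and $\overline{a_{j-1}}$ is the arc from $v_{j}$ to $v_{j-1}$. The neighbours of $v_{j}$ correspond bijectively via $\mathcal{L}_{v_{j}}$ to $X_{\pi(v_{j})}$, and an element $h\in H_{j}$ moves $v_{j-1}$ to the neighbour whose colour is $\sigma_{\mathcal{L}}(h,v_{j})(\mathcal{L}(\overline{a_{j-1}}))$. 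Because $h$ fixes $v_{j+1}$, its local action $\sigma_{\mathcal{L}}(h,v_{j})$ fixes $\mathcal{L}(a_{j})$, which gives the inclusion ``$\subseteq$''.

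For ``$\supseteq$'', I take $\tau\in G(\pi(v_{j}))_{\mathcal{L}(a_{j})}$ and apply Lemma~\ref{lem:universal_extension} to the subtree spanned by $v_{j},\ldots,v_{2l}$ with $g$ the identity there. I prescribe local permutation $\tau$ at $v_{j}$ and $\id$ at the remaining vertices. The compatibility condition on the arcs $a_{j},\ldots,a_{2l-1}$ and their reverses holds: $\tau$ fixes $\mathcal{L}(a_{j})$, and the identity handles the rest. The resulting element lies in $H_{j}$ and realises $\tau$ at $v_{j}$.

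Finally, $\pi(v_{j})=\pi(o(a_{j}))$, which gives the stated product. One detail is that the indexing $i=1,\ldots,l$ in the statement matches $j=1,\ldots,l$ here. The cyclic shift is harmless anyway by Lemma~\ref{lem:permutation_group_orbits}, since $g$ maps $(\overline{a_{i-1}},a_{i})$ to $(\overline{a_{i-1+l}},a_{i+l})$ and so the factors are $l$-periodic. I expect the main obstacle to be this bookkeeping: checking that restricting to the segment, rather than to the whole stabiliser of two vertices, loses nothing; that the extension lemma applies with the prescribed permutations; and that the indices and direction of the orbit (colours of $\overline{a_{j-1}}$ under the stabiliser of the colour of $a_{j}$) come out as in the formula.
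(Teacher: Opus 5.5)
Your proposal is correct and follows essentially the same route as the paper: the tidy subgroup $G_{v_0,v_l}$ reduces $s(g)$ to $|G_{v_l,v_{2l}}\cdot v_0|$, and the segment is then peeled back one vertex at a time, each step contributing the local orbit size at $v_j$. Your tower of pointwise stabilisers $H_j$ with multiplicativity of indices is a cleaner packaging of the paper's fibre count over $W\cdot v_1$: it gets equal fibre sizes for free, where the paper cites Lemma~\ref{lem:permutation_group_orbits}. Likewise, your explicit appeal to Lemma~\ref{lem:universal_extension} spells out what the paper summarises as ``since $G$ is ($P$)-closed''.
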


	\begin{proof}
         Let $v_{k}:=o(a_{k})\in VT$ ($k\in\mathbb{Z}$). The subgroup $U:=G_{v_{0},gv_{0}}=G_{v_{0},v_{l}}$ of $G$ is tidy for $g\in G$ by Lemma~\ref{lem:translation_tidy_subgroup}. Hence, by the orbit-stabiliser theorem,
        \begin{displaymath}
            s(g) = \ind{gUg^{-1}}{gUg^{-1}\cap U}=\ind{G_{v_{l},v_{2l}}}{(G_{v_{l},v_{2l}})_{v_{0}}}=\abs*{G_{v_{l},v_{2l}}\cdot v_{0}}.
        \end{displaymath}
        We abbreviate $W:=G_{v_{l},v_{2l}}$. Since $G$ is ($P$)-closed, we deduce using Lemma~\ref{lem:permutation_group_orbits} that
        \begin{displaymath}
            \abs*{W\cdot v_{0}}=\sum_{\substack{v\in W\cdot v_{1} \\ v=hv_{1}}}\abs*{G(\pi(v))_{\mathcal{L}(ha_{1})}\cdot\mathcal{L}(h\overline{a_{0}})}=\abs*{G(\pi(o(a_{1})))_{\mathcal{L}(a_{1})}\cdot\mathcal{L}(\overline{a_{0}})}\cdot\abs*{W\cdot v_{1}},
        \end{displaymath}
        as illustrated by Figure~\ref{fig:scale_translation}. Iterating this argument implies the assertion.
	\end{proof}

    \begin{figure}[ht]
        \begin{tikzpicture}[xscale=1.6,yscale=1.2,
            mid arrow right/.style={
                postaction={decorate},
                decoration={
                    markings,
                    mark=at position 0.5 with {\arrow{>}}
                }
            },
            mid arrow left/.style={
                postaction={decorate},
                decoration={
                    markings,
                    mark=at position 0.5 with {\arrow{<}}
                }
            }
        ]
			\node[fill=black, circle, inner sep=1pt, draw, label=315:$v_{0}$] (v0) at (0,0) {};
			\node[fill=black, circle, inner sep=1pt, draw, label=315:$v_{1}$] (v1) at (1,0) {};
			\node[fill=black, circle, inner sep=1pt, draw, label=315:$v_{2}$] (v2) at (2,0) {};
            \node[fill=black, circle, inner sep=1pt, draw, label=below:$v_{l}$] (vl) at (4,0) {};
            \node[fill=black, circle, inner sep=1pt, draw, label=below:$v_{2l}$] (v2l) at (5,0) {};

            \draw (-0.25,0) -- (v0);
			\draw[mid arrow left] (v0) -- (v1) node[midway, above] {$\overline{a_{0}}$};
			\draw[mid arrow right] (v1) -- (v2) node[midway, above] {$a_{1}$};
            \draw (v2) -- (2.5,0);
			\draw[dashed] (2.5,0) -- (3.5,0);
			\draw (3.5,0) -- (vl) -- (4.25,0);
			\draw[dashed] (4.25,0) -- (4.75,0);
            \draw (4.75,0) -- (v2l) -- (5.25,0);
            \node (L) at (5.5,0) {$L$};

            \path (4,0) ++(163:4) node[fill=black, circle, inner sep=1pt, draw] (hv01) {};
            \path (4,0) ++(155:4) node[fill=black, circle, inner sep=1pt, draw] (hv02) {};
            \path (4,0) ++(147:4) node[fill=black, circle, inner sep=1pt, draw] (hv03) {};
            \path (4,0) ++(155:3) node[fill=black, circle, inner sep=1pt, draw] (v) {};
            \node at ($(v)+(-75:8pt)$) {$v$};
            \path (4,0) ++(155:2) node[fill=black, circle, inner sep=1pt, draw] (hv2) {};
            \node at ($(hv2)+(-70:10pt)$) {$hv_{2}$};
            \path (4,0) ++(160:1.5) node (hv2p5) {};
            \path (4,0) ++(140:0.5) node (hvlm0p5) {};

            \draw[mid arrow left, dashed] (hv01) -- (v);
            \draw[mid arrow left, dashed] (hv02) -- (v);
            \draw[mid arrow left] (hv03) -- (v) node[midway, above right, xshift=-5pt, yshift=1pt] {$h\overline{a_{0}}$};
            \draw[mid arrow right] (v) -- (hv2) node[midway, above right, xshift=-5pt, yshift=1pt] {$ha_{1}$};
            \draw (hv2) -- (hv2p5.center);
            \draw[dashed] (hv2p5.center) -- (hvlm0p5.center);
            \draw (hvlm0p5.center) -- (vl);

            \draw[dotted] (vl) ++(140:4) arc[start angle=140, end angle=220, radius=4];
            \node[right] at ($(vl)+(220:4)$) {$W\cdot v_{0}$};
            \draw[dotted] (vl) ++(140:3) arc[start angle=140, end angle=220, radius=3];
            \node[right] at ($(vl)+(220:3)$) {$W\cdot v_{1}$};
            \draw[dotted] (vl) ++(140:2) arc[start angle=140, end angle=220, radius=2];
			
            \path (4,0) ++(197:4) node[fill=black, circle, inner sep=1pt, draw] (h'v02) {};
            \path (4,0) ++(205:4) node[fill=black, circle, inner sep=1pt, draw] (h'v03) {};
            \path (4,0) ++(200:3) node[fill=black, circle, inner sep=1pt, draw] (v') {};
            \node at ($(v')+(75:8pt)$) {$v'$};
            \path (4,0) ++(205:2) node[fill=black, circle, inner sep=1pt, draw] (h'v2) {};
            \node at ($(h'v2)+(45:8pt)$) {$h'v_{2}$};
            \path (4,0) ++(225:1.5) node (h'v2p5) {};
            \path (4,0) ++(215:0.5) node (h'vlm0p5) {};

            \draw[mid arrow left, dashed] (h'v02) -- (v');
            \draw[mid arrow left] (h'v03) -- (v') node[midway, below right, xshift=-5pt, yshift=-1pt] {$h'\overline{a_{0}}$};
            \draw[mid arrow right] (v') -- (h'v2) node[midway, below, xshift=5pt, yshift=-1pt] {$h'a_{1}$};
            \draw (h'v2) -- (h'v2p5.center);
            \draw[dashed] (h'v2p5.center) -- (h'vlm0p5.center);
            \draw (h'vlm0p5.center) -- (vl);
		\end{tikzpicture}
        \caption{Computing the scale of a translation in a ($P$)-closed group.}
        \label{fig:scale_translation}
    \end{figure}

    By Propositions~\ref{prop:translatable_circuit_to_translation} and \ref{prop:translation_to_translatable_circuit}, all translations of ($P$)-closed groups arise from translatable circuits of the underlying local action diagram. We first show that all translations associated to a given circuit by Proposition~\ref{prop:translatable_circuit_to_translation} have the same scale.

    \begin{corollary}\label{cor:scale_translatable_circuit}
        Let $\Delta\!=\!(\Gamma,(X_{a}),(G(v)))$ be a local action diagram with finite orbits, $\mathbf{T}$ be a $\Delta$-tree, and $G:=\mathbf{U}_{\mathbf{T}}(\Delta)$. Further, let $C=(a_{i},S_{i})_{i=0}^{l}$  ($l\in\mathbb{N}$) be a translatable circuit of $\Delta$. 
        Then any two translations of $G$ associated to $C$ via Proposition~\ref{prop:translatable_circuit_to_translation} have the same scale. For any cover $\smash{\widetilde{C}=(d_{k-1},c_{k})_{k\in\mathbb{Z}}}$ of $C$, we have
        \begin{displaymath}
            s(g)=\prod_{i=1}^{l}\abs*{G(o(a_{i}))_{c_{i}}\cdot d_{i-1}}.
        \end{displaymath}
    \end{corollary}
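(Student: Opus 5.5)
The plan is to reduce the corollary directly to Proposition~\ref{prop:translation_scale}. Let $g$ be a translation associated to $C$ via Proposition~\ref{prop:translatable_circuit_to_translation}. Then $g$ arises from some cover $\smash{\widetilde{C}'=(d'_{k-1},c'_{k})_{k\in\mathbb{Z}}}$ of $C$ and a start vertex $S\in S_{\widetilde{C}'}$. It translates by length $l$ along $L_{S,\widetilde{C}'}=(b_{k},\overline{b_{k}})_{k\in\mathbb{Z}}$, where $\mathcal{L}(b_{k})=c'_{k}$, $\mathcal{L}(\overline{b_{k}})=d'_{k}$ and $\pi(b_{k})=a_{k\bmod l}$. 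Since $\pi(o(b_{i}))=o(a_{i})$, Proposition~\ref{prop:translation_scale} gives
\begin{displaymath}
s(g)=\prod_{i=1}^{l}\abs*{G(o(a_{i}))_{c'_{i}}\cdot d'_{i-1}}.
\end{displaymath}
Note that $G(o(a_i))_{c'_i}\cdot d'_{i-1}$ is the set of $y$ with $(y,c'_i)$ in the diagonal orbit of $(d'_{i-1},c'_i)$. It is therefore a ``fibre'' of the orbit $S_{i}$.

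It remains to show that this product does not depend on the cover, so that it equals the displayed formula for an arbitrary cover $\widetilde{C}$. For each $i$, the pairs $(d'_{i-1},c'_{i})$ and $(d_{i-1},c_{i})$ both lie in $S_{i\bmod l}$, which is a single orbit of the diagonal action of $G(o(a_{i}))$ on $X_{\overline{a_{i-1}}}\times X_{a_{i}}$. I will apply Lemma~\ref{lem:permutation_group_orbits} with the roles of the two coordinates swapped, so that the stabiliser is taken at the $X_{a_{i}}$-coordinate. The lemma is symmetric in $X$ and $Y$, so it yields a bijection $G(o(a_{i}))_{c'_{i}}\cdot d'_{i-1}\to G(o(a_{i}))_{c_{i}}\cdot d_{i-1}$. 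The factors therefore agree term by term, which gives the formula. Since the right-hand side depends only on $C$, any two translations associated to $C$ have the same scale.

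The only real subtlety is the bookkeeping of indices. The circuit is written as $(a_i,S_i)_{i=0}^{l}$, but the product runs over $i=1,\dots,l$, so indices must be read modulo $l$. In particular, the $i=l$ factor uses $S_{0}$ and the vertex $o(a_{0})=o(a_{l})$, which is consistent because the cover is $l$-periodic in orbits. I also need to check that the line in Proposition~\ref{prop:translation_scale} is indexed in the direction of translation, so that $a_1$ follows $a_0$. This holds because the construction in Proposition~\ref{prop:translatable_circuit_to_translation} sends $v_k$ to $v_{k+l}$. Alternatively, the independence statement follows from Theorem~\ref{thm:axes_circuits_correspondence}: all associated translations are conjugate, and the scale is conjugation invariant. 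I would mention this as a remark, but the direct orbit computation already gives both claims.
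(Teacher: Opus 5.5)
Your proposal is correct and is essentially the paper's proof. Like the paper, you apply Proposition~\ref{prop:translation_scale} to the translation built from a given cover, then use Lemma~\ref{lem:permutation_group_orbits} on each orbit $S_{i}$ (with the stabiliser taken at the $X_{a_{i}}$-coordinate) to show that the factors $\abs*{G(o(a_{i}))_{c_{i}}\cdot d_{i-1}}$ do not depend on the chosen cover; your checks on indexing modulo $l$ and on the direction of translation are details the paper leaves implicit.
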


    \begin{proof}
        Consider two covers $\smash{\widetilde{C}}=(d_{k-1},c_{k})_{k\in\mathbb{Z}}$ and $\smash{\widetilde{C}'=(d_{k-1}',c_{k}')_{k\in\mathbb{Z}}}$ of $C$, and let $\smash{S\in S_{\widetilde{C}}}$ as well as $S'\in S_{\widetilde{C}'}$. Let $g$ and $g'$ denote the translations along $\smash{L_{S,\widetilde{C}}}$ and $\smash{L_{S',\widetilde{C}'}}$ associated to this data by Proposition~\ref{prop:translatable_circuit_to_translation}. By Proposition~\ref{prop:translation_scale} and Lemma~\ref{lem:permutation_group_orbits},
        \begin{displaymath}
            s(g)=\prod_{i=1}^{l}\abs*{G(o(a_{i}))_{c_{i}} \cdot d_{i-1}}=\prod_{i=1}^{l}\abs*{G(o(a_{i}))_{c_{i}'} \cdot d_{i-1}'}=s(g'). \qedhere
        \end{displaymath}
    \end{proof}

    Given Corollary~\ref{cor:scale_translatable_circuit}, we write $s(C)$ for the scale of all translations associated to a translatable circuit $C$ of a local action diagram $\Delta$ by Proposition~\ref{prop:translatable_circuit_to_translation}. These scale values behave as follows with respect to the shift and concatenation relation on $C_{\Delta}$.

    \begin{corollary}\label{cor:scale_under_relations}
        Let $\Delta$ be a local action diagram with finite orbits, $\mathbf{T}$ be a $\Delta$-tree and $G:=\mathbf{U}_{\mathbf{T}}(\Delta)$. Further, let $C$ be a translatable circuit of length $l\in\mathbb{N}$ of $\Delta$.
        \begin{enumerate}[(i)]
            \item\label{item:shift} For $k\in\mathbb{Z}$ we have $s(C_{k})=s(C)$.
            \item\label{item:concatenation} For $n\in\mathbb{N}$ we have $s(C^{n})=s(C)^{n}$.
        \end{enumerate}
    \end{corollary}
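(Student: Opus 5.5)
Both parts follow from the explicit formula of Corollary~\ref{cor:scale_translatable_circuit}. The plan is to realise $C_{k}$ and $C^{n}$ by covers obtained from a cover of $C$, and then compare the resulting products. Fix a cover $\widetilde{C}=(d_{k-1},c_{k})_{k\in\mathbb{Z}}$ of $C=(a_{i},S_{i})_{i=0}^{l-1}$. By Corollary~\ref{cor:scale_translatable_circuit},
\begin{displaymath}
s(C)=\prod_{i=1}^{l}\abs*{G(o(a_{i}))_{c_{i}}\cdot d_{i-1}},
\end{displaymath}
where the indices of the arcs are read modulo $l$.

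The key observation is that the factor $f_{i}:=\abs{G(o(a_{i}))_{c_{i}}\cdot d_{i-1}}$ depends only on the orbit $S_{i \bmod l}$ and not on the particular pair $(d_{i-1},c_{i})$. Indeed, if $(d,c),(d',c')\in S_{i}$, then Lemma~\ref{lem:permutation_group_orbits}, applied with the roles of the two coordinates exchanged (the lemma is symmetric in $X$ and $Y$), gives a bijection $G(o(a_{i}))_{c}\cdot d\to G(o(a_{i}))_{c'}\cdot d'$. So we may write $f(S_{j})$ for this common value. Because $(d_{k-1},c_{k})\in S_{k \bmod l}$ for every $k$, the sequence $k\mapsto f_{k}$ is $l$-periodic.

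For (i), the shifted sequence $\widetilde{C}_{k}:=(d_{j-1+k},c_{j+k})_{j\in\mathbb{Z}}$ is a cover of $C_{k}$. Applying Corollary~\ref{cor:scale_translatable_circuit} to $C_{k}$ with this cover gives
\begin{displaymath}
s(C_{k})=\prod_{j=1}^{l}f(S_{j+k}).
\end{displaymath}
This is the product of $f$ over a complete residue system modulo $l$, so it equals $\prod_{i=1}^{l}f(S_{i})=s(C)$.

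For (ii), $\widetilde{C}$ is itself a cover of $C^{n}$, since the $j$-th entry of $C^{n}$ is $(a_{j \bmod l},S_{j \bmod l})$. Applying the corollary to $C^{n}$, which has length $nl$, gives
\begin{displaymath}
s(C^{n})=\prod_{j=1}^{nl}f(S_{j \bmod l}).
\end{displaymath}
By periodicity this product consists of $n$ copies of $\prod_{i=1}^{l}f(S_{i})$, so it equals $s(C)^{n}$.

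As a sanity check for (ii), $C^{n}$ corresponds to $g^{n}$, and $s(g^{n})=s(g)^{n}$ is a standard fact of Willis theory.

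The only real subtlety is the bookkeeping of indices. The formula in Corollary~\ref{cor:scale_translatable_circuit} is indexed from $1$ to $l$ while the circuits are indexed from $0$ to $l-1$, so we must check that the factor attached to index $i$ is always governed by $S_{i \bmod l}$ and the arc $a_{i \bmod l}$. Once that is confirmed, both statements are purely combinatorial.
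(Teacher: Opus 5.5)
Your proof is correct and takes the same route as the paper. Both arguments express $s(C_k)$ and $s(C^n)$ through the product formula of Corollary~\ref{cor:scale_translatable_circuit}, and use Lemma~\ref{lem:permutation_group_orbits} to see that each factor depends only on the orbit $S_{i \bmod l}$, so the factors match up cyclically for the shift and $n$-fold for the concatenation. Your explicit choice of covers ($\widetilde{C}_k$ for $C_k$, and $\widetilde{C}$ itself for $C^n$) simply spells out the index bookkeeping that the paper leaves implicit; the paper also miscounts the factors for $C^n$ as $l^n$ rather than $nl$.
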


    \begin{proof}
        For part~\ref{item:shift}, pick translations $g$ and $g'$ representing $C$ and $C_{k}$ respectively. Then use Lemma~\ref{lem:permutation_group_orbits} to match up the factors in the products for $s(g)$ and $s(g')$ given by Corollary~\ref{cor:scale_translatable_circuit}. Similarly, for part~\ref{item:concatenation}, let $g$ and $g'$ be translations representing $C$ and $C^{n}$ respectively. Using Lemma~\ref{lem:permutation_group_orbits}, we see that the $l^{n}$ factors in the product for $s(g')$ given by Corollary~\ref{cor:scale_translatable_circuit} coincide with the factors in the product $s(g)^{n}$.
    \end{proof}

    Corollaries~\ref{cor:scale_translatable_circuit} and \ref{cor:scale_under_relations} allow us to describe the set of all scale values of translations of $\mathrm{U}_{\mathbf{T}}(\Delta)$ purely in terms of minimal translatable circuits of $\Delta$.

    \begin{theorem}\label{thm:scale_values}
         Let $\Delta$ be a local action diagram with finite orbits, $\mathbf{T}$ be a $\Delta$-tree and $G:=\mathbf{U}_{\mathbf{T}}(\Delta)$. Furthermore, let $\{C_{i}\mid i\in I\}\subseteq C_{\Delta}$ be a collection of minimal translatable circuits representing $C_{\Delta}/\!\sim$. Then
         \begin{displaymath}
             s(G)=\{s(C)\mid C\in C_{\Delta}\}=\{s(C_{i})^{n}\mid i\in I,\ n\in\mathbb{N}\}.
         \end{displaymath}
    \end{theorem}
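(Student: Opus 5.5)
The proof has two equalities, and I will prove each separately. Throughout, $s(G)$ denotes the set of scale values $s(g)$ for $g\in G$ (taken over inner automorphisms).

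\emph{First equality.} Since $\Delta$ has finite orbits, $G$ has compact vertex stabilisers (Proposition~\ref{prop:permutation_topology_compact}). By Remark~\ref{rem:scale_why_translations}, every element of $G$ is elliptic, an inversion, or a translation, and the first two kinds have scale $1$. For a translation $g$, Proposition~\ref{prop:translation_to_translatable_circuit} gives a translatable circuit $C$ of the same length together with a cover and start vertex such that the axis of $g$ equals $L_{S,\widetilde{C}}$. By Lemma~\ref{lem:translations_conjugate}, $g$ is conjugate to the translation along $L_{S,\widetilde{C}}$ produced by Proposition~\ref{prop:translatable_circuit_to_translation}. The scale is invariant under conjugation, so Corollary~\ref{cor:scale_translatable_circuit} gives $s(g)=s(C)$. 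Conversely, each $s(C)$ is attained by Proposition~\ref{prop:translatable_circuit_to_translation}. Hence the translation scales are exactly $\{s(C)\mid C\in C_{\Delta}\}$.

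It remains to account for the value $1$ coming from elliptic elements. Every $G$ contains the identity, so $1\in s(G)$. I would therefore either read the statement with the convention that the identity is included, or show that $1$ is always a translation scale when translations exist. This is the delicate point. If $C_{\Delta}=\varnothing$, then both sides are empty apart from the value $1$. If $\mathbb{N}$ is taken to contain $0$, the term $s(C_{i})^{0}=1$ handles this; otherwise I would note this caveat explicitly and check whether the paper's convention for $s(G)$ means the scale values of translations only.

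\emph{Second equality.} Take $C\in C_{\Delta}$. Its $\sim$-class contains a minimal representative $C_{i}$, and minimal representatives are unique up to $\sim_{s}$. I would show that $C\sim_{s}(C_{i}^{n})$ for some $n$, i.e.\ that every element of the class is a shift of a power of the minimal circuit. The argument is that the generating relations preserve this form: shifting a power of $C_{i}$ gives a shift of a power of $C_{i}$, and powers of shifts are shifts of powers. The one nontrivial point is the converse, namely that if $D^{m}$ is a shifted power of $C_{i}$ then $D$ itself is one. For this I would use periodicity. The circuit $D^{m}$ has period dividing both the length of $D$ and $n\cdot\mathrm{len}(C_{i})$. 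By minimality of $C_{i}$, its minimal period is exactly $\mathrm{len}(C_{i})$, which forces $D$ to be a shift of $C_{i}^{\mathrm{len}(D)/\mathrm{len}(C_{i})}$. This is the standard fact that a periodic sequence has a least period dividing all periods. Alternatively, I would invoke the existing statement that each class has a minimal representative unique up to $\sim_{s}$, and combine it with the proof of Theorem~\ref{thm:axes_circuits_correspondence}, which realises any $C$ as $(D^{l/d})_{m}$ for a minimal $D$.

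Corollary~\ref{cor:scale_under_relations} then gives $s(C)=s((C_{i}^{n})_{k})=s(C_{i}^{n})=s(C_{i})^{n}$. Conversely, each $C_{i}^{n}$ lies in $C_{\Delta}$, so $s(C_{i})^{n}$ belongs to the middle set. I expect the main obstacle to be the structural claim that every member of a $\sim$-class is a shift of a power of the minimal one, that is, the periodicity argument. The scale computations themselves follow directly from the corollaries.
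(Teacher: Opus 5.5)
Your proposal is correct and follows the paper's route: the first equality comes from Propositions~\ref{prop:translatable_circuit_to_translation} and~\ref{prop:translation_to_translatable_circuit}, the second from Corollary~\ref{cor:scale_under_relations}, and your periodicity argument supplies what the paper uses only implicitly, namely that every member of a $\sim$-class is a shift of a power of its minimal representative. Your caveat about the value $1$ is justified, but only one of your remedies works, namely reading $s(G)$ as the set of scales of translations: for $\Aut(T_{d})$ with $d\ge 3$ every translatable circuit has scale $(d-1)^{n}\ge 2$, so allowing $0\in\mathbb{N}$ does not repair the middle set, and showing that $1$ is always a translation scale is impossible; the translation-only reading is the one the paper's proof and its $\Aut(T_{d})$ example implicitly adopt.
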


    \begin{proof}
        By Propositions~\ref{prop:translatable_circuit_to_translation} and \ref{prop:translation_to_translatable_circuit}, every translation of $G$ stems from a translatable circuit, hence $s(G)=\{s(C)\mid C\in C_{\Delta}\}$. By Corollary~\ref{cor:scale_under_relations}, the value $s(C)$ is a power of $s(C')$ whenever $C'$ is a minimal representative of $[C]$, hence the assertion.
    \end{proof}

    \begin{example}
        Theorem~\ref{thm:scale_values} recovers the scale of the full automorphism group of a regular tree as first computed in \cite[Section 3]{Wil94}.
        
        \begin{figure}[ht]
            \begin{tikzpicture}
                \node (0) at (0,0) {};
                \draw [fill] (0) circle [radius=1.5pt];
                \node [below=0.1cm] at (0) {$S_d$}; 
                \draw (0,0) .. controls (1,1.5) and (-1,1.5) .. (0,0);
                \node (1) at (0,1) {};
                \node [above=0.1cm] at (1) {$\{1, 2, \dots, d\}$};
            \end{tikzpicture}
            \caption{The local action diagram of $\Aut(T_d).$}
            \label{fig:aut_td_diagram}
        \end{figure}

        \noindent
        Consider the translatable circuit $C$ that traverses the loop of Figure~\ref{fig:aut_td_diagram} once and carries the non-diagonal orbit. Then $s(C)=d-1$ by Corollary~\ref{cor:scale_translatable_circuit}. Since $C$ is a minimal representative of the unique equivalence class of translatable circuits, we conclude that $s(\Aut(T_{d}))=\{(d-1)^{n}\mid n\in\mathbb{N}\}$ by Theorem~\ref{thm:scale_values}.
    \end{example}

    \section{Unimodular Groups}
    In \cite[Proposition 3.6]{BK90}, Bass--Kulkarni provide a formula for the modular function and an associated unimodularity criterion for groups acting on locally finite trees without inversion in terms of the associated Bass--Serre graph of groups. See also \cite[Proposition 7.1]{Mar26}. We rephrase this result and its proof in terms of local action diagrams. In doing so, we extend the framework to non-locally finite trees as long as the group has compact vertex stabilisers, treat groups acting with or without inversions via the same argument, and highlight the local nature of the criterion.
    
    In fact, we derive that being unimodular is a locally determined global property for such groups in the sense of Reid--Smith \cite[Section~8]{RS26}. Moreover, with a view towards a work-in-progress GAP package for local action diagrams, we make the unimodularity criterion computable in the cocompact case by reducing it to a finite condition. Finally, we remark that in the case of ($P$)-closed groups the unimodularity criterion can also be obtained using the theory developed in Sections~\ref{sec:axes_of_translation}~and~\ref{sec:scale_of_translations}.

    \vspace{0.2cm}
    Given a locally compact group $G$, let $\Delta_{G}:G\to\mathbb{R}_{>0}^{\ast}$ denote its modular function. That is, for any left Haar measure $\mu$ on $G$, element $g\in G$ and measurable set $E\subseteq G$ we have $\mu(Eg)=\Delta_{G}(g)\mu(E)$.
    
    Next, we consider a local action diagram $\Delta=(\Gamma,(X_{a}),(G(v)))$ with finite orbits. Let $C_{\Gamma}$ denote the set of circuits of $\Gamma$ and let $\mathbb{Z}[A\Gamma]$ be the free abelian group of finitely supported functions from $A\Gamma$ to $\mathbb{Z}$. Define $\nu:C_{\Gamma}\to\mathbb{Z}[A\Gamma]$ by letting $\nu(C)(a)$ be the number of times the arc $a\in A\Gamma$ appears in the circuit $C\in C_{\Gamma}$. Finally, define a group homomorphism $X:\mathbb{Z}[A\Gamma]\to\mathbb{Q}_{>0}^{\ast}$ for $m\in\mathbb{Z}[A\Gamma]$ by setting
    \begin{displaymath}
        X(m):=\prod\nolimits_{a\in A\Gamma}\abs*{X_{a}}^{m(a)}.
    \end{displaymath}

    \begin{proposition}\label{prop:unimodular_circuits}
        Let $T$ be a tree and $G\le\Aut(T)$ have compact vertex stabilisers. Consider $\Delta(T,G)\!=\!(\Gamma,(X_{a}),(G(v)))$ and the projection $\pi:T\to G\backslash T\!=\!\Gamma$. Let $g\!\in\! G$. For $x\in VT$, let $C=(a_{i})_{i=0}^{l-1}$ be a directed path from $x$ to $gx$. Then $\pi C\in C_{\Gamma}$ and
        \begin{displaymath}
            \Delta_{G}(g)=X(\nu(\pi C))/X(\nu(\pi\overline{C})).
        \end{displaymath}        
        Moreover, the following statements are equivalent.
        \begin{enumerate}[(i)]
            \item\label{itm:G_unimodular} The group $G$ is unimodular.
            \item\label{itm:circuit_condition} For all $C\in C_{\Gamma}$ we have $X(\nu(C))=X(\nu(\overline{C}))$.
            \item\label{itm:GP_unimodular} The group $\smash{G^{(P)}}$ is unimodular.
        \end{enumerate}
    \end{proposition}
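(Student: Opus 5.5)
The plan is to compute $\Delta_{G}(g)$ with the compact open subgroup $U:=G_{x}$, using the standard identity $\Delta_{G}(g)=[U:U\cap g^{-1}Ug]/[U:U\cap gUg^{-1}]$, which for $U=G_x$ becomes $\Delta_{G}(g)=\abs*{G_{x}\cdot gx}/\abs*{G_{gx}\cdot x}$ by orbit--stabiliser. The sign convention will be fixed against $\mu(Eg)=\Delta_G(g)\mu(E)$, and the two indices swapped if necessary. First, $\pi C$ is a circuit in $\Gamma$: $\pi$ is a graph homomorphism and $\pi(gx)=\pi(x)$. Since $T$ is a tree we may take $C$ to be the geodesic from $x$ to $gx$; a non-geodesic directed path differs from it by backtracks $a\overline{a}$, which contribute $\abs*{X_{\pi a}}\abs*{X_{\pi\overline a}}$ to both $X(\nu(\pi C))$ and $X(\nu(\pi\overline C))$ and so cancel in the ratio.

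The key step is the \emph{path-counting lemma}: for a geodesic $x=v_0,\dots,v_l=y$ with arcs $a_i=(v_i,v_{i+1})$,
\begin{displaymath}
    \frac{\abs*{G_{x}\cdot y}}{\abs*{G_{y}\cdot x}}=\prod_{i=0}^{l-1}\frac{\abs*{X_{\pi(a_i)}}}{\abs*{X_{\pi(\overline{a_i})}}}.
\end{displaymath}
I would prove it by induction on $l$ via a double-counting identity. For an arc $a$ with $o(a)=u$ and $t(a)=w$, one has $\abs*{G_{u}\cdot w}=\abs*{X_{\pi(a)}}$: indeed $X_{\pi(a)}$ is the $G(\pi u)$-orbit of the colour of $a$, and $G(\pi u)$ is the closure of the action of $G_u$ on $o^{-1}(u)$, so the orbits coincide (closure does not enlarge orbits for finite orbits). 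For the general step, use the multiplicativity
\begin{displaymath}
    [G_{x}:G_{x,y}]\,[G_{x,y}:G_{x,v_1,y}]=[G_{x}:G_{x,v_1}]\,[G_{x,v_1}:G_{x,v_1,y}],
\end{displaymath}
where $G_{x,y}$ fixes $v_1$, so the second factor on the left is $1$. Writing $[G_x:G_{x,y}]$ as an alternating product of single-arc indices $\abs*{G_u\cdot w}$ and their reverses, the same manipulation for $G_y$ makes the ratio telescope. Concretely, $\abs*{G_x\cdot y}\,\abs*{G_{v_1}\cdot x}\,\abs*{G_y\cdot v_1}=\abs*{G_{x}\cdot v_1}\,\abs*{G_{v_1}\cdot y}\,\abs*{G_y\cdot x}$, which holds because both sides equal $[G_{v_1}:G_{x,v_1,y}]\cdot[G_x:G_{x,v_1}][G_y:G_{y,v_1}]/\bigl([G_{v_1}:G_{x,v_1}][G_{v_1}:G_{y,v_1}]\bigr)$ up to rearrangement. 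All indices are finite since vertex stabilisers are compact and open. This index bookkeeping is the main obstacle, and I would do it carefully using unimodularity of the compact groups $G_{v}$: for compact open $A,B$ one has $[A:A\cap B]\mu(A\cap B)=\mu(A)$, so each ratio becomes a ratio of Haar measures and telescopes along the path. In fact the cleanest route is $\Delta_G(g)=\mu(gUg^{-1})/\mu(U)$ with $U=G_x$, combined with $\mu(G_{v_{i+1}})/\mu(G_{v_i})=[G_{v_{i+1}}:G_{v_i,v_{i+1}}]/[G_{v_i}:G_{v_i,v_{i+1}}]=\abs*{X_{\pi(\overline{a_i})}}/\abs*{X_{\pi(a_i)}}$. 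This telescopes immediately and gives the formula, possibly inverted according to the convention.

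For the equivalences: (i)$\Rightarrow$(ii). Given any circuit $D$ of $\Gamma$, lift it to a directed path in $T$ from $x$ to some $y$ with $\pi y=\pi x$. Lifting is possible because every arc of $\Gamma$ at $\pi(u)$ has a nonempty colour set, hence a lift at $u$. Pick $g\in G$ with $gx=y$; then the formula gives $1=X(\nu(D))/X(\nu(\overline D))$. (ii)$\Rightarrow$(i) is immediate from the formula. For (iii), $G^{(P)}$ also has compact vertex stabilisers and the same local action diagram (the $(P)$-closure has the same quotient, colour sets and closed local actions, by \cite{RS26}). Applying (i)$\Leftrightarrow$(ii) to it, the condition (ii) is identical for $G$ and $G^{(P)}$.
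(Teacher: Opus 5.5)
Your final route is essentially the paper's proof. You write $\Delta_{G}(g)$ as a ratio of Haar measures of $G_{x}$ and $G_{gx}$, and telescope along the path via $\mu(G_{o(a)})=|X_{\pi(a)}|\,\mu(G_{a})$ and $G_{a}=G_{\overline{a}}$. You then lift circuits for (i)$\Rightarrow$(ii) and use that $G^{(P)}$ has the same local action diagram (hence also compact vertex stabilisers) for (iii).

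The only thing to settle is the convention you left open. With $\mu(Eg)=\Delta_{G}(g)\mu(E)$ one has $\mu(gUg^{-1})=\mu(Ug^{-1})=\Delta_{G}(g)^{-1}\mu(U)$, so $\Delta_{G}(g)=\mu(G_{x})/\mu(G_{gx})=|G_{x}\cdot gx|/|G_{gx}\cdot x|$; your index formula and $\mu(gUg^{-1})/\mu(U)$ are reciprocals of this. This yields exactly the stated formula for any directed path. The geodesic reduction and the index-bookkeeping detour, whose displayed ``common value'' is in fact incorrect, can be dropped.
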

    \begin{proof}
        Let $\mu$ be a left-invariant Haar measure on $G$. Recall that vertex stabilisers in $G$ are compact open by assumption and therefore have finite non-zero measure. Consequently, given $g\in G$ and $x\in VT$, we have
        \begin{displaymath}
            \Delta_{G}(g)=\frac{\mu(G_{gx}g)}{\mu(G_{gx})}=\frac{\mu(g^{-1}G_{gx}g)}{\mu(G_{gx})}=\frac{\mu(G_{x})}{\mu(G_{gx})}.
        \end{displaymath}
        Furthermore, for any $x\in VT$ and $a\in o^{-1}(x)$ the orbit-stabiliser theorem yields
        \begin{displaymath}
        	\mu(G_{x})=[G_{x}:G_{a}]\mu(G_{a})=|G_{x}\cdot t(a)|\mu(G_{a})=|X_{\pi(a)}|\mu(G_{a}).
        \end{displaymath}
        For $i\in\{0,\ldots,l-1\}$, set $x_{i}:=o(a_{i})$ and $x_{l}:=t(a_{l-1})=gx$. By the above,
        \begin{displaymath}
            \Delta_{G}(g)=\frac{\mu(G_{x_{0}})}{\mu(G_{x_{l}})}=\prod_{i=0}^{l-1}\frac{\mu(G_{x_{i}})}{\mu(G_{x_{l-i}})}=\prod_{i=0}^{l-1}\frac{|X_{\pi(a_{i})}|}{|X_{\pi(\overline{a_{(l-1)-i}})}|}\prod_{i=0}^{l-1}\frac{\mu(G_{a_{i}})}{\mu(G_{\overline{a_{(l-1)-i}}})}=\frac{X(\nu(\pi C))}{X(\nu(\pi\overline{C}))}
        \end{displaymath}
        since $G_{a}=G_{\overline{a}}$ and hence $\mu(G_{a})=\mu(G_{\overline{a}})$ for all $a\in AT$.
        
        Now, suppose that $G$ is unimodular and let $C'\in C_{\Gamma}$. Pick any lift $\smash{C}$ of $C'$ in $T$. Let $x,y\in VT$ denote the start and end vertex of $\smash{C}$ respectively. Then $\pi(x)=\pi(y)$ since $C$ is a circuit and hence there is $g\in G$ such that $gx=y$. By the above,
        \begin{displaymath}
            1=\Delta_{G}(g)=X(\nu(\pi C))/X(\nu(\pi\overline{C}))=X(\nu(C'))/X(\nu(\overline{C'}))
        \end{displaymath}
        as desired. Conversely, if (ii) holds, then $G$ is unimodular by the formula for the modular function. Finally, since $G$ and $G^{(P)}$ have the same local action diagram, the equivalence with (iii) follows.
    \end{proof}

    However, even a finite local action diagram may admit infinitely many (pairwise non-equivalent minimal translatable) circuits. For example, consider a local action diagram $\Delta$ whose underlying graph $\Gamma$ consists of a single vertex with two self-reverse loops $a,b\in A\Gamma$. Given $m,n\in\mathbb{N}$ let $C(m,n)$ denote the circuit of $\Gamma$ given by $n$ times traversing $a$, followed by $m$ times traversing $b$. Then any translatable circuits associated to $\{C(1,n)\mid n\in\mathbb{N}\}$ are minimal and pairwise non-equivalent.

    \vspace{0.2cm}
    We thus establish a reduction of Proposition~\ref{prop:unimodular_circuits} to \emph{fundamental cycles} of $\Gamma$, based on the observation that self-reverse loops and backtracking segments within a circuit contribute equally to the numerator and denominator of $ \Delta_{G}$ in Proposition~\ref{prop:unimodular_circuits}.

    Let $\Delta=(\Gamma,(X_{a}),(G(v)))$ be a local action diagram. We introduce more objects and maps associated to $\Delta$ whose relationship may be summarised as follows.

    \begin{figure}[ht]
        \centering
            \begin{displaymath}
                \xymatrix@C=1.2cm{
                    & C_{\Gamma} \ar@{->}[d]^{\nu} & \\
                    \mathbb{Z}[V\Gamma] \ar@{<-}[d]_{\delta} \ar@{<-}[r]^{\partial} & \mathbb{Z}[A\Gamma] \ar@{->>}[d]^{\mathrm{pr}} \ar[r]^-{X} \ar@{->>}[dl]|{\ \varphi\ } & \mathbb{Q}_{>0}^{\ast} \\
                    \mathbb{Z}[E\Gamma'] \ar@{<<-< }[r]_-{\varphi'} & \mathbb{Z}[A\Gamma]/N
        }
            \end{displaymath}
        \caption{Reduction from arbitrary circuits to fundamental cycles.}
        \label{fig:unimorudlar_reduction}
    \end{figure}
    
    Let $\partial:\mathbb{Z}[A\Gamma]\to\mathbb{Z}[V\Gamma]$ be the group homomorphism defined by $\partial(a):=t(a)-o(a)$ for all $a\in A\Gamma$. We reduce the condition of Proposition~\ref{prop:unimodular_circuits}\ref{itm:circuit_condition} to a cycle basis associated to a spanning tree of the undirected graph $\Gamma'$ that underlies $\Gamma$ after removing self-reverse loops. To this end, let $L:=\{a\in A\Gamma\mid a=\overline{a}\}$ denote the set of self-reverse loops of $\Gamma$. Then we have $E\Gamma':=\{\{a,\overline{a}\}\mid a\in A\Gamma\backslash L\}$. Consider the free abelian group $\mathbb{Z}[E\Gamma']$ and fix an orientation $O\subseteq A\Gamma\backslash L$ of $E\Gamma'$. For every $e\in E\Gamma'$, let $a_{e}\in A\Gamma$ be the unique arc in $O\cap e$. We let $\delta:\mathbb{Z}[E\Gamma']\to\mathbb{Z}[V\Gamma]$ be the classical boundary group homomorphism given by $\delta(e)=t(a_{e})-o(a_{e})$. Finally, let
    \begin{displaymath}
        N:=\langle\{a,b+\overline{b}\mid a,b\in A\Gamma,\ a=\overline{a},\ b\neq\overline{b}\}\rangle
    \end{displaymath}
    be the (normal) subgroup of $\mathbb{Z}[A\Gamma]$ generated by self-reverse loops and backtracking circuits of length $2$. Notice that both $\mathrm{im}(\nu)$ and $N$ are subgroups of $\ker\partial$.

    \begin{lemma}\label{lem:unimodular_reduction}
        Retain the above notation. The surjective group homomorphism
        \begin{displaymath}
            \varphi:\mathbb{Z}[A\Gamma]\to\mathbb{Z}[E\Gamma']\quad \text{defined by}\quad \varphi(a):=\begin{cases}e & a=a_{e} \\ -e & a=\overline{a_{e}} \\ 0 & a=\overline{a}\end{cases}
        \end{displaymath}
        satisfies $\ker\varphi=N$ and therefore induces an isomorphism $\varphi':\mathbb{Z}[A\Gamma]/N\to\mathbb{Z}[E\Gamma']$. Moreover, $\partial=\delta\circ\varphi$ and $\varphi'$ restricts to an isomorphism $\ker\partial/N\to\ker\delta$.
    \end{lemma}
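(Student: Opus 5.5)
The plan is to verify the claims in order: well-definedness and surjectivity of $\varphi$, then $\ker\varphi=N$, then $\partial=\delta\circ\varphi$, and finally the restricted isomorphism.

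\emph{Well-definedness and surjectivity.} Since $A\Gamma=L\sqcup O\sqcup\overline{O}$, every arc is exactly one of $a_{e}$, $\overline{a_{e}}$, or a self-reverse loop. Hence the case distinction defines $\varphi$ on the free basis $A\Gamma$, and $\varphi$ extends uniquely to a homomorphism. It is surjective because $\varphi(a_{e})=e$ for every $e\in E\Gamma'$.

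\emph{The kernel.} The inclusion $N\subseteq\ker\varphi$ is immediate: $\varphi(a)=0$ for $a\in L$, and for $b\notin L$ we have $\varphi(b)+\varphi(\overline{b})=\pm e\mp e=0$. For the reverse inclusion, take $m=\sum_{a}m(a)a\in\ker\varphi$. Subtract $\sum_{a\in L}m(a)a\in N$, and subtract $\sum_{e}m(\overline{a_{e}})(a_{e}+\overline{a_{e}})\in N$. The result is an element of the form $\sum_{e}n_{e}a_{e}$ that still lies in $\ker\varphi$. Its image is $\sum_{e}n_{e}e=0$, so every $n_{e}=0$, and therefore $m\in N$. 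The first isomorphism theorem then gives the isomorphism $\varphi':\mathbb{Z}[A\Gamma]/N\to\mathbb{Z}[E\Gamma']$.

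\emph{The identity $\partial=\delta\circ\varphi$.} It suffices to check this on basis arcs.
\begin{itemize}
\item For $a=a_{e}$, both sides equal $t(a_{e})-o(a_{e})$.
\item For $a=\overline{a_{e}}$, we get $\partial(a)=o(a_{e})-t(a_{e})=-\delta(e)$.
\item For $a\in L$, the arc is a loop, so $t(a)=o(a)$ and $\partial(a)=0=\delta(\varphi(a))$. This uses that $o(\overline{a})=t(a)$, which for $a=\overline{a}$ forces $a$ to be a loop.
\end{itemize}

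\emph{The restricted isomorphism.} Since $\partial=\delta\circ\varphi$, we get $\ker\partial=\varphi^{-1}(\ker\delta)$. The kernel $N=\ker\varphi$ is contained in $\ker\partial$, so $\varphi'$ maps $\ker\partial/N$ injectively into $\ker\delta$. For surjectivity, take $z\in\ker\delta$. Surjectivity of $\varphi$ gives a preimage $m$ with $\varphi(m)=z$, and then $\partial(m)=\delta(z)=0$, so $m\in\ker\partial$.

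There is no real obstacle. The only care needed is with self-reverse loops, where one must observe that they are loops, so that $\partial$ vanishes on them.
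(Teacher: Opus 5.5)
Your proof is correct and follows essentially the same route as the paper: you check $N\subseteq\ker\varphi$, recover $m\in N$ from $m(a_{e})=m(\overline{a_{e}})$ (your subtract-and-reduce step is the same computation), verify $\partial=\delta\circ\varphi$ (arc by arc rather than on a general element), and deduce the restricted isomorphism. Your explicit observation that self-reverse arcs are loops, and your spelled-out proof that $\ker\partial/N\to\ker\delta$ is surjective, fill in details the paper leaves implicit.
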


    \begin{proof}
        First note that the definition of $\varphi$ on the basis of $\mathbb{Z}[A\Gamma]$ extends to a unique group homomorphism, which is surjective by definition. We show that $\ker\varphi=N$. By definition, $N\subseteq\ker\varphi$. Conversely, given an element $m\in\ker\varphi\le\mathbb{Z}[A\Gamma]$ we have $0=\varphi(m)(e)=m(a_{e})-m(\overline{a_{e}})$ for all $e\in E\Gamma'$. Therefore,
        \begin{displaymath}
            m=\sum_{a\in L}m(a)a+\sum_{a\in O}m(a)(a+\overline{a})\in N.
        \end{displaymath}       
        Hence there is group isomorphism $\varphi':\mathbb{Z}[A\Gamma]/N\to\mathbb{Z}[E\Gamma']$ such that $\varphi'\circ\mathrm{pr}=\varphi$, where $\mathrm{pr}:\mathbb{Z}[A\Gamma]\to\mathbb{Z}[A\Gamma]/N$ is the quotient homomorphism.

        Concerning the claim $\partial=\delta\circ\varphi$ recall that $A\Gamma=O\sqcup\overline{O}\sqcup L$ and $O=\{a_{e}\mid e\in E\Gamma'\}$. Since $\partial(a)=0$ for all $a\in L$ and $\partial(\overline{a})=-\partial(a)$ for all $a\in A\Gamma\backslash L$ we obtain
        \begin{displaymath}
            \partial(m)\!=\!\!\sum_{a\in A\Gamma}\!m(a)\partial(a)\!=\!\!\sum_{a\in O}(m(a)-m(\overline{a}))\partial(a) \\
            \!=\!\!\!\sum_{e\in E\Gamma'}\!(m(a_{e})-m(\overline{a_{e}}))\delta(e)\!=\!\delta(\varphi(m)).
        \end{displaymath}
        Thus $\partial=\delta\circ\varphi=\delta\circ\varphi'\circ\mathrm{pr}$ and so $\varphi'$ restricts to an isomorphism $\ker\partial/N\to\ker\delta$.
    \end{proof}

    Now, let $T$ be a spanning tree of $\Gamma'$. For every $e\in E\Gamma'\backslash ET$, so-called \emph{chords}, let $P_{e}$ denote the unique simple path in $T$ joining $t(a_{e})$ to $o(a_{e})$ and let
    \begin{displaymath}
        C'_{e}=e+\!\!\sum_{f\in EP_{e}}\varepsilon_{e}(f)f\in\mathbb{Z}[E\Gamma']
    \end{displaymath}
    be the associated \textbf{fundamental cycle}, where $\varepsilon_{e}(f)=1$ if $P_{e}$ traverses $f$ in the direction of $a_{f}$ and $\varepsilon_{e}(f)=-1$ if $P_{e}$ traverses $f$ in the direction of $\overline{a_{f}}$.

    \begin{lemma}\label{lem:cyle_basis}
        Retain the above notation. Then $\{C'_{e}\!\mid\!e\!\in\! E\Gamma'\backslash ET\}$ is a basis of $\ker\delta$.
    \end{lemma}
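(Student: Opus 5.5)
This is the classical fact that fundamental cycles with respect to a spanning tree form a basis of the cycle space. Here it is phrased over $\mathbb{Z}$ for the possibly infinite graph $\Gamma'$, where elements of $\mathbb{Z}[E\Gamma']$ are finitely supported. The plan has three steps: show each $C'_{e}$ lies in $\ker\delta$, show the family is linearly independent, and show it spans $\ker\delta$.

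\emph{Membership.} First identify $\delta$ of a path. For a path in $\Gamma'$ from $u$ to $w$, the signed sum of its edges (sign $+1$ if traversed along $a_{f}$, $-1$ otherwise) has $\delta$-image $w-u$, since the sum telescopes. The signed sum of $P_{e}$ is a path from $t(a_{e})$ to $o(a_{e})$, so its $\delta$-image is $o(a_{e})-t(a_{e})$. Adding $\delta(e)=t(a_{e})-o(a_{e})$ gives $\delta(C'_{e})=0$.

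\emph{Linear independence.} Each chord $e$ appears in $C'_{e}$ with coefficient $1$. No other fundamental cycle $C'_{e'}$ contains $e$, because $P_{e'}$ lies in the spanning tree $T$ and $e\notin ET$. So if a finite combination $\sum_{e}\lambda_{e}C'_{e}$ vanishes, evaluating its coefficient at each chord $e$ gives $\lambda_{e}=0$.

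\emph{Spanning.} Let $m\in\ker\delta$, and let $F$ be the finite set of chords in its support. Put
\[
m':=m-\sum_{e\in F}m(e)\,C'_{e}.
\]
Then $m'\in\ker\delta$, and $m'$ is supported on the finitely many edges of $T$ in its support. So $m'$ lies in $\ker\delta$ restricted to $\mathbb{Z}[ET]$, and it remains to show $m'=0$. Let $F'$ be the finite subforest of $T$ spanned by the support of $m'$. If $m'\neq 0$, then $F'$ has a leaf $x$ incident to exactly one edge $f$ of $F'$ with $m'(f)\neq 0$. Loops are impossible here, since $T$ is a tree. The coefficient of $x$ in $\delta(m')$ is therefore $\pm m'(f)\neq 0$, contradicting $m'\in\ker\delta$. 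Hence $m'=0$ and $m=\sum_{e\in F}m(e)C'_{e}$.

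The only point needing care is the spanning step in the infinite setting. It is handled by finite support: only finitely many chords occur, and the leaf argument is applied to a finite subforest. Orientable loops of $\Gamma$ are chords whose path $P_{e}$ is trivial, so the same argument covers them, and for them $C'_{e}=e$ with $\delta(e)=0$.
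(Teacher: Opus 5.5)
Your proof is correct and follows the same route as the paper: linear independence by evaluating at chords, and spanning by subtracting $\sum_{e} m(e)C'_{e}$ and running a leaf argument on the finite subforest of $T$ that supports the remainder. You also check explicitly that each $C'_{e}$ lies in $\ker\delta$, via the telescoping sum along $P_{e}$; the paper uses this tacitly when it asserts that the remainder lies in $\ker\delta$.
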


    \begin{proof}
        Consider a $\mathbb{Z}$-linear combination $\smash{\sum_{e\in E\Gamma'\backslash ET}k_{e}C'_{e}=0}$ for some coefficients $k_{e}\in\mathbb{Z}$. For $e,f\in E\Gamma'\backslash ET$ we have $C'_{e}(f)\neq 0$ if and only if $f=e$. Hence evaluating at $e$ implies $k_{e}=0$ and so the set $\{C'_{e}\!\mid\!e\!\in\! E\Gamma'\backslash ET\}$ is $\mathbb{Z}$-linearly independent. 
        
        To see that $\{C'_{e}\!\mid\!e\!\in\! E\Gamma'\backslash ET\}$ is also spanning, let $C\in\ker\delta$ and consider 
        \begin{displaymath}
            D:=C-\!\!\!\!\sum_{e\in E\Gamma'\backslash ET}\!\!\!\!C(e)C'_{e}\in\ker\delta\le\mathbb{Z}[E\Gamma'].
        \end{displaymath}
        Since $D(e)=0$ for all $e\in E\Gamma'\backslash ET$ by construction, we conclude that $D$ is in fact supported on $ET\cap\mathrm{supp}(D)$, a finite forest. Given that $D\in\ker\delta$ we deduce that $D(f)=0$ for all edges $f\in ET\cap\mathrm{supp}(D)$ incident with a leaf of $T\cap\mathrm{supp}(D)$. Iteratively removing leaves thus shows that $D=0$, as desired.
    \end{proof}    
    
    For every $e\in E\Gamma'\backslash ET$, pick a preimage $C_{e}\in\varphi^{-1}(C'_{e})\cap\mathrm{im}(\nu)\subseteq\mathbb{Z}[A\Gamma]$. Then the group $\mathbb{Z}[A\Gamma]$ is generated by $\{C_{e}\mid e\in E\Gamma'\backslash ET\}\cup N$ by Lemmas~\ref{lem:unimodular_reduction} and \ref{lem:cyle_basis}. Finally, we obtain the following reduction of Proposition~\ref{prop:unimodular_circuits} to fundamental cycles, a finite and hence computable condition in the case of a cocompact action.    
    \begin{theorem}\label{thm:unimodular_fundamental_cycles}
        Retain the notation from above as well as Proposition~\ref{prop:unimodular_circuits}. Then conditions~\ref{itm:G_unimodular}---\ref{itm:GP_unimodular} are equivalent to the following statements.
        \begin{enumerate}[(i)]
            \setcounter{enumi}{3}
            \item\label{itm:fundamental_cycle_condition} For all $e\in E\Gamma'\backslash ET$ we have $X(C_{e})=X(\overline{C_{e}})$.
            \item\label{itm:any_basis_condition} For any collection $(C_{i})_{i\in I}$ of elements $C_{i}\in\mathbb{Z}[A\Gamma]$ that generate $\mathbb{Z}[A\Gamma]$ together with $N$ we have $X(C_{i})=X(\overline{C_{i}})$.
        \end{enumerate}
    \end{theorem}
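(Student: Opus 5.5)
The plan is to pass everything through the homomorphism $\Phi:\mathbb{Z}[A\Gamma]\to\mathbb{Q}_{>0}^{\ast}$, $\Phi(m):=X(m)/X(\overline{m})$. Here $\overline{m}(a):=m(\overline{a})$ is the reversal automorphism of $\mathbb{Z}[A\Gamma]$, and by definition $\nu(\overline{C})=\overline{\nu(C)}$ for every $C\in C_{\Gamma}$. Since $X$ and reversal are both homomorphisms, so is $\Phi$. In this language, condition~\ref{itm:circuit_condition} of Proposition~\ref{prop:unimodular_circuits} says that $\Phi$ vanishes, i.e.\ equals $1$, on $\mathrm{im}(\nu)$. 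Condition~\ref{itm:fundamental_cycle_condition} says $\Phi(C_{e})=1$ for all chords $e$, and condition~\ref{itm:any_basis_condition} says $\Phi(C_{i})=1$ for the given generators.

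First step: $\Phi$ is trivial on $N$. For a self-reverse loop $a$ we have $\overline{a}=a$, so $\Phi(a)=|X_{a}|/|X_{a}|=1$. For $b\neq\overline{b}$ we have $\overline{b+\overline{b}}=b+\overline{b}$, so $\Phi(b+\overline{b})=1$. Hence $\Phi$ factors through $\mathbb{Z}[A\Gamma]/N$.

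Second step, the implications. \ref{itm:circuit_condition}$\Rightarrow$\ref{itm:fundamental_cycle_condition} is immediate because each $C_{e}$ was chosen in $\mathrm{im}(\nu)$. For \ref{itm:fundamental_cycle_condition}$\Rightarrow$\ref{itm:circuit_condition}, let $C\in C_{\Gamma}$. Then $\nu(C)\in\ker\partial$, and by Lemmas~\ref{lem:unimodular_reduction} and \ref{lem:cyle_basis} the element $\varphi(\nu(C))\in\ker\delta$ is an integer combination $\sum_{e}k_{e}C'_{e}=\varphi\bigl(\sum_{e}k_{e}C_{e}\bigr)$. Therefore $\nu(C)-\sum_{e}k_{e}C_{e}\in\ker\varphi=N$. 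Since $\Phi$ is trivial on $N$ and on each $C_{e}$, we get $\Phi(\nu(C))=1$. For \ref{itm:any_basis_condition}, note that $(C_{e})_{e}$ is one admissible collection, so \ref{itm:any_basis_condition}$\Rightarrow$\ref{itm:fundamental_cycle_condition}. Conversely, under \ref{itm:fundamental_cycle_condition} I would show $\Phi$ is trivial on the subgroup generated by the $C_{e}$ and $N$, and hence on any $C_{i}$ in that subgroup. This makes \ref{itm:any_basis_condition} hold.

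The expected obstacle is the precise meaning of \ref{itm:any_basis_condition}. The $C_{e}$ together with $N$ generate only $\ker\partial$, not all of $\mathbb{Z}[A\Gamma]$; the text preceding the theorem slightly overstates this. Moreover, $\Phi$ need not be trivial on all of $\mathbb{Z}[A\Gamma]$, since for a non-loop arc $a$ we have $\Phi(a)=|X_{a}|/|X_{\overline{a}}|$. So I would read ``generate $\mathbb{Z}[A\Gamma]$'' as ``generate $\ker\partial$'', which contains $\mathrm{im}(\nu)$. Under this reading, \ref{itm:any_basis_condition}$\Rightarrow$\ref{itm:circuit_condition} runs as before: write $\nu(C)$ as a combination of the $C_{i}$ plus an element of $N$, and apply $\Phi$. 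The implication \ref{itm:circuit_condition}$\Rightarrow$\ref{itm:any_basis_condition} additionally needs $\ker\partial=\mathrm{im}(\nu)+N$ as groups, i.e.\ $\mathrm{im}(\nu)$ generates $\ker\partial$ modulo $N$. This again follows from Lemmas~\ref{lem:unimodular_reduction} and \ref{lem:cyle_basis}, since the chosen $C_{e}\in\mathrm{im}(\nu)$ map onto a basis of $\ker\delta\cong\ker\partial/N$. Once $\Phi$ is known to be trivial on $\ker\partial$, it is trivial on every $C_{i}$. Finally, equivalence with \ref{itm:G_unimodular} and \ref{itm:GP_unimodular} is inherited from Proposition~\ref{prop:unimodular_circuits}.
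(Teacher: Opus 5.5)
Your proof is correct and is essentially the paper's argument. The paper likewise writes $\nu(C)=n+\sum_{e}k_{e}C_{e}$ with $n\in N$ via Lemmas~\ref{lem:unimodular_reduction} and~\ref{lem:cyle_basis}, and uses multiplicativity of $X$ together with $X(n)=X(\overline{n})$ on $N$ to prove (ii)$\Leftrightarrow$(iv), (v)$\Rightarrow$(ii) and (iv)$\Rightarrow$(v); your homomorphism $\Phi$ just packages this.

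Your caveat is also right. The set $\{C_{e}\}\cup N$ generates only $\varphi^{-1}(\ker\delta)=\ker\partial$, and the paper's step (iv)$\Rightarrow$(v) tacitly needs $C_{i}\in\ker\partial$. Under the literal wording, (v) already fails for the one-edge diagram of a biregular tree with $|X_{a}|\neq|X_{\overline{a}}|$, where (i)--(iv) hold. So reading ``generate $\ker\partial$'' is the appropriate repair.
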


    \begin{proof}
        Suppose that Proposition~\ref{prop:unimodular_circuits}\ref{itm:circuit_condition} holds. For every $e\in E\Gamma'\backslash ET$ there is, by construction, a circuit $C\in C_{\Gamma}$ such that $\nu(C)=C_{e}$. Hence~\ref{itm:fundamental_cycle_condition}.

        Conversely, suppose~\ref{itm:fundamental_cycle_condition}: $X(C_{e})=X(\overline{C_{e}})$ for all $e\in E\Gamma'\backslash ET$. Consider $C\in C_{\Gamma}$. By the above, there are $k_{e}\in\mathbb{Z}$ and $n\in N$ such that $\smash{\nu(C)=n+\sum_{e\in E\Gamma'\backslash ET}k_{e}C_{e}}$. Since $X$ is a group homomorphism, and $X(n)=X(\overline{n})$ for all $n\in N$, we obtain
        \begin{displaymath}
            X(\nu(C))=X(n)\cdot\!\!\!\!\!\!\prod_{e\in E\Gamma'\backslash ET}\!\!\!\!\!\!X(C_{e})^{k(e)}\!=\!X(\overline{n})\cdot\!\!\!\!\!\!\prod_{e\in E\Gamma'\backslash ET}\!\!\!\!\!\!X(\overline{C_{e}})^{k_{e}}=X(\nu(\overline{C})),
        \end{displaymath}
        which is \ref{itm:circuit_condition}, as desired. Replacing the elements $C_{e}$ with the elements $C_{i}$, the same argument shows that \ref{itm:any_basis_condition} implies \ref{itm:circuit_condition}. Finally, expressing the elements $C_{i}$ in terms of the $C_{e}$ and elements of $N$, the same argument also shows that \ref{itm:fundamental_cycle_condition} implies \ref{itm:any_basis_condition}.
    \end{proof}

    \begin{example}
		Consider the local action diagram $\Delta = (\Gamma, (X_{a}), (G(v)))$ in Figure~\ref{fig:ex_unimodular}. 

		\begin{figure}[ht]
			\begin{tikzpicture}
				\begin{scope}
					[decoration={markings,
						mark=at position 0.52 with {\arrow{>}},
						mark=at position 0.5 with {\node[above]{\small{$\set*{1, 2}$}};}},
					line width=0.6pt]
					\node (0) at (0,0) {};
					\draw [fill] (0) circle [radius=1.5pt];
					\node [left=0.1cm, below=0.15cm] at (0) {\small{$C_2\times S_3$}}; 
					
					\node (1) at (3,0) {};
					\draw [fill] (1) circle [radius=1.5pt];
					\draw [postaction=decorate] (0,0) .. controls (1,0.25) and (2,0.25) .. (3,0);
					\node [below=0.15cm] at (3,0) {\small{$C_{2}\times S_{3}$}};
				\end{scope}

				\begin{scope}
					[decoration={markings,
						mark=at position 0.52 with {\arrow{>}},
					mark=at position 0.3 with {\node[above]{\rotatebox{60}{\small{$\set*{3,4,5}$}}};}},
					line width=0.6pt]
					\node (U) at (1.5, 2.598) {}; 
					\draw [fill] (U) circle [radius=1.5pt];
					\node [above=0.15cm] at (U) {\small{$C_3$}}; 
					\draw [postaction=decorate] (0,0) .. controls (0.283,0.991) and (0.783,1.857) .. (1.5, 2.598); 
				\end{scope}

				\begin{scope}
					[decoration={markings,
						mark=at position 0.52 with {\arrow{<}},
					mark=at position 0.7 with {\node[below=0.2]{\rotatebox{240}{\small{$\{\ast\}$}}};}},
					line width=0.6pt]
					\draw [postaction=decorate] (0,0) .. controls (0.717,0.741) and (1.217,1.607) .. (1.5, 2.598);
				\end{scope}

				\begin{scope}
					[decoration={markings,
						mark=at position 0.52 with {\arrow{<}},
					mark=at position 0.15 with {\node[above=0.6]{\rotatebox{300}{\small{$\set*{a, b}$}}};}},
					line width=0.6pt]
					\draw [postaction=decorate] (3,0) .. controls (2.283,0.741) and (1.783,1.607) .. (1.5, 2.598); 
				\end{scope}

				\begin{scope}
					[decoration={markings,
						mark=at position 0.52 with {\arrow{>}},
					mark=at position 0.825 with {\node[below=0.4]{\rotatebox{-60}{\small{$\{\alpha,\beta,\gamma\}$}}};}},
					line width=0.6pt]
					\draw [postaction=decorate] (3,0) .. controls (2.716,0.991) and (2.217,1.857) .. (1.5, 2.598); 
				\end{scope}

				\begin{scope}
					[decoration={markings,
						mark=at position 0.52 with {\arrow{<}},
						mark=at position 0.5 with {\node[below=0.1]{\small{$\{\ast\}$}};}},
					line width=0.6pt]
					\draw [postaction=decorate] (0,0) .. controls (1,-0.25) and (2,-0.25) .. (3,0);
				\end{scope}

				\begin{scope}
					[decoration={markings,
						mark=at position 0.52 with {\arrow{>}},
						mark=at position 0.5 with {\node[above]{\small{$\set*{c,d,e}$}};}},
					line width=0.6pt]
					\node (R) at (6,0) {};
					\draw [fill] (R) circle [radius=1.5pt];
					\node [left=0.1cm, below=0.15cm] at (R) {\small{$C_2$}}; 
					\draw (6,0) .. controls (7.5,1) and (7.5,-1) .. (6,0);
					\node [right=0.1cm] at (7,0) {\small{$\set*{A, B}$}};
					
					\draw [postaction=decorate] (3,0) .. controls (4,0.25) and (5,0.25) .. (6,0);
				\end{scope}
				\begin{scope}
					[decoration={markings,
						mark=at position 0.52 with {\arrow{<}},
						mark=at position 0.5 with {\node[below=0.1]{\small{$\{\ast\}$}};}},
					line width=0.6pt]
					\draw [postaction=decorate] (3,0) .. controls (4,-0.25) and (5,-0.25) .. (6,0);
				\end{scope}
			\end{tikzpicture}
            \vspace{-0.3cm}
			\caption{A local action diagram of a unimodular (P)-closed group.}
			\label{fig:ex_unimodular}
		\end{figure}

		Note that $\mathbf{U}(\Delta)$ contains edge inversions due to the self-reverse loop on the rightmost vertex. After removing this self-reverse loop, we obtain a spanning tree and see that there is only one associated fundamental cycle, $C$, see Figure~\ref{fig:unimodular-st}.

		\begin{figure}[ht]
        \raisebox{-.5\height}{
			\begin{tikzpicture}
				\begin{scope}
					[decoration={markings,
						mark=at position 0.52 with {\arrow{>}},
						mark=at position 0.5 with {\node[above]{};}},
					line width=0.6pt]
					\node (0) at (0,0) {};
					\draw [fill] (0) circle [radius=1.5pt];
					\node [left=0.1cm, below=0.15cm] at (0) {}; 
					
					\node (1) at (3,0) {};
					\draw [fill] (1) circle [radius=1.5pt];
					\draw [postaction=decorate] (0,0) .. controls (1,0.25) and (2,0.25) .. (3,0);
					\node [below=0.15cm] at (3,0) {};
				\end{scope}

				\begin{scope}
					[decoration={markings,
						mark=at position 0.52 with {\arrow{>}},
					mark=at position 0.3 with {\node[above]{};}},
					line width=0.6pt]
					\node (U) at (1.5, 2.598) {}; 
					\draw [fill] (U) circle [radius=1.5pt];
					\node [above=0.15cm] at (U) {}; 
					\draw [postaction=decorate] (0,0) .. controls (0.283,0.991) and (0.783,1.857) .. (1.5, 2.598); 
				\end{scope}

				\begin{scope}
					[decoration={markings,
						mark=at position 0.52 with {\arrow{<}},
					mark=at position 0.7 with {\node[below=0.2]{};}},
					line width=0.6pt]
					\draw [postaction=decorate] (0,0) .. controls (0.717,0.741) and (1.217,1.607) .. (1.5, 2.598);
				\end{scope}

				\begin{scope}
					[decoration={markings,
						mark=at position 0.52 with {\arrow{<}},
						mark=at position 0.5 with {\node[below=0.1]{};}},
					line width=0.6pt]
					\draw [postaction=decorate] (0,0) .. controls (1,-0.25) and (2,-0.25) .. (3,0);
				\end{scope}

				\begin{scope}
					[decoration={markings,
						mark=at position 0.52 with {\arrow{>}},
						mark=at position 0.5 with {\node[above]{};}},
					line width=0.6pt]
					\node (R) at (6,0) {};
					\draw [fill] (R) circle [radius=1.5pt];
					\node [left=0.1cm, below=0.15cm] at (R) {}; 
					\draw [postaction=decorate] (3,0) .. controls (4,0.25) and (5,0.25) .. (6,0);
				\end{scope}
				\begin{scope}
					[decoration={markings,
						mark=at position 0.52 with {\arrow{<}},
						mark=at position 0.5 with {\node[below=0.1]{};}},
					line width=0.6pt]
					\draw [postaction=decorate] (3,0) .. controls (4,-0.25) and (5,-0.25) .. (6,0);
				\end{scope}
		\end{tikzpicture}}
        \hspace{0.5cm}
        \raisebox{-.535\height}{
			\begin{tikzpicture}
				\begin{scope}
					[decoration={markings,
						mark=at position 0.52 with {\arrow{>}},
						mark=at position 0.5 with {\node[above]{\small{$\set*{1, 2}$}};}},
					line width=0.6pt]
					\node (0) at (0,0) {};
					\draw [fill] (0) circle [radius=1.5pt];
					\node [left=0.1cm, below=0.15cm] at (0) {}; 
					
					\node (1) at (3,0) {};
					\draw [fill] (1) circle [radius=1.5pt];
					\draw [postaction=decorate] (0,0) .. controls (1,0.25) and (2,0.25) .. (3,0);
					\node [below=0.15cm] at (3,0) {};
				\end{scope}

				\begin{scope}
					[decoration={markings,
						mark=at position 0.52 with {\arrow{>}},
					mark=at position 0.3 with {\node[above]{\rotatebox{60}{\small{$\set*{3, 4, 5}$}}};}},
					line width=0.6pt]
					\node (U) at (1.5, 2.598) {}; 
					\draw [fill] (U) circle [radius=1.5pt];
					\node [above=0.15cm] at (U) {}; 
					\draw [postaction=decorate] (0,0) .. controls (0.283,0.991) and (0.783,1.857) .. (1.5, 2.598); 
				\end{scope}

				\begin{scope}
					[decoration={markings,
						mark=at position 0.52 with {\arrow{<}},
					mark=at position 0.7 with {\node[below=0.2]{\rotatebox{240}{\small{$\{\ast\}$}}};}},
					line width=0.6pt]
					\draw [postaction=decorate] (0,0) .. controls (0.717,0.741) and (1.217,1.607) .. (1.5, 2.598);
				\end{scope}

				\begin{scope}
					[decoration={markings,
						mark=at position 0.52 with {\arrow{<}},
					mark=at position 0.15 with {\node[above=0.6]{\rotatebox{300}{\small{$\set*{a,b}$}}};}},
					line width=0.6pt]
					\draw [postaction=decorate] (3,0) .. controls (2.283,0.741) and (1.783,1.607) .. (1.5, 2.598); 
				\end{scope}

				\begin{scope}
					[decoration={markings,
						mark=at position 0.52 with {\arrow{>}},
					mark=at position 0.825 with {\node[below=0.4]{\rotatebox{-60}{\small{$\{\alpha, \beta, \gamma\}$}}};}},
					line width=0.6pt]
					\draw [postaction=decorate] (3,0) .. controls (2.716,0.991) and (2.217,1.857) .. (1.5, 2.598); 
				\end{scope}

				\begin{scope}
					[decoration={markings,
						mark=at position 0.52 with {\arrow{<}},
						mark=at position 0.5 with {\node[below=0.1]{\small{$\{\ast\}$}};}},
					line width=0.6pt]
					\draw [postaction=decorate] (0,0) .. controls (1,-0.25) and (2,-0.25) .. (3,0);
				\end{scope}

			\end{tikzpicture}}
            \vspace{-0.3cm}
			\caption{A spanning tree of $\Gamma$ and the associated fundamental cycle.}
			\label{fig:unimodular-st}
		\end{figure}

		Note that $X(C) = X(\overline{C}) = 6$ and hence $\mathbf{U}(\Delta)$ is unimodular by Theorem~\ref{thm:unimodular_fundamental_cycles}.
    \end{example}

    Finally, we show how in the case of ($P$)-closed groups, the formula for the modular function of Proposition~\ref{prop:unimodular_circuits} and a version of the criterion~\ref{itm:circuit_condition} may also be obtained using the theory developed in Sections~\ref{sec:axes_of_translation} and \ref{sec:scale_of_translations}. This is based on the fact that, for a totally disconnected locally compact group $G$, the modular function is given by $\Delta_{G}(g)=s(g^{-1})/s(g)$ for all $g\in G$, see \cite[Corollary 1]{Wil94}.

    To this end, let $G\le\Aut(T)$ be a ($P$)-closed group with compact vertex stabilisers. For the purpose of computing the modular function, it suffices to consider translations of $G$ by Remark~\ref{rem:scale_why_translations}. When $g\in G$ translates along a line $L$ of $T$, then $g^{-1}$ is a translation of the same length along $L$ in the opposite direction. In terms of translatable circuits, the inverse of a translation associated to some circuit is associated to the \emph{reverse} circuit, as defined below.

    \begin{definition}
        Let $\Delta=(\Gamma,(X_{a}),(G(v)))$ be a local action diagram and let $C=(a_{i},S_{i})_{i=0}^{l-1}$ be a multi-coloured circuit of length $l\in\mathbb{N}$ of $\Delta$. The \textbf{reverse} of $C$ is the multi-coloured circuit $\overline{C}:=(\overline{a_{l-1-i}},\overline{S_{l-1-i}})_{i=0}^{l-1}$, where $\overline{S_{k}}=\{(c,d)\mid (d,c)\in S_{k}\}$ for all $k\in\{0,\ldots,l\!-\!1\}$.
    \end{definition}

    Given a local action diagram $\Delta=(\Gamma,(X_{a}),(G(v)))$ as well as a translatable circuit $C=(a_{i},S_{i})_{i=0}^{l-1}$ of length $l\in\mathbb{N}$ of $\Delta$ and a cover $\smash{\widetilde{C}=(d_{k-1},c_{k})_{k\in\mathbb{Z}}}$ of $C$, the tuple $(c_{l-k},d_{l-1-k})_{k\in\mathbb{Z}}$ is a cover of $\overline{C}$. Put $v_{i}:=o(a_{i})$ for all $i\in\{0,\ldots,l\!-\!1\}$. When $\Delta$ has finite orbits, Corollary~\ref{cor:scale_translatable_circuit} implies that comparing $s(g)$ and $s(g^{-1})$ for a translation $g$ associated to $C$ via Proposition~\ref{prop:translatable_circuit_to_translation} amounts to comparing
    \begin{align*}
        s(g)\!&=\!\prod\limits_{i=0}^{l-1}\abs*{G(o(a_{i}))_{c_{i}}\cdot d_{i-1}}=\prod_{i=0}^{l-1}\abs*{G(v_{i})_{c_{i}}\cdot d_{i-1}}, \\
        s(g^{-1})\!&=\!\prod\limits_{i=0}^{l-1}\abs*{G(o(\overline{a_{l-1-i}}))_{d_{l-1-i}}\cdot c_{l-i}}\!=\!\prod_{i=0}^{l-1}\abs*{G(v_{l-i})_{d_{l-1-i}}\cdot c_{l-i}}\!=\!\prod\limits_{i=0}^{l-1}\abs*{G(v_{i})_{d_{i-1}}\cdot c_{i}}.
    \end{align*}

    The following lemma reduces this analysis to comparing sizes of orbits along $C$.
    \begin{lemma}\label{lem:unimodular_orbit_sizes}
        Let $\Delta=(\Gamma,(X_{a}),(G(v)))$ be a local action diagram with finite orbits. Further, let $C=(a_{i})_{i=0}^{l-1}$ be a circuit of length $l\in\mathbb{N}$ of $\Delta$ and $(d_{i-1},c_{i})\in X_{\overline{a_{i-1}}}\times X_{a_{i}}$ for all $i\in\{0,\ldots,l-1\}$. Then
        \begin{displaymath}
            \frac{\prod_{i=0}^{l-1}\abs*{G(o(a_{i}))_{d_{i-1}}\cdot c_{i}}}{\prod_{i=0}^{l-1}\abs*{G(o(a_{i}))_{c_{i}}\cdot d_{i-1}}} = \frac{\prod_{i=0}^{l-1}\abs*{X_{a_{i}}}}{\prod_{i=0}^{l-1}\abs*{X_{\overline{a_{i}}}}}.
        \end{displaymath}
    \end{lemma}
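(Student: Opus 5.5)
The plan is to prove the identity factor by factor, using a double-counting of the $G(v)$-orbit of the pair $(d_{i-1},c_{i})$ under the diagonal action. Fix $i\in\{0,\ldots,l-1\}$, put $H:=G(o(a_{i}))$, and let $O_{i}\subseteq X_{\overline{a_{i-1}}}\times X_{a_{i}}$ be the orbit of $(d_{i-1},c_{i})$ under the diagonal action of $H$. Since $X_{\overline{a_{i-1}}}$ and $X_{a_{i}}$ are finite orbits of $H$, the set $O_{i}$ is finite, and it can be counted in two ways via the orbit-stabiliser theorem, projecting onto either coordinate.

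Projecting onto the first coordinate gives a surjection $O_{i}\to X_{\overline{a_{i-1}}}$, since $X_{\overline{a_{i-1}}}$ is a single $H$-orbit. The fibre over $d_{i-1}$ is exactly $\{d_{i-1}\}\times H_{d_{i-1}}\cdot c_{i}$. All fibres have the same cardinality, because $h\in H$ maps the fibre over $x$ bijectively onto the fibre over $hx$; this is the same mechanism as in Lemma~\ref{lem:permutation_group_orbits}. Hence
\[
|O_{i}|=\abs*{X_{\overline{a_{i-1}}}}\cdot\abs*{H_{d_{i-1}}\cdot c_{i}}.
\]
Symmetrically, projecting onto the second coordinate yields
\[
|O_{i}|=\abs*{X_{a_{i}}}\cdot\abs*{H_{c_{i}}\cdot d_{i-1}}.
\]
Equating the two expressions gives
\[
\frac{\abs*{G(o(a_{i}))_{d_{i-1}}\cdot c_{i}}}{\abs*{G(o(a_{i}))_{c_{i}}\cdot d_{i-1}}}=\frac{\abs*{X_{a_{i}}}}{\abs*{X_{\overline{a_{i-1}}}}}.
\]
Note that this step does not need $(d_{i-1},c_{i})$ to be non-diagonal. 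It only uses that both colour sets are single finite orbits of $G(o(a_{i}))$, which holds by Definition~\ref{def:local_action_diagram}.

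Finally, take the product over $i\in\{0,\ldots,l-1\}$. The numerator on the right becomes $\prod_{i}\abs*{X_{a_{i}}}$. Because indices are cyclic modulo $l$, the denominator $\prod_{i}\abs*{X_{\overline{a_{i-1}}}}$ is a reindexing of $\prod_{i}\abs*{X_{\overline{a_{i}}}}$. This yields the claimed identity.

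There is no real obstacle here. The only point needing care is the equal-fibre-size claim, together with the fact that all quantities are finite so that the division makes sense; the finite-orbits hypothesis guarantees the latter.
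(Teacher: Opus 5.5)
Your proof is correct and is essentially the paper's argument. The paper expresses $[G(v_{i}):G(v_{i})_{c_{i},d_{i-1}}]$ as a product of two indices in two different ways via the orbit--stabiliser theorem. Your double count of the diagonal orbit $O_{i}$ does the same thing, since $|O_{i}|$ is exactly that index, and you then take the product over $i$ with the same cyclic reindexing.
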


    \begin{proof}
        Put $v_{i}:=o(a_{i})$ for all $i\in\{0,\ldots,l\!-\!1\}$. By the orbit-stabiliser theorem, all of the above factors can be rewritten as indices: for all $i\in\{0,\ldots,l\!-\!1\}$ we have $\abs*{G(o(a_{i}))_{c_{i}}\cdot d_{i-1}}\!=\![G(v_{i})_{c_{i}}:G(v_{i})_{c_{i},d_{i-1}}]$ and $\abs*{X_{a_{i}}}\!=\!\abs*{G(v_{i})\cdot c_{i}}=[G(v_{i}):G(v_{i})_{c_{i}}]$, and similarly for the denominators. Hence the assertion follows from writing
        \begin{align*}            [G(v_{i}):G(v_{i})_{c_{i},d_{i-1}}]&=[G(v_{i}):G(v_{i})_{c_{i}}]\cdot [G(v_{i})_{c_{i}}:G(v_{i})_{c_{i},d_{i-1}}], \\
        [G(v_{i}):G(v_{i})_{c_{i},d_{i-1}}]&=[G(v_{i}):G(v_{i})_{d_{i-1}}]\cdot [G(v_{i})_{d_{i-1}}:G(v_{i})_{c_{i},d_{i-1}}],
        \end{align*}
        and taking the product over all $i\in\{0,\ldots,l\!-\!1\}$.
    \end{proof}

    By virtue of Lemma~\ref{lem:unimodular_orbit_sizes}, we obtain the following specialisation of Proposition~\ref{prop:unimodular_circuits}.
    \begin{proposition}\label{prop:unimodular_translatable_circuits}
        Let $\Delta=(\Gamma,(X_{a}),(G(v)))$ be a local action diagram with finite orbits, $\mathbf{T}$ be a $\Delta$-tree and $G:=\mathbf{U}_{\mathbf{T}}(\Delta)$. Further, let $g\in G$ be a translation with associated (minimal) translatable circuit $(a_{i},S_{i})_{i=0}^{l-1}$ via Proposition~\ref{prop:translation_to_translatable_circuit}. Then
        \begin{displaymath}
            \Delta_{G}(g)=\frac{\prod_{i=0}^{l-1}\abs*{X_{a_{i}}}}{\prod_{i=0}^{l-1}\abs*{X_{\overline{a_{i}}}}}
        \end{displaymath}
        In particular, $G$ is unimodular if and only if $\prod_{i=0}^{l-1}\abs*{X_{a_{i}}}=\prod_{i=0}^{l-1}\abs*{X_{\overline{a_{i}}}}$ for all (minimal) translatable circuits of $\Delta$.
    \end{proposition}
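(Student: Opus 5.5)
We plan to derive the formula from Willis' identity $\Delta_{G}(g)=s(g^{-1})/s(g)$ \cite[Corollary 1]{Wil94}, combining Corollary~\ref{cor:scale_translatable_circuit} with Lemma~\ref{lem:unimodular_orbit_sizes}.

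\emph{Setting up.} Let $g$ be a translation of length $l$ along $L=(a_{k},\overline{a_{k}})_{k\in\mathbb{Z}}$ with associated translatable circuit $C=(\pi(a_{i}),S_{i})_{i=0}^{l-1}$ and cover $\widetilde{C}=(d_{k-1},c_{k})_{k\in\mathbb{Z}}$ given by Proposition~\ref{prop:translation_to_translatable_circuit}. Corollary~\ref{cor:scale_translatable_circuit}, together with its cyclic reindexing (justified by Lemma~\ref{lem:permutation_group_orbits} and $l$-periodicity of the orbits $S_{i}$), gives $s(g)=\prod_{i=0}^{l-1}\abs{G(v_{i})_{c_{i}}\cdot d_{i-1}}$ with $v_{i}=o(\pi(a_{i}))$. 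The inverse $g^{-1}$ translates along the same line in the opposite direction, so its circuit is the reverse $\overline{C}$ with cover $(c_{l-k},d_{l-1-k})_{k}$. Applying Corollary~\ref{cor:scale_translatable_circuit} again and reindexing $i\mapsto l-i$ (indices mod $l$), as in the displayed computation preceding Lemma~\ref{lem:unimodular_orbit_sizes}, yields $s(g^{-1})=\prod_{i=0}^{l-1}\abs{G(v_{i})_{d_{i-1}}\cdot c_{i}}$.

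\emph{Computing.} Dividing and applying Lemma~\ref{lem:unimodular_orbit_sizes} to the underlying circuit $(\pi(a_{i}))_{i}$ with the pairs $(d_{i-1},c_{i})$ gives exactly $\prod\abs{X_{a_{i}}}/\prod\abs{X_{\overline{a_{i}}}}$. The main care point is the bookkeeping in the reversal: verifying that $\overline{C}$ is indeed the circuit produced by Proposition~\ref{prop:translation_to_translatable_circuit} for $g^{-1}$ at a suitable start vertex, and that the shift in start point does not alter the scale (Corollary~\ref{cor:scale_under_relations}\ref{item:shift}). If the circuit is merely minimal rather than the one of length equal to the translation length, note that the right-hand side is multiplicative under concatenation, consistent with Corollary~\ref{cor:scale_under_relations}\ref{item:concatenation}, so passing to $C^{n}$ raises both sides to the $n$-th power; since $\Delta_{G}$ is a homomorphism to $\mathbb{R}_{>0}$, taking $n$-th roots resolves the ambiguity.

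\emph{Unimodularity.} By Remark~\ref{rem:scale_why_translations}, elliptic elements and inversions lie in compact open subgroups, hence have $\Delta_{G}=1$. So $G$ is unimodular iff $\Delta_{G}(g)=1$ for all translations. By Theorem~\ref{thm:axes_circuits_correspondence} every translatable circuit arises from some translation (Proposition~\ref{prop:translatable_circuit_to_translation}), and every class contains a minimal representative; by multiplicativity under concatenation and invariance under shift, the condition for all translatable circuits is equivalent to that for minimal ones. This gives the stated equivalence.
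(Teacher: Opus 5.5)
Your proposal is correct and is essentially the paper's own argument. Like the paper, you use Willis' identity $\Delta_{G}(g)=s(g^{-1})/s(g)$, the scale formula of Corollary~\ref{cor:scale_translatable_circuit} for $g$ and (via the reversed cover) for $g^{-1}$, and Lemma~\ref{lem:unimodular_orbit_sizes} for the quotient, then Remark~\ref{rem:scale_why_translations}, Proposition~\ref{prop:translatable_circuit_to_translation} and Corollary~\ref{cor:scale_under_relations} for the unimodularity criterion.

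Only your aside about ``taking $n$-th roots'' is muddled. If the circuit of $g$ is $M^{n}$ with $M$ minimal, the right-hand side evaluated on $M$ equals $\Delta_{G}(g)^{1/n}$, not $\Delta_{G}(g)$. So the formula must be read with the circuit whose length is the translation length of $g$, which is what your main computation does; minimal circuits enter only in the final criterion.
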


    \begin{proof}
        The formula for $\Delta_{G}(g)$ follows from Lemma~\ref{lem:unimodular_orbit_sizes} and the preceding discussion. The criterion then follows from Remark~\ref{rem:scale_why_translations}, Proposition~\ref{prop:translatable_circuit_to_translation} and Corollary~\ref{cor:scale_under_relations}.
    \end{proof}

    \section{Uniscalar (\texorpdfstring{$P$}{P})-closed Groups}

    In this section, we characterise uniscalarity of $(P)$-closed groups in terms of local action diagrams. By drawing structural conclusions from uniscalarity, we provide one possible answer to a question of Thomas Weigel for classes of t.d.l.c. groups in which uniscalarity implies being compact-by-discrete. Recall that, by \cite[Theorem 2.5]{RS26}, groups acting on trees fall into six mutually exclusive types; see also \cite[Section~2]{CT24}\todo{update reference when known}. The following lemma reduces the question of uniscalarity to the general type.

    \begin{lemma}\label{lem:type_uniscalar}
        \hspace{-1.8mm} Let $T$ be a tree. If $G\!\le\!\Aut(T)$ has compact vertex stabilisers and~type
        \begin{enumerate}[(i)]
            \item (\emph{Fixed Vertex}), (\emph{Inversion}), (\emph{Lineal}) or (\emph{Horocyclic}), then $G$ is uniscalar.
            \item (\emph{Focal}) then $G$ is not uniscalar.
        \end{enumerate}
    \end{lemma}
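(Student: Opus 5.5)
We split by type, following the classification of \cite[Theorem 2.5]{RS26}. Throughout, recall from Remark~\ref{rem:scale_why_translations} that when $G$ has compact vertex stabilisers, elliptic elements and inversions have trivial scale, since $G_{v}$ respectively $G_{\{a,\overline{a}\}}$ is compact open and normalised by the element. Hence uniscalarity of $G$ is equivalent to $s(g)=s(g^{-1})=1$ for every translation $g\in G$.

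For part (i), the types (\emph{Fixed Vertex}), (\emph{Inversion}) and (\emph{Horocyclic}) contain no translations at all; this is immediate from the definitions in \cite[Theorem 2.5]{RS26}. So every element is elliptic or an inversion, and $G$ is uniscalar by the observation above. For type (\emph{Lineal}), $G$ stabilises a line $L$ setwise, and every translation in $G$ translates along $L$. We take $U:=G_{L}$, the pointwise stabiliser of $L$, and check three things.
\begin{enumerate}[(a)]
\item $U$ is compact, being closed in the compact group $G_{v}$ for $v\in VL$.
\item $U$ is open. A vertex stabiliser $G_{v}$ with $v\in VL$ permutes the two ends of $L$, so $G_{v}\cap G_{\partial L}$ has index at most $2$ in $G_{v}$. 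An element fixing $v$ and both ends of $L$ fixes $L$ pointwise, so this subgroup is $G_{L}$, which is therefore open.
\item $U$ is normal in $G$, because $G$ preserves $L$ setwise.
\end{enumerate}
Thus $U$ is a compact open normal subgroup, so $[gUg^{-1}:gUg^{-1}\cap U]=1$ for all $g$, and $G$ is uniscalar.

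For part (ii), a focal group fixes an end $\xi$ and contains a translation $g$, whose axis $L$ necessarily has $\xi$ as one of its ends. The plan is to show $s(g)\neq 1$ or $s(g^{-1})\neq 1$ using the modular function. Take $v\in VL$. As in the proof of Proposition~\ref{prop:unimodular_circuits},
\begin{displaymath}
\Delta_{G}(g)=\mu(G_{v})/\mu(G_{gv}).
\end{displaymath}
Say $g$ translates towards $\xi$, replacing $g$ by $g^{-1}$ if necessary. Every element of $G$ fixes $\xi$, so any element fixing $v$ also fixes the ray from $v$ to $\xi$. In particular $G_{v}\le G_{gv}$, and therefore $\Delta_{G}(g)=[G_{gv}:G_{v}]^{-1}$. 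It then suffices to show this index exceeds $1$, since $\Delta_{G}(g)=s(g^{-1})/s(g)\neq 1$ forces a nontrivial scale.

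The main obstacle is exactly this step: proving $G_{gv}\neq G_{v}$, that is, that $G_{gv}$ moves $v$. Suppose instead $G_{g^{n}v}=G_{v}$ for all $n\in\mathbb{Z}$. Then $G_{v}$ fixes the whole axis $L$ and is normalised by $g$. Since $G$ is focal (not horocyclic, not lineal), I would argue that there are translations with distinct axes all ending at $\xi$. Such a translation $h$ has an axis diverging from $L$ at some vertex $w$, and conjugating $h$ suitably by powers of $g$ produces an element of $G_{g^{n}v}$ that does not fix $v$. This is the delicate part. An alternative I would try first is to apply the scale formula from the proof of Proposition~\ref{prop:translation_scale}, $s(g)=|G_{gv,g^{2}v}\cdot v|$, directly. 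Here too the content is ruling out that stabilisers along the ray towards $\xi$ stabilise, using that the orbit of $\xi$ consideration forces $|\partial$-branching$|$ at the axis in a focal group.
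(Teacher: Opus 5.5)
\textbf{Part (i) and the reduction in (ii) are fine; the key step of the focal case is missing.}

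Part (i) is correct and is essentially the paper's argument: your $G_L$ is the paper's arc stabiliser $G_a$ for $a\in AL$.

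In part (ii) your reduction is also sound, and in fact sharper than you state. Since $g^{-1}G_vg=G_{g^{-1}v}\le G_v$, you get $s(g^{-1})=1$. So $\Delta_G(g)=s(g^{-1})/s(g)$ gives $s(g)=[G_{gv}:G_v]$ exactly. This would replace the paper's use of M\"oller's formula along the chain of arc stabilisers $G_{g^{-n}a}$.

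However, the whole content of the focal case is the step you leave open: showing $G_{gv}\neq G_v$, i.e.\ finding an element that fixes a vertex of $L$ but does not fix $L$ pointwise. The mechanism you sketch cannot supply this. Any conjugate $g^{n}hg^{-n}$ of a translation $h$ is again a translation, so it lies in no vertex stabiliser. Your alternative via $s(g)=|G_{gv,g^{2}v}\cdot v|$ is not available either. It relies on tidiness of $G_{v,gv}$, which the paper only establishes for $(P)$-closed groups, whereas this lemma concerns arbitrary $G$ with compact vertex stabilisers.

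\textbf{The missing idea.} Let $\omega_-$ be the other end of $L$ and $l$ the translation length of $g$. Since $\xi$ is the \emph{unique} fixed end, there is $h'\in G$ with $h'\omega_-\neq\omega_-$. It is not an inversion, because it fixes $\xi$. There are two cases.

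If $h'$ is elliptic, it fixes a ray towards $\xi$, which eventually lies in $L$. So the vertices of $L$ fixed by $h'$ form a ray $[w,\xi)$ with $w$ a vertex. This ray is proper because $h'$ moves $\omega_-$. Hence $h'\in G_w\setminus G_{g^{-1}w}$.

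If $h'$ is a translation, its axis $L'$ ends at $\xi$ and differs from $L$. Replacing $h'$ by its inverse, you may assume it translates towards $\xi$ with some length $l'$. Let $w$ be the vertex at which $L$ and $L'$ branch. Take the \emph{product} $h:=(h')^{-l}g^{l'}$, not a conjugate; it has zero net displacement towards $\xi$. It fixes $w$, and it maps the arc at $w$ pointing along $L$ to $\omega_-$ onto the arc at $w$ pointing to the other end of $L'$. So again $h\in G_w\setminus G_{g^{-1}w}$.

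In both cases $[G_{gu}:G_u]\ge 2$ for $u:=g^{-1}w$. Since $\mu(G_v)/\mu(G_{gv})=\Delta_G(g)$ does not depend on $v$, your argument then concludes. This is precisely the step the paper carries out before invoking M\"oller's formula.
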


    \begin{proof}
         If $G\le\Aut(T)$ is of type (\emph{Fixed Vertex}), (\emph{Inversion}) or (\emph{Horocyclic}) then it does not contain any translations. Hence $G$ is uniscalar by Remark~\ref{rem:scale_why_translations}.
         
         If $G$ is of type (\emph{Lineal}), the stabiliser of an arc belonging to the line between the two fixed ends fixes the entire line. It is thus a compact open subgroup normalised by any translation in $G$ and hence $G$ is uniscalar in this case as well.

         Finally, suppose $G$ is of type (\emph{Focal}). Let $\omega_{+}\in\partial T$ denote the unique end fixed by $G$ and let $g\in G$ be a translation of length $l\in\mathbb{N}$ along a line $L$ of $T$ towards $\omega_{+}$. Furthermore, let $\omega_{-}$ denote the opposite end of $L$. Since $G$ fixes a unique end, there is an element $h'\in G$ such that $h'\omega_{-}\neq\omega_{-}$. Using $h'$, we show that there exists an element $h\in G$ which fixes a vertex $v\in VL$ and moves the arc $b\in o^{-1}(v)$ oriented towards $\omega_{-}$. Letting $a:=gb\in AL$ we then have $h\in G_{a}$ and thus $|G_{a}\cdot b|\ge 2$.
         
         Since $h'\in G$ fixes $\omega_{+}$, it cannot be an inversion. If $h'$ fixes a vertex then, since $h'$ fixes $\omega_{+}$, it also fixes a vertex $v\in VL$. Since $h'\omega_{-}\neq\omega_{-}$ we conclude that $h'$ moves the arc $b\in o^{-1}(v)$ oriented towards $\omega_{-}$. Thus we may use $h:=h'$.

         Otherwise, $h'$ is a translation of some length $l'$ along an axis $L'$ which eventually coincides with $L$. Passing to $(h')^{-1}$ if necessary, we may assume that $h'$ translates towards $\omega_{+}$. Let $v\in VL\cap VL'$ denote the vertex at which $L$ and $L'$ branch. Then $\smash{h:=(h')^{-l}g^{l'}\in G}$ fixes $v$ and moves the arc $b\in o^{-1}(v)$ oriented towards $\omega_{-}$.

         Now, given $n\in\mathbb{N}_{0}$, define $h_{n}:=g^{-n}hg^{n}\in G$ as well as $v_{n}:=g^{-n}v\in VL$ and $b_{n}:=g^{-n}b\in o^{-1}(v_{n})$. Then we have $h_{0}=h$ as well as $v_{0}=v$ and $b_{0}=b$. Moreover, $h_{n}$ fixes $v_{n}$ and moves $b_{n}$. Finally, set $U_{n}:=G_{g^{-n}a}$ for all $n\in\mathbb{N}_{0}$. Notice that $h_{n}\in U_{n}$ and $U_{n}\le U_{n-1}\le\ U_{n-2}\le\cdots\le U_{1}\le U_{0}$ for all $n\in\mathbb{N}_{0}$. Furthermore, $[U_{n}:U_{n+1}]\ge 2$ for all $n\in\mathbb{N}_{0}$ by the above. Using \cite[Theorem 7.7]{Moe02}, we obtain
         \begin{displaymath}
             s(g)=\!\lim_{n\to\infty}[U_{0}:U_{0}\cap g^{-n}U_{0}g^{n}]^{\frac{1}{n}}=\!\lim_{n\to\infty}[U_{0}:U_{n}]^{\frac{1}{n}}=\!\lim_{n\to\infty}\left(\prod_{k=0}^{n}[U_{k}:U_{k+1}]\right)^{\frac{1}{n}}\!\!\!\ge 2,
         \end{displaymath}
         showing that $G$ is not uniscalar.
    \end{proof}
        
    In the case of ($P$)-closed groups, the action type can be determined from the local action diagram, see \cite[Theorem 2.6]{CT24}\todo{update reference when known}, and the following statement characterises uniscalarity for the general type. It is instructive to compare this to the characterisation of discreteness for $(P)$-closed groups, see \cite[Theorem 3.1]{CT24}\todo{update reference when known}.
    \begin{proposition}\label{prop:general_type_uniscalar}
        Let $\Delta=(\Gamma,(X_{a}),(G(v)))$ be a local action diagram with finite orbits, $\mathbf{T}$ be a $\Delta$-tree and $G:=\mathbf{U}_{\mathbf{T}}(\Delta)$. Suppose that $G$ is of type (\emph{General}) and let $\Gamma'$ be the unique smallest cotree of $\Gamma$. Then $G$ is uniscalar if and only if for all $v\in V\Gamma'$ the restriction of $G(v)\!\le\!\Sym(X_{v})$ to $\bigsqcup_{a\in o^{-1}(v)\cap A\Gamma'}X_{a}\!\subseteq\! X_{v}$ is semiregular.
    \end{proposition}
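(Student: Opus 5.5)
The plan is to reduce uniscalarity to scale computations for translations and then read off the local condition from the product formula of Corollary~\ref{cor:scale_translatable_circuit}. Since $\Delta$ has finite orbits, $G$ has compact vertex stabilisers. By Remark~\ref{rem:scale_why_translations}, elliptic elements and inversions have scale $1$, so $G$ is uniscalar if and only if $s(C)=1$ for every $C\in C_{\Delta}$. By Corollary~\ref{cor:scale_translatable_circuit}, $s(C)$ is a product of positive integers $\abs*{G(o(a_{i}))_{c_{i}}\cdot d_{i-1}}$. Hence $s(C)=1$ if and only if every factor equals $1$, that is, $G(o(a_{i}))_{c_{i}}$ fixes $d_{i-1}$ for every consecutive pair $(d_{i-1},c_{i})$ of some (equivalently, by Lemma~\ref{lem:permutation_group_orbits}, any) cover of $C$.

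Next I would locate the translatable circuits inside $\Gamma'$. Let $T'\subseteq T$ be the unique minimal $G$-invariant subtree, which exists because $G$ is of type (\emph{General}). I would use the fact that $\pi(T')=\Gamma'$, as in \cite{RS26,CT24}. Every translation axis lies in $T'$, so by Proposition~\ref{prop:translation_to_translatable_circuit} every translatable circuit has all arcs in $A\Gamma'$. Consequently its consecutive pairs $(d,c)$ satisfy $d\neq c$ and both lie in $X'_{v}:=\bigsqcup_{a\in o^{-1}(v)\cap A\Gamma'}X_{a}$ for the relevant $v\in V\Gamma'$.

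The key claim is the converse: every pair $(d,c)\in X'_{v}\times X'_{v}$ with $d\neq c$ occurs as a consecutive pair in a cover of some translatable circuit. To prove it, pick $x\in VT'$ with $\pi(x)=v$ and the two arcs $b,a\in o^{-1}(x)$ with $\mathcal{L}(b)=d$ and $\mathcal{L}(a)=c$. Both arcs lie in $T'$, since $\pi^{-1}(\Gamma')=T'$ by minimality and $\pi$-saturation. The path $t(b),x,t(a)$ is then a geodesic segment of length $2$ in $T'$. I would invoke the standard fact for actions of general type (no fixed end, not lineal) that every finite geodesic segment of the minimal invariant subtree lies on the axis of some hyperbolic element. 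The proof goes as follows: since $T'$ is minimal, each of the two half-trees of $T'$ determined by $b$ and $a$ contains a translation axis. Absence of invariant ends and lines lets one choose hyperbolic elements $h_{1},h_{2}$ whose axes lie in these respective half-trees, and then a suitable product $h_{1}^{m}h_{2}^{n}$, or a ping-pong element, has an axis passing through $b$ and $a$. Applying Proposition~\ref{prop:translation_to_translatable_circuit} to that axis with start vertex $x$ yields a translatable circuit whose cover contains $(d,c)$. I expect this geometric step to be the main obstacle, in particular handling the case where the half-trees contain axes only of elements whose axes overlap the segment. If a direct ping-pong argument fails, I would first try to exploit the $(P)$-closedness via Lemma~\ref{lem:universal_extension}, extending the segment to a line in $T'$ that is labelled periodically.

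Combining the two directions, $G$ is uniscalar if and only if, for every $v\in V\Gamma'$ and all distinct $c,d\in X'_{v}$, the stabiliser $G(v)_{c}$ fixes $d$. This says exactly that each element of $G(v)$ fixing a point of $X'_{v}$ acts trivially on $X'_{v}$. Since $X'_{v}$ is $G(v)$-invariant as a union of orbits, this is precisely semiregularity of the restriction $G(v)^{X'_{v}}$, which proves the proposition.
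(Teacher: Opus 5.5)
Your proposal is correct, but it goes through the geometry of the tree in both directions, whereas the paper argues mostly inside the local action diagram.

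\emph{Semiregularity implies uniscalarity.} The paper argues combinatorially. If a translatable circuit left $\Gamma'$, the cotree structure would force it to backtrack, $a_{i}=\overline{a_{i-1}}$, along an arc oriented towards $\Gamma'$. That arc has a singleton colour set, so $S_{i}$ would be the diagonal orbit, a contradiction. You instead note that every axis lies in the minimal invariant subtree $T'=\pi^{-1}(\Gamma')$. (To realise an arbitrary $C\in C_{\Delta}$ by an axis you need Proposition~\ref{prop:translatable_circuit_to_translation}, not its converse.)

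\emph{The converse.} The paper imports \cite[Lemma 3.3]{CT24} to get a translatable circuit with $(d,c)\in S_{0}$. You prove the needed statement from scratch: every length-two segment of $T'$ lies on an axis. This works, and the obstacle you anticipate does not arise. Let $e$ lie on the axis of a hyperbolic $g$ (possible since $T'$ is a union of axes), and write $A_{h}$ for the axis of $h$.
\begin{itemize}
\item General type gives a hyperbolic $h$ with $g^{-}$ not an end of $A_{h}$. Otherwise every conjugate $kgk^{-1}$ would have $g^{-}$ as an end, so $Gg^{-}\subseteq\{g^{+},g^{-}\}$, giving a fixed end or an invariant line.
\item For large $n$, the axis $g^{n}A_{h}$ then lies entirely in the half-tree of $T'$ at $e$ on the $g^{+}$ side.
\item So you can choose axes $A_{1}$ and $A_{2}$ entirely inside the disjoint half-trees beyond $b$ and beyond $a$.
\item The product $h_{1}h_{2}$ is hyperbolic, and its axis contains the bridge from $A_{1}$ to $A_{2}$, hence $t(b),x,t(a)$. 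Invert it if needed to fix the direction.
\end{itemize}
No fallback via Lemma~\ref{lem:universal_extension} is required.

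Your route is more self-contained and conceptual, and it removes the dependence on the external lemma. It does rely on the identification $T'=\pi^{-1}(\Gamma')$ in both directions, which the paper also uses elsewhere. The paper's forward direction needs no tree geometry at all, which fits its aim of computing directly from $\Delta$.
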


    \begin{proof}
        First, suppose that for every $v\in V\Gamma'$ the restriction of $G(v)\le\Sym(X_{v})$ to $\smash{\bigsqcup_{a\in o^{-1}(v)\cap A\Gamma'}X_{a}\subseteq X_{v}}$ is semiregular. Since $\Delta$ has finite orbits, it suffices to consider the scale of translations of $G$, which in turn stem from translatable circuits by Section~\ref{sec:scale_of_translations}. Let $C=(a_{i},S_{i})_{i=0}^{l-1}$ be a translatable circuit of length $l$ of $\Delta$. We argue that $a_{i}\in A\Gamma'$ for all $i\in\{0,\ldots,l-1\}$. Otherwise, by virtue of $\Gamma'$ being a cotree, there would exist a pair of (cyclically) consecutive arcs $a_{i-1},a_{i}$ ($i\in\{0,\ldots,l-1\}$ such that $a_{i}=\overline{a_{i-1}}$ is oriented towards $\Gamma'$ and hence $S_{i}\subseteq X_{\overline{a_{i-1}}}\times X_{a_{i}}$ is necessarily the diagonal orbit of $G(o(a_{i}))$, contradicting that $C$ is translatable. Combining Corollary~\ref{cor:scale_translatable_circuit} with the semiregularity assumption, we conclude that any translation $g\in G$ associated to $C$ via Proposition~\ref{prop:translatable_circuit_to_translation} satisfies $s(g)=1$.
        
        Conversely, assume that there is a vertex $v\in V\Gamma'$ such that $G(v)$ does not act semiregularly when restricted to $\bigsqcup_{a\in o^{-1}(v)\cap A\Gamma'}X_{a}$. Then there are colours $c,d\in X_{v}$ such that $p(c),p(d)\in A\Gamma'$ and $|G(v)_{c}\cdot d|\ge 2$. In particular, $|X_{p(d)}|\ge 2$. Using \cite[Lemma 3.3]{CT24}\todo{update reference when known}, construct a translatable circuit $(a_{i},S_{i})_{i=0}^{l-1}$ such that $o(a_{0})=v$ and $(d,c)\in S_{0}$. Then the scale of any translation $g\in G$ associated to $C$ via Proposition~\ref{prop:translatable_circuit_to_translation} satisfies $s(g)\ge 2$ by Corollary~\ref{cor:scale_translatable_circuit}.
    \end{proof}
    Next, we examine the structure of uniscalar $(P)$-closed groups. In all cases but the (\emph{Horocyclic}) one, uniscalarity is exhibited through a compact open normal subgroup.
    \begin{theorem}\label{thm:uniscalar_structure}
         Let $\Delta=(\Gamma,(X_{a}),(G(v)))$ be a local action diagram with finite orbits, $\mathbf{T}=(T,\pi,\mathcal{L})$ be a $\Delta$-tree and $G:=\mathbf{U}_{\mathbf{T}}(\Delta)$. Then $G$ is uniscalar if and only if $G$ is either of type (\emph{Horocyclic}) or contains a compact open normal subgroup.
    \end{theorem}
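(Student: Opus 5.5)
The plan is to go through the six action types of \cite[Theorem 2.5]{RS26} and use Lemma~\ref{lem:type_uniscalar} and Proposition~\ref{prop:general_type_uniscalar}. One direction is quick. If $G$ has a compact open normal subgroup $U$, then $U$ is normalised by every $g\in G$, so $[gUg^{-1}:gUg^{-1}\cap U]=1$ and $s(g)=1$. If $G$ is of type (\emph{Horocyclic}), it is uniscalar by Lemma~\ref{lem:type_uniscalar}(i).

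For the converse, assume $G$ is uniscalar and not of type (\emph{Horocyclic}). By Lemma~\ref{lem:type_uniscalar}(ii), $G$ is not of type (\emph{Focal}), so we check the four remaining types.

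\emph{Fixed Vertex.} If $G$ fixes a vertex $v$, then $G=G_v$ is compact open and normal.

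\emph{Inversion.} If $G$ stabilises an edge $\{a,\overline{a}\}$, then $G=G_{\{a,\overline a\}}$, which is compact since vertex stabilisers are, and open since it contains $G_a$.

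\emph{Lineal.} $G$ preserves a line $L$ (it fixes or swaps its two ends). The pointwise stabiliser $G_L$ is normal, because $gG_Lg^{-1}=G_{gL}=G_L$. It is compact as a closed subgroup of $G_v$ for $v\in VL$. It is open because, as noted in the proof of Lemma~\ref{lem:type_uniscalar}, the stabiliser of an arc of $L$ already fixes all of $L$; this uses that $G$ cannot move $L$ while fixing an arc of it, since it preserves the end pair and the arc pins down $L$. So $G_L$ contains an open arc stabiliser.

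\emph{General} (the main case). By Proposition~\ref{prop:general_type_uniscalar}, for every $v\in V\Gamma'$ the local action $G(v)$, restricted to the colours of arcs in the minimal cotree $\Gamma'$, is semiregular. Let $T'=\pi^{-1}(\Gamma')$. I would show that this is exactly the minimal $G$-invariant subtree, which follows from the cotree description in \cite{RS26}. Set $K:=G_{T'}$, the pointwise stabiliser of $T'$. It is normal because $T'$ is $G$-invariant, and compact because it lies in a vertex stabiliser. The main obstacle is openness: we need that $G_a$ fixes $T'$ pointwise for some arc $a$ of $T'$.

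To prove openness, propagate outward from $a$. Suppose $h\in G_a$ fixes an arc $b=(w,u)$ of $T'$. Then $\sigma_{\mathcal L}(h,w)$ fixes the colour $\mathcal L(b)$, which has type in $A\Gamma'$. By semiregularity, $\sigma_{\mathcal L}(h,w)$ acts trivially on all colours of types in $A\Gamma'$, so $h$ fixes every arc of $T'$ at $w$. This also covers $\overline b$ at $u$. Induction on the distance from $a$ inside $T'$, which is connected, then shows that $h$ fixes $T'$ pointwise. Hence $G_a\le K$ and $K$ is open.

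One subtlety is that semiregularity concerns the closure of the restricted local action rather than an actual stabiliser. However, semiregularity of the restriction is exactly the statement that the stabiliser of one colour in $X_v\cap\pi^{-1}$-colours of $\Gamma'$ fixes all of them, which is what the induction uses. I would also double-check that the (\emph{General}) case really has $\Gamma'$ nonempty and that $T'$ is exactly $\pi^{-1}(\Gamma')$.
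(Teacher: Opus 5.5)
Your proof is correct and follows the paper's argument. You use the same case split over the six types, with $G_L$ as the compact open normal subgroup in the (\emph{Lineal}) case and $K=G_{T'}$, $T'=\pi^{-1}(\Gamma')$, in the (\emph{General}) case via Proposition~\ref{prop:general_type_uniscalar}. Your arc-by-arc propagation, using semiregularity of $\sigma_{\mathcal L}(h,w)$ on the $\Gamma'$-colours, simply spells out the step the paper compresses into ``$K=G_a$''.
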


    \begin{proof}
        Groups of type (\emph{Horocyclic}) are uniscalar by Lemma~\ref{lem:type_uniscalar}. In general, the existence of a compact open normal subgroup guarantees uniscalarity by Section~\ref{sec:willis_theory}.
        
        If $G$ is of type (\emph{Fixed Vertex}) or (\emph{Inversion}) then $G$ is compact by Lemma~\ref{prop:permutation_topology_compact}. If $G$ is of type (\emph{Lineal}), let $L$ denote the line between the two fixed ends of $G$ and consider $K:=G_{L}$. For any arc $a\in AL$ we have $K=G_{a}$ and hence $K$ is a compact open subgroup of $G$. Moreover, since $L$ is $G$-invariant, $K$ is normal in $G$. 

        By Lemma~\ref{lem:type_uniscalar}, groups of type (\emph{Focal}) are never uniscalar.

        Finally, suppose $G$ is of type (\emph{General}). Then $\Delta$ contains a unique minimal cotree $\Gamma'$ by \cite[Theorem 2.6]{CT24}\todo{update reference when known}. Let $T':=\pi^{-1}(\Gamma')$ denote its preimage in $T$, the unique smallest $G$-invariant subtree of $T$. Consider the subgroup $K:=G_{T'}$ of $G$. Given any arc $a\in AT'$, Proposition~\ref{prop:general_type_uniscalar} implies that $K=G_{a}$ is compact open. Since $T'$ is $G$-invariant, $K$ is also normal in $G$. 
    \end{proof}
    
    Complementing Theorem~\ref{thm:uniscalar_structure}, we note that horocyclic $(P)$-closed groups with compact vertex stabilisers never contain a compact open normal subgroup.

    \begin{proposition}
         Let $\Delta=(\Gamma,(X_{a}),(G(v)))$ be a local action diagram with finite orbits, $\mathbf{T}=(T,\pi,\mathcal{L})$ be a $\Delta$-tree and $G:=\mathbf{U}_{\mathbf{T}}(\Delta)$. If $G$ is of type (\emph{Horocyclic}) then $G$ does not contain a compact open normal subgroup.
    \end{proposition}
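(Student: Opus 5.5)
We argue by contradiction. Suppose $G$ is of type (\emph{Horocyclic}) and $K\trianglelefteq G$ is compact open. Recall that in the horocyclic case $G$ contains no translations, fixes a unique end $\omega\in\partial T$, and does not fix any vertex. The idea is that every element of $G$ then fixes a vertex and the end $\omega$. Consequently every vertex stabiliser $G_{v}$ contains $G_{w}$ for $w$ further along towards $\omega$. So $G$ is an increasing union of compact open vertex stabilisers along a ray towards $\omega$, and this chain cannot stabilise. A compact open normal subgroup would then have to fix a point that is moved arbitrarily far, which is impossible.

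Concretely, fix a ray $(v_{n})_{n\in\mathbb{N}_{0}}$ in $T$ representing $\omega$. Every $g\in G$ fixes $\omega$ and is elliptic. Hence its fixed subtree contains a subray of $(v_{n})$, so
\begin{displaymath}
G=\bigcup_{n}G_{v_{n}}\quad\text{with}\quad G_{v_{0}}\le G_{v_{1}}\le\cdots.
\end{displaymath}
Since $K$ is compact and the $G_{v_{n}}$ form an increasing open cover of $G$, we get $K\le G_{v_{N}}$ for some $N$. Normality then gives $K=gKg^{-1}\le G_{gv_{N}}$ for all $g\in G$. Therefore $K$ fixes the whole orbit $Gv_{N}$ pointwise, and with it the subtree $T'$ spanned by this orbit. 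This subtree is $G$-invariant.

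It remains to show that the pointwise stabiliser $G_{T'}$ is not open, i.e.\ that it contains no arc stabiliser $G_{a}$. This is where ($P$)-closedness and the local action diagram enter, and I expect it to be the main obstacle. Because $G$ fixes no vertex, $T'$ is unbounded. Because $G$ fixes no vertex and contains no translation, the orbit $Gv_{N}$ contains vertices $gv_{N}$ whose paths to $\omega$ meet the ray at $v_{m}$ with $m$ arbitrarily large. Now take an arc $a$ and the vertex $v_{m}$ for $m$ large. At $v_{m}$, the arc pointing back towards the branch containing $gv_{N}$ (away from $\omega$) can be moved by an element of $G_{a}$. To get such an element, I would use Lemma~\ref{lem:universal_extension} to extend a nontrivial local permutation at $v_{m}$ fixing the arc towards $\omega$. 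The extension acts trivially on the half-tree containing $a$ and nontrivially on the branch towards $gv_{N}$. This requires the local action $G(\pi(v_{m}))$ to have a nontrivial stabiliser of the colour pointing to $\omega$, acting nontrivially on the relevant colours.

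To justify this, I would argue as follows. If these point stabilisers were eventually trivial along the ray, then $[G_{v_{m+1}}:G_{v_{m}}]$ would be controlled and the orbits of $G_{v_{m}}$ on the subtree hanging off would be trivial. The vertices $gv_{N}$ would then be fixed by $G_{v_{m}}$ for all $m$. That would yield a $G$-fixed vertex or a horocyclic contradiction, since $G=\bigcup G_{v_{m}}$ would act trivially on those branches while still needing to move $v_{N}$ across them.

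Having found $h\in G_{a}\setminus G_{T'}$ for every arc $a$, we conclude that $G_{T'}$ is not open. But $K\le G_{T'}$ is open, a contradiction.
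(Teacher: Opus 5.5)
Your reduction matches the paper's: $K$ fixes a vertex, and normality then makes $K$ fix the orbit $Gv_N$ pointwise. You get $K\le G_{v_N}$ from compactness and the increasing union $G=\bigcup_n G_{v_n}$; the paper uses compactness and the absence of inversions. The gap is in the step you flag yourself.

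\textbf{Where the construction fails.} You place the nontrivial local permutation at the branch vertex $v_m$ where the path from $gv_N$ to $\omega$ meets the ray. Take $h\in G_a$ with $a$ in the branch at $v_m$ containing $v_{m-1}$. Then $h$ fixes the ray from $o(a)$ to $\omega$, hence fixes $v_{m-1}$, $v_m$ and $v_{m+1}$. So its local action at $v_m$ must fix \emph{two} colours, $\mathcal{L}(v_m,v_{m+1})$ and $\mathcal{L}(v_m,v_{m-1})$, while moving $\mathcal{L}(v_m,gv_{m-1})$. That last colour lies in the same colour set as $\mathcal{L}(v_m,v_{m-1})$, because $gv_m=v_m$.

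Nothing guarantees this. Let $\Gamma$ be a ray $u_0,u_1,\ldots$. Give the arcs $u_n\to u_{n+1}$ singleton colour sets and the arcs $u_{n+1}\to u_n$ two-element colour sets. Let $G(u_{n+1})\cong C_{2}$ act regularly on the latter. Then $\mathbf{U}_{\mathbf{T}}(\Delta)$ is horocyclic with finite orbits, yet the stabiliser of $\mathcal{L}(v_m,v_{m-1})$ in $G(\pi(v_m))$ is trivial. Hence no element of $G_a$ moves the branch of $gv_N$ at $v_m$, for any $m$.

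The condition you try to justify, a nontrivial stabiliser of the colour pointing to $\omega$, is automatic here: all of $G_{v_m}$ fixes $v_{m+1}$. It is not the condition your construction needs, and the paragraph offered to justify it is not an argument. Separately, non-openness of $G_{T'}$ requires $G_S\not\le G_{T'}$ for every finite $S\subseteq VT$; ruling out arc stabilisers alone is not enough.

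\textbf{How to repair it.} Act strictly inside the branch of the orbit vertex. There only one-point stabilisers are needed, and these come from the chain $(G_{v_n})$ increasing strictly infinitely often.

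Choose $N$ so large that $K\le G_{v_{N-1}}$ and $G_{v_N}\neq G_{v_{N-1}}$; then $K$ fixes $Gv_{N-1}$ pointwise. Given a finite $S$, pick $m>N$ such that $S$ lies in the branch at $v_m$ containing $v_{m-1}$, and some $g\in G_{v_m}$ satisfies $gv_{m-1}\neq v_{m-1}$.

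Then $w:=gv_N$ lies in a different branch at $v_m$, and $G_w=gG_{v_N}g^{-1}$ contains an element $h'$ moving $gv_{N-1}$. Since $h'$ fixes the arc from $w$ towards $\omega$, Lemma~\ref{lem:universal_extension} (that is, ($P$)-closedness) gives $h\in G$ agreeing with $h'$ on the half-tree below $w$ and trivial elsewhere.

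This $h$ lies in $G_S$ but moves $gv_{N-1}\in Gv_{N-1}$. So $G_S\not\le K$ for every finite $S$, and $K$ is not open. This is the content of the paper's final step, that every basic open subgroup acts nontrivially on the orbit.
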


    \begin{proof}
        Suppose that $K\unlhd G$ is compact and normal. Since $K$ is compact and acts without inversion, it fixes a vertex $v\in VT$. Arguing as in \cite[Section~3.5]{CT24}\todo{update reference when known}, the $G$-orbit of $v$, which is contained in the horosphere of $v$, must be infinite. Since $K$ is normal in $G$, it fixes said orbit. But, repeating the argument from \cite[Section~3.5]{CT24}\todo{update reference when known}, for any finite set $S\subseteq VT$ the basic open subgroup $H_{S}$ acts non-trivially on said orbit due to how $T$ is constructed from~$\Delta$. Hence $K$ is not open.
    \end{proof}

        For the following, retain the notation of Theorem~\ref{thm:uniscalar_structure} and its proof. We comment on the structure of the discrete quotient $G/K$ of $G$ by the constructed compact open normal subgroup $K$ and the associated group extension $\smash{\xymatrix@C=0.5cm{\! K \ar@{ >->}[r] & G \ar@{->>}[r] & G/K.}}$

        First, we discuss the structure of the discrete group $G/K$. In the (\emph{Fixed Vertex}) and (\emph{Inversion}) cases, the discrete quotient is trivial, whereas in the (\emph{Lineal}) case we have $G/K=G/G_{L}\cong G^{L}\cong\mathbb{Z}$. In the (\emph{General}) case, by \cite[Lemma~2.21]{RS26}, we have $\smash{G/K=G/G_{T'}\cong G^{T'}=\mathbf{U}_{\mathbf{T'}}(\Delta')}$  where $\smash{\Delta'=(\Gamma',(X_{a}),(G(v)^{X_{v}'}))}$ is the local action diagram that arises from $\Gamma'$, that is, $\smash{X_{v}'=\bigsqcup_{a\in o^{-1}(v)\cap A\Gamma'} X_{a}}$, and $\mathbf{T'}=(T',\pi,\mathcal{L})$ is the associated $\Delta'$-tree. Since the permutation group $\smash{G(v)^{X_{v}'}}$ is semiregular for all $v\in V\Gamma'$ by Proposition~\ref{prop:general_type_uniscalar}, we have $\smash{\mathbf{U}_{\mathbf{T'}}(\Delta')_{\tilde{v}}\cong G(\pi(\tilde{v}))^{X_{\pi(\tilde{v})}'}}$ for all $\tilde{v}\in VT'$, and $\mathbf{U}_{\mathbf{T'}}(\Delta')_{\tilde{a}}=\{\id\}$ for all $\tilde{a}\in AT'$. Applying the Bass--Serre structure theorem \cite[\S 5.4 Theorem 13]{Ser80}, see also \cite[Theorem 2.33]{RS26}, to the inversion-free subdivision of $\Gamma'$ defined in \cite[Section 2.5]{RS26} now gives the following proposition, as the stabiliser in $\mathbf{U}_{\mathbf{T'}}(\Delta')$ of any subdivision vertex is isomorphic to $\mathbb{Z}/2\mathbb{Z}$ by semiregularity of the local actions. 
        
        \begin{proposition}
            Let $\Delta=(\Gamma,(X_{a}),(G(v)))$ be a local action diagram with finite orbits and $\mathbf{T}$ be a $\Delta$-tree. If $\mathbf{U}_{\mathbf{T}}(\Delta)$ is discrete and of (\emph{General}) type then
            \begin{displaymath}
                \mathbf{U}_{\mathbf{T}}(\Delta) \cong \bigast_{v \in V\Gamma}G(v) \ast \bigast_{a \in L}\mathbb{Z}/2\mathbb{Z} \ast F_r,
            \end{displaymath}
            where $L$ is the set of all self-reverse loops of $\Gamma$ and $F_r$ is the free group of rank $r$, where $r \in \mathbb{N}_{0} \cup \{\infty\}$ is the topological rank of $\Gamma$ after removing $L$. In particular, if $\mathbf{T}$ is locally finite and $\mathbf{U}_{\mathbf{T}}(\Delta)$ is cocompact then $\mathbf{U}_{\mathbf{T}}(\Delta)$ is virtually free.
        \end{proposition}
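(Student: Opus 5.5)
The plan is to make the preceding discussion precise. First, discreteness forces the diagram itself to be small. Since $G:=\mathbf{U}_{\mathbf{T}}(\Delta)$ is discrete, it is uniscalar. Being of type (\emph{General}), Proposition~\ref{prop:general_type_uniscalar} then says that $G(v)$ restricted to $X_{v}'$ is semiregular for every $v\in V\Gamma'$.

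Next I would show that $\Gamma'=\Gamma$, so that $T'=T$ and $K=G_{T'}=\{\id\}$.
\begin{itemize}
\item Suppose $\Gamma'\neq\Gamma$, and take an arc $a\notin A\Gamma'$ oriented towards $\Gamma'$.
\item If $|X_{a}|\ge 2$, then the rigid stabiliser of the corresponding branch in $T$ is non-trivial on arbitrarily far-out vertices. By Lemma~\ref{lem:universal_extension} it is not open, contradicting discreteness.
\item Otherwise every arc pointing away from $\Gamma'$ carries only one colour. Then $T'$ is still all of $T$ up to hanging rays, and the $G$-action is faithful on $T'$.
\end{itemize}
In either case $K$ is trivial. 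More directly, discreteness means some arc stabiliser $G_{\tilde a}$ is finite. Since $G_{\tilde a}$ contains the pointwise stabiliser of $T'$, which is the compact group $K$, and elements fixing $T'$ can be chosen freely off $T'$, $K$ must be trivial. Hence $G\cong G^{T'}=\mathbf{U}_{\mathbf{T'}}(\Delta')$ with all local actions semiregular.

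Then I apply Bass--Serre theory to the inversion-free subdivision of $\Gamma$ from \cite[Section 2.5]{RS26}. Each self-reverse loop $a\in L$ is replaced by an edge ending in a new subdivision vertex. The vertex groups are computed as follows.
\begin{itemize}
\item At $\tilde v$ the stabiliser is $\cong G(\pi(\tilde v))$. By semiregularity and Property ($P$), an element fixing $\tilde v$ is determined by its local action there, and each element of $G(v)$ extends by Lemma~\ref{lem:universal_extension}.
\item At a subdivision vertex the stabiliser is $\mathbb{Z}/2\mathbb{Z}$, generated by an involutive inversion. Such an inversion exists by Lemma~\ref{lem:universal_extension}, and it is unique because arc stabilisers are trivial.
\item All edge groups are trivial, again because arc stabilisers are trivial.
\end{itemize}
The fundamental group of a graph of groups with trivial edge groups is the free product of the vertex groups with $\pi_{1}$ of the underlying graph. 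The subdivision adds no cycles, so that graph has rank equal to the rank $r$ of $\Gamma$ minus $L$, giving the stated decomposition.

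For the final claim, suppose $\mathbf{T}$ is locally finite and the action is cocompact. Then $\Gamma$ is finite, so $L$ is finite and $r<\infty$. Each $G(v)$ is finite since $X_{v}$ is finite. A finite free product of finite groups and a free group of finite rank is virtually free, by Kurosh's theorem or by the kernel of the map to $\prod G(v)\times\prod\mathbb{Z}/2\mathbb{Z}$.

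I expect the main obstacle to be the step $\Gamma'=\Gamma$ versus the identification $G\cong G^{T'}$. One must check that the vertex groups of the full graph of groups over $\Gamma$, including the vertices outside the cotree, are the full $G(v)$. This requires showing that discreteness forces the hanging parts to contribute exactly the semiregular local actions, via Lemma~\ref{lem:universal_extension}.
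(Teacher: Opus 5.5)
Your skeleton matches the paper's. Discreteness gives uniscalarity, Proposition~\ref{prop:general_type_uniscalar} gives semiregularity on $X'_{v}$, $K=G_{T'}$ is trivial (your ``more direct'' argument for this is fine), so $G\cong G^{T'}=\mathbf{U}_{\mathbf{T'}}(\Delta')$. Bass--Serre theory on the inversion-free subdivision, with trivial arc stabilisers and $\mathbb{Z}/2\mathbb{Z}$ at subdivision vertices, then gives the free product. The genuine error is the step $\Gamma'=\Gamma$, which is false.

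Take $\Gamma$ with vertices $u,w$, a self-reverse loop $\ell$ at $u$ with $X_{\ell}=\{0,1,2\}$, and an edge $\{a,\overline{a}\}$ from $u$ to $w$ with $X_{a}=\{c\}$ and $X_{\overline{a}}=\{\ast\}$. Let $G(u)\cong\mathbb{Z}/3\mathbb{Z}$ act regularly on $X_{\ell}$ and fix $c$, and let $G(w)$ be trivial. The $\Delta$-tree is $T_{3}$ with a pendant leaf at every vertex. Then $G\cong\mathbb{Z}/3\mathbb{Z}\ast\mathbb{Z}/2\mathbb{Z}$ is discrete of type (\emph{General}), yet $\Gamma'$ is just $u$ with its loop $\ell$, so $\Gamma'\neq\Gamma$.

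Your case split cannot rule this out. An arc off $\Gamma'$ oriented towards $\Gamma'$ always carries a single colour: its lift is the unique arc from its origin towards the $G$-invariant tree $T'$. It is therefore fixed by the vertex stabiliser, which induces all of $G(o(a))$ by Lemma~\ref{lem:universal_extension}. So your first case is vacuous, and the second does not give $\Gamma'=\Gamma$.

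The same example breaks your Bass--Serre bookkeeping over all of $\Gamma$. The stabiliser of a pendant leaf $\tilde w$ equals that of the pendant arc, which is $\cong\mathbb{Z}/3\mathbb{Z}$. So the vertex group at $w$ is not $G(w)$, and the edge group of $\{a,\overline{a}\}$ is not trivial.

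The paper runs Bass--Serre over the subdivision of $\Gamma'$ for $\mathbf{U}_{\mathbf{T'}}(\Delta')$, where your vertex and edge group computations are correct. To reach the statement over $\Gamma$, you must then show the data outside $\Gamma'$ is inert. This is not what your last paragraph proposes to check. Three facts are needed.

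(a) $G(v)$ acts faithfully on $X'_{v}$ for $v\in V\Gamma'$. A non-trivial kernel element, extended by Lemma~\ref{lem:universal_extension} with identity local actions elsewhere on $T'$, would give a non-trivial element of $K$.

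(b) $G(w)=\{\id\}$ for $w\in V\Gamma\setminus V\Gamma'$. A non-trivial element fixes the inward colour. It therefore yields non-trivial elements supported on outward branches at lifts of $w$ lying in hanging components disjoint from any given finite set, contradicting discreteness.

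(c) $\Gamma\setminus\Gamma'$ is a union of trees, each attached to $\Gamma'$ along a single edge. Hence it contains no self-reverse loops and does not change the rank.

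Alternatively, you may keep the graph of groups over all of $\Gamma$. Then you must observe that on each hanging tree the edge group equals the outer vertex group ($G_{\tilde w}=G_{\tilde a}$ for the inward arc $\tilde a$), so these pieces collapse in the fundamental group. Claiming that all edge groups are trivial is wrong.
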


        Free products of finitely many finite groups and a free group of finite rank are called \emph{plain} and form an active area of research in combinatorial and geometric group theory, see for example \cite{EP22}. In the context of uniscalar $(P)$-closed groups, retaining the notation introduced earlier, we see that
        \begin{displaymath}
                G/K \cong \mathbf{U}_{\mathbf{T'}}(\Delta') \cong \bigast_{v \in V\Gamma'}G(v)^{X_{v}'} \ast \bigast_{a\in L}\mathbb{Z}/2\mathbb{Z} \ast F_r, 
        \end{displaymath}
        where $L$ is the set of all self-reverse loops of the cotree $\Gamma'$ and $F_r$ is the free group of rank $r$, where $r \in \mathbb{N}_{0} \cup \{\infty\}$ is the topological rank of the cotree $\Gamma'$ after removing~$L$.

        Next, we comment on whether the extension $\smash{\xymatrix@C=0.5cm{\! K \ar@{ >->}[r] & G \ar@{->>}[r] & G/K}}$ splits. In the (\emph{Lineal}) case, $G/K\cong\mathbb{Z}$ and hence the extension splits. The following example shows that it need not split in the (\emph{General}) case.
		
		\begin{example}
		    Let $d\in\mathbb{N}_{\ge 3}$. Consider the group $(\mathbb{Z}/d^{2}\mathbb{Z},+)$, along with the natural projection $\mathbb{Z}/d^{2}\mathbb{Z}\twoheadrightarrow\mathbb{Z}/d\mathbb{Z}$, and the action of $\mathbb{Z}/d^{2}\mathbb{Z}$ on $\mathbb{Z}/d^{2}\mathbb{Z}\sqcup\mathbb{Z}/d\mathbb{Z}$ given by addition and addition after projection respectively. Then the local action diagram of Figure~\ref{fig:non_split_general_type} gives rise to a uniscalar $(P)$-closed group $G$ of (\emph{General}) type.

		\begin{figure}[ht]
			\begin{tikzpicture}
				\begin{scope}
					[decoration={markings,
						mark=at position 0.52 with {\arrow{>}},
						mark=at position 0.5 with {\node[above]{\small{$\mathbb{Z}/d^{2}\mathbb{Z}$}};}},
					line width=0.6pt]
					\node (0) at (0,0) {};
					\draw [fill] (0) circle [radius=1.5pt];
					\node [below=0.15cm] at (0) {$\mathbb{Z}/d^{2}\mathbb{Z}$}; 
					\draw (0,0) .. controls (-1.5,1) and (-1.5,-1) .. (0,0);
					\node [left=0.1cm] at (-1,0) {\small{$\mathbb{Z}/d\mathbb{Z}$}};
					
					\node (1) at (2,0) {};
					\draw [fill] (1) circle [radius=1.5pt];
					\draw [postaction=decorate] (0,0) .. controls (0.75,0.25) and (1.25,0.25) .. (2,0);
					\node [right=0.1] at (2,0) {$\{\id\}$};
				\end{scope}
				\begin{scope}
					[decoration={markings,
						mark=at position 0.52 with {\arrow{<}},
						mark=at position 0.5 with {\node[below=0.1]{\small{$\{1\}$}};}},
					line width=0.6pt]
					\draw [postaction=decorate] (0,0) .. controls (0.75,-0.25) and (1.25,-0.25) .. (2,0);
				\end{scope}
			\end{tikzpicture}
            \vspace{-0.3cm}
			\caption{A uniscalar $(P)$-closed group of (\emph{General}) type.}
			\label{fig:non_split_general_type}
		\end{figure}
		
		\noindent
		The associated $\Delta$-tree $\mathbf{T}=(T,\pi,\mathcal{L})$ consists of a $G$-invariant $d$-regular subtree $T'$ with a bouquet of $d^{2}$ edges attached to each vertex, and $K:=G_{T'}\cong\prod_{v\in VT}\mathbb{Z}/d\mathbb{Z}$, shifting each edge bouquet individually by a multiple of $d$ steps. The restricted action $\smash{G^{T'}}\cong G/G_{T'}=G/K$ is the universal group $\mathbf{U}_{\mathbf{T}'}(\mathbb{Z}/d\mathbb{Z})$ for the natural action of $\mathbb{Z}/d\mathbb{Z}$ on itself by addition. Now suppose that $\psi:\mathbf{U}_{\mathbf{T'}}(\mathbb{Z}/d\mathbb{Z})\cong G/K\to G$ is an injective homomorphism. We will show that $\mathrm{pr}\circ\psi\neq\id$.
		
		Let $v\in VT'$ and consider $H:=\mathbf{U}_{T'}(\mathbb{Z}/d\mathbb{Z})_{v}\lesssim G/K$. Then $\mathbb{Z}/d\mathbb{Z}\cong H\cong \psi(H)$. In particular, $\psi(H)\le G$ is finite and thus either fixes a vertex or preserves an edge.
		
		Suppose $\psi(H)\le G_{u}$ for some $u\in VT$. If $u\in VT'$ then since $|\psi(h)|$ divides $d$ for all $h\in H$, so does $|\sigma_{\mathcal{L}}(\psi(h),u)|$ and hence $\sigma_{\mathcal{L}}(\psi(H),u)\in d\mathbb{Z}/d^{2}\mathbb{Z}$. By semiregularity, $\psi(H)$ fixes $T'$ and thus $\mathrm{pr}\circ\psi(H)=\{\id\}$. In particular, $\mathrm{pr}\circ\psi\neq\id$. If $u\in VT\backslash VT'$ then $\psi(H)$ also fixes the unique vertex $w\in VT'$ with $(u,w)\in AT$.
		
		Now suppose $\psi(H)\le G_{\{a,\overline{a}\}}$ for some arc $a\in AT$. By the above, we may assume that $o(a),t(a)\in VT'$. Put $H_{2}:=\{h^{2}\mid h\in H\}\le H$. Then $\psi(H_{2})\le G_{a}$ fixes $T'$ by semiregularity. Hence $\mathrm{pr}\circ\psi(H_{2})=\{\id\}$. But $H_{2}\cong 2\mathbb{Z}/d\mathbb{Z}\lesssim H\cong\mathbb{Z}/d\mathbb{Z}$ is non-trivial when $d\ge 3$. We conclude again that $\mathrm{pr}\circ\psi\neq\id$ and therefore we see that the extension $\smash{\xymatrix@C=0.5cm{\! K \ar@{ >->}[r] & G \ar@{->>}[r] & G/K}}$ does not split. 
		\end{example}

    At the 2023 workshop ``Totally disconnected locally compact groups: local to global'' held at Universit{\"a}t M{\"u}nster, Thomas Weigel asked for classes of t.d.l.c. groups in which uniscalarity implies being compact-by-discrete, see \cite{NPR+23}. Theorem~\ref{thm:uniscalar_structure} provides one possible answer: the class of non-horocyclic $(P)$-closed groups acting on trees with compact vertex stabilisers. It is unclear to the authors whether the attribute of being $(P)$-closed may be dropped from these assumptions.

    \begin{question}
        Is there a uniscalar group acting on a tree with compact vertex stabilisers of type (\emph{General}) that does not contain a compact open normal subgroup?
    \end{question}

    Finally, we remark that being uniscalar is not a locally determined global property in the sense of Reid--Smith \cite[Section 8]{RS26}. For example, given a permutation group $F\le S_{d}$, the group $\mathrm{U}_{2}(\Gamma(F))\le\Aut(T_{d})$ introduced in \cite[Section 4.4.1]{Tor23} is discrete, and hence uniscalar, whereas its $(P)$-closure, the Burger--Mozes group $\mathrm{U}_{2}(\Gamma(F))^{(P)}=\mathrm{U}_{1}(F)$ is uniscalar if and only if $F$ is semiregular, see \cite[Proposition~6.20]{GGT18} or Proposition~\ref{prop:general_type_uniscalar}. However, a group whose $(P)$-closure is uniscalar must itself be uniscalar.

    \begin{proposition}
        Let $T$ be a tree and $G\le\Aut(T)$ have compact vertex stabilisers. If $\smash{G^{(P)}}$ is uniscalar then so is $G$, and $G$ is either of type (\emph{Horocyclic}) or contains a compact open normal subgroup.
    \end{proposition}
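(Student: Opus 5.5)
The plan has two parts. First we show that uniscalarity passes from $G^{(P)}$ down to $G$. Then we transfer the structural dichotomy of Theorem~\ref{thm:uniscalar_structure} from $G^{(P)}$ to $G$.

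\emph{Uniscalarity of $G$.} Put $H:=G^{(P)}$. Since $H$ and $G$ have the same local action diagram, $H$ again has compact vertex stabilisers, so $H=\mathbf{U}_{\mathbf{T}}(\Delta)$ for a local action diagram $\Delta$ with finite orbits.

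By Remark~\ref{rem:scale_why_translations}, elements of $G$ fixing a vertex or inverting an edge have scale $1$ in $G$. So it suffices to treat a translation $g\in G$ along an axis $L$, with $v\in VL$.

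By Lemma~\ref{lem:translation_tidy_subgroup}, the group $H_{v,gv}$ is tidy for $g$ in $H$. Hence $s_{H}(g)=|H_{gv,g^{2}v}\cdot v|$, and by assumption this equals $1$. In particular $H_{gv,g^{2}v}$ fixes $v$.

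Since $G_{gv,g^{2}v}\le H_{gv,g^{2}v}$, the orbit $G_{gv,g^{2}v}\cdot v$ is a single point as well. Therefore $U:=G_{v,gv}$ satisfies
\[
gUg^{-1}=G_{gv,g^{2}v}\le G_{v}\cap G_{gv}=U .
\]
This gives $[gUg^{-1}:gUg^{-1}\cap U]=1$. As the scale is a minimum over compact open subgroups, $s_{G}(g)=1$. Note that we do not even need $U$ to be tidy in $G$ here.

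\emph{Structure.} Apply Theorem~\ref{thm:uniscalar_structure} to $H$: either $H$ is of type (\emph{Horocyclic}), or $H$ contains a compact open normal subgroup. We use that the action type is determined by the local action diagram \cite[Theorem 2.6]{CT24}, so $G$ and $H$ have the same type. More concretely, type is determined by which ends and vertices are fixed and whether translations exist, and the axes of $G$ and $H$ correspond through Theorem~\ref{thm:axes_circuits_correspondence} applied to the common diagram.

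If $H$ is horocyclic, then so is $G$, and we are done. Otherwise, the proof of Theorem~\ref{thm:uniscalar_structure} gives an explicit normal subgroup $K$ of $H$, depending on the type:
\begin{itemize}
\item $K=H$ in the (\emph{Fixed Vertex}) and (\emph{Inversion}) cases;
\item $K=H_{L}$ for the invariant line $L$ in the (\emph{Lineal}) case;
\item $K=H_{T'}$ for the minimal invariant subtree $T'$ in the (\emph{General}) case.
\end{itemize}
The (\emph{Focal}) type is excluded by uniscalarity, via Lemma~\ref{lem:type_uniscalar}.

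Each of $L$ and $T'$ is also the corresponding $G$-invariant object, since $G$ and $H$ have the same quotient graph and the same invariant subtrees. So we take $K_{G}:=K\cap G$, that is, $G$, $G_{L}$ or $G_{T'}$ respectively. This group is normal in $G$ because the subtree is $G$-invariant. It is compact because it is closed in a compact stabiliser.

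It is open because $K$ equals an arc stabiliser $H_{a}$ with $a\in AT'$ (respectively $a\in AL$). Then $K_{G}=G\cap H_{a}=G_{a}$, which is open in $G$.

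\emph{Main obstacle.} The main point to check is that the (\emph{General})-type argument in Theorem~\ref{thm:uniscalar_structure} really yields $H_{T'}=H_{a}$ for $a\in AT'$, so that $G_{T'}=G_{a}$. This comes from the semiregularity in Proposition~\ref{prop:general_type_uniscalar}, which is a property of $H$. Intersecting with $G$ preserves the equality $G_{T'}=G\cap H_{T'}=G\cap H_{a}=G_{a}$, so it should go through. The other point requiring care is the type agreement between $G$ and $G^{(P)}$, which I would justify via the local action diagram.
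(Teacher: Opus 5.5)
Your proposal is correct. The structural part matches the paper: you intersect the compact open normal subgroup that Theorem~\ref{thm:uniscalar_structure} produces in $G^{(P)}$ with $G$. The first assertion, however, takes a genuinely different route.

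\textbf{How the routes differ.} The paper transfers the type from $G^{(P)}$ to $G$ via \cite[Proposition 2.4]{CT24}. It then gets uniscalarity of $G$ from Lemma~\ref{lem:type_uniscalar} for every type except (\emph{General}). In the (\emph{General}) case it obtains uniscalarity only as a consequence of the compact open normal subgroup $G_{T'}=G^{(P)}_{T'}\cap G$.

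You instead compare scales directly. Since $H_{v,gv}$ is tidy, hence minimising, for $g$ in $H$, and $s_{H}(g)=1$, you get $gH_{v,gv}g^{-1}\le H_{v,gv}$. Intersecting with $G$ gives $gG_{v,gv}g^{-1}\le G_{v,gv}$. This is an explicit instance of the monotonicity $s_{G}(g)\le s_{H}(g)$ for a closed subgroup $G\le H$. It treats all types uniformly and needs neither the type classification nor Lemma~\ref{lem:type_uniscalar}. The paper's route is shorter given Theorem~\ref{thm:uniscalar_structure} and gets uniscalarity for free from the structure.

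\textbf{A faulty justification.} Your argument for type agreement between $G$ and $H$ does not hold as written. Theorem~\ref{thm:axes_circuits_correspondence} and \cite[Theorem 2.6]{CT24} concern only $(P)$-closed groups $\mathbf{U}_{\mathbf{T}}(\Delta)$. A non-$(P)$-closed $G$ with the same local action diagram can have far fewer axes than $H$, so ``the axes of $G$ and $H$ correspond'' is not available. The fact itself is true, and it is exactly what the paper cites, namely \cite[Proposition 2.4]{CT24}. In your setup you can dispense with it altogether:
\begin{enumerate}[(a)]
\item Any compact open normal $K\trianglelefteq H$ yields a compact open normal subgroup $K\cap G\trianglelefteq G$, with no reference to $L$ or $T'$. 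For compactness, note that $G$ is closed in $\Aut(T)$ because it is locally compact.
\item If $H$ is horocyclic, then $G\le H$ fixes the same end and has no translations. Since $G$ and $H$ have the same vertex and arc orbits, $G$ fixes a vertex or stabilises an edge if and only if $H$ does. Hence $G$ is horocyclic as well.
\end{enumerate}
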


    \begin{proof}
        Since $G$ and $G^{(P)}$ have the same type by \cite[Proposition 2.4]{CT24}\todo{update reference when known}, the first statement follows from Lemma~\ref{lem:type_uniscalar} except for type (\emph{General}). The second statement is immediate for types (\emph{Fixed Vertex}), (\emph{Inversion}) and (\emph{Focal}). In the (\emph{Lineal)} case, the stabiliser of the line connecting the two fixed ends is compact open normal.

        Finally, suppose that $G$ and $\smash{G^{(P)}}$ are of type (\emph{General}) and $G^{(P)}$ is uniscalar. Let $T'$ be the unique smallest subtree of $T$ that is $\smash{G^{(P)}}$-invariant. Then $\smash{G^{(P)}_{T'}}$ is a compact open normal subgroup of $G^{(P)}$ by the proof of Theorem~\ref{thm:uniscalar_structure}. Since $G\le G^{(P)}$ and the subspace topology on $G$ coincides with the permutation topology, the subgroup $\smash{G_{T'}=G_{T'}^{(P)}\cap G}$ is a compact open normal subgroup of $G$. Hence $G$ is uniscalar.
    \end{proof}

	\bibliographystyle{amsalpha}
	\bibliography{scale}

\end{document}